\newtheorem{theorem}{Theorem}
\newtheorem{lemma}{Lemma}
\newtheorem{corollary}{Corollary}
\newtheorem{conjecture}{Conjecture}
\newtheorem{defn}{Definition}
\theoremstyle{definition}
\newtheorem{remark}{Remark}
\newtheorem{ass}{Assumption}
\newcommand{\TT}{\mathcal{T}}
\numberwithin{theorem}{section}
\numberwithin{remark}{section}
\numberwithin{ex}{section}
\numberwithin{ass}{section}
\numberwithin{defn}{section}
\numberwithin{lemma}{section}
\numberwithin{corollary}{section}
\numberwithin{prop}{section}
\numberwithin{conjecture}{section}
\newcommand{\argmax}{\operatorname{argmax}}
\newcommand{\argmin}{\operatorname{argmin}}
\newcommand{\bx}{\mathbf{x}}
\newcommand{\bA}{\mathbf{A}}
\newcommand{\by}{\mathbf{y}}
\newcommand{\ee}{\mathbb{E}}
\newcommand{\rr}{\mathbb{R}}
\newtheorem{definition}{Definition}
\newcommand{\bzero}{ {\bf 0} }
\newcommand{\bG}{ { G} }
\newcommand{\bh}{ \mathbf{h} }
\newcommand{\bm}{ \mathbf{m} }
\newcommand{\bI}{ { I} }
\newcommand{\bw}{\mathbf{w}}
\newcommand{\bB}{ \mathbf{B} }
\def \bz{\mathbf{z}}
\def \bX{\mathbf{X}}
\def \bY{\mathbf{Y}}
\def \bu{\mathbf{u}}
\def \bv{\mathbf{v}}
\def \bt{\mathbf{t}}
\def \bE{\mathbb{E}}
\def \bT{\mathbf{T}}
\def \btheta {\boldsymbol{\theta}}
\newcommand{\bP}{\mathbb{P}}
\newcommand{\independent}{\perp\!\!\!\!\perp}
\def\thickhline{%
	\noalign{\ifnum0=`}\fi\hrule \@height \thickarrayrulewidth \futurelet
	\reserved@a\@xthickhline}
\def\@xthickhline{\ifx\reserved@a\thickhline
	\vskip\doublerulesep
	\vskip-\thickarrayrulewidth
	\fi
	\ifnum0=`{\fi}}
\newlength{\thickarrayrulewidth}
\numberwithin{equation}{section}
\DeclareMathOperator{\sech}{sech}
\begin{document}
\title{Causal inference under interference: computational barriers and algorithmic solutions}
\date{}

\author[1]{Sohom Bhattacharya\thanks{bhattacharya.s@ufl.edu}}
\author[2]{Subhabrata Sen \thanks{subhabratasen@fas.harvard.edu}}
\affil[1]{Department of Statistics, University of Florida}
\affil[2]{Department of Statistics, Harvard University}
\makeatletter
\renewcommand\AB@affilsepx{\\ \protect\Affilfont}
\renewcommand\Authsep{, }
\renewcommand\Authand{, }
\renewcommand\Authands{, }
\makeatother

\maketitle

\begin{abstract}
We study causal effect estimation under interference from network data. We work under the chain-graph formulation pioneered in \cite{tchetgen2021auto}. Our first result shows that polynomial time evaluation of treatment effects is computationally hard in this framework without additional assumptions on the underlying chain graph. Subsequently, we assume that the interactions among the study units are governed either by (i) a dense graph or (ii) an i.i.d. Gaussian matrix. In each case, we show that the treatment effects have well-defined limits as the population size diverges to infinity. Additionally, we develop polynomial time algorithms to consistently evaluate the treatment effects in each case. Finally, we estimate the unknown parameters from the observed data using maximum pseudo-likelihood estimates, and establish the stability of our causal effect estimators under this perturbation. Our algorithms provably approximate the causal effects in polynomial time even in low-temperature regimes where the canonical MCMC samplers are slow mixing. For dense graphs, our results use the notion of regularity partitions; for Gaussian interactions, our approach uses ideas from spin glass theory and Approximate Message Passing.

\end{abstract}

\section{Introduction}
The learning of causal effects from observational data is critical in modern data science. Traditional methods for causal inference are developed under the \emph{no interference} assumption. This assumption is often violated in diverse modern applications e.g. social networks~\cite{ogburn2014causal}, epidemiology~\cite{reich2021review}, public policy~\cite{matthay2022causal} etc.

The inference of causal relations under interference has received significant attention in the recent literature. One prominent approach in this context, pioneered by~\cite{liu2025auto,tchetgen2021auto} is based on the chain graph formalism of~\cite{lauritzen2002chain}. Although this formalism provides an elegant framework to study causal inference under interference, the evaluation of causal effects within this framework presents several algorithmic challenges, which are currently unresolved. In this article, we focus on the following questions: (i) When is \emph{computationally efficient} evaluation of causal effects possible under the chain-graph framework? (ii) What are the appropriate algorithms to estimate causal effects within this setup?

Formally,  we work under the Neyman-Rubin potential outcomes framework with binary treatments. Let $n$ denote the number of study units. Denote any treatment assignment as $\mathbf{t} \in \{\pm 1\}^n$. We denote the potential outcomes as $\{ \mathbf{Y}_i ( \mathbf{t}) : \mathbf{t} \in \{\pm 1\}^n\}$. Next, we introduce the causal estimands of interest. Specifically, we study the \emph{direct} and the \emph{indirect/spillover} effect of the assigned treatments on the outcomes. 
To this end, we first introduce the average direct causal effect for unit $i$ upon
changing the unit’s treatment status from $t_i=-1$ to $t_i=1$:
\begin{equation}\label{eq:define_de_i}
    \text{DE}_{i}(\mathbf{t}_{-i}) := \mathbb{E}[\mathbf{Y}_i(1, \bt_{-i})] - \mathbb{E}[\mathbf{Y}_i (-1, \bt_{-i})]
\end{equation}
where $(c, \mathbf{t}_{-i})$, $c \in \{\pm 1\}$, denotes the binary vector where the $i^{th}$ entry is $c$ and the remaining entries are specified by $\mathbf{t}_{-i}$. 
Note that the direct effect $\mathrm{DE}_i(\cdot)$ is dependent on the treatment assignments of the other units $\mathbf{t}_i \in \{\pm 1\}^{n-1}$. To define an averaged direct effect, following \cite{hudgens2008toward,tchetgen2012causal,tchetgen2021auto}, we average these effects over a hypothetical allocation probability measure $\pi$ on $\{\pm 1\}^{n-1}$:
\begin{equation}\label{eq:define_de}
    \text{DE}(\pi)=\frac{1}{n}\sum_{i=1}^{n}\sum_{\mathbf{t}_{-i}\in \{\pm 1\}^{n-1}} \pi(\mathbf{t}_{-i}) \text{DE}_i(\mathbf{t}_{-i}).
\end{equation}
Note that under the no interference setting i.e. if $\bY_i(\bt)= \bY_i(t_i)$, the direct effect $\mathrm{DE}(\pi)$ reduces to the traditional average treatment effect. 
Next, we define the average indirect or spillover causal effect experienced by unit $i$ if the unit’s treatment is set to be inactive, while changing the treatment of other units from inactive to $\mathbf{t}_{-i}$ :
\begin{equation}\label{eq:define_ie_i}
    \text{IE}_{i}(\mathbf{t}_{-i}) := \mathbb{E}[\bY_i(-1, \bt_{-i})] - \mathbb{E}[\bY_i ( -\mathbf{1})]. 
\end{equation}
Similar to direct effect, we average over the allocation $\pi$ to obtain  
\begin{equation}\label{eq:define_ie}
    \text{IE}(\pi)=\frac{1}{n}\sum_{i=1}^{n}\sum_{\mathbf{t}_{-i}\in \{\pm 1\}^{n-1}} \pi(\mathbf{t}_{-i}) \text{IE}_i(\mathbf{t}_{-i}).
\end{equation}
Observe that under no interference, i.e. if $\bY_i(\bt)= \bY_i(t_i)$, the indirect effect $\mathrm{IE}(\pi)=0$. 
In our subsequent discussion, we assume that the allocation measure $\pi$ appearing in \eqref{eq:define_de} and \eqref{eq:define_ie} is, in fact, the uniform distribution on $\{\pm 1\}^{n-1}$ i.e.,  $\pi(\mathbf{t}_{-i})=2^{-(n-1)}$ for all $\mathbf{t}_{-i} \in \{\pm 1\}^{n-1}$. Our arguments extend in a straightforward manner to any i.i.d. measure on $\{\pm 1\}^{n-1}$ (See \cite[Remark 1.1]{bhattacharya2024causal}).   For notational simplicity, we suppress the dependence on $\pi$, and  write $\text{DE}$ and $\text{IE}$ in our subsequent discussion.

We observe data $\{(Y_i, T_i, \mathbf{X}_i): 1\leq i \leq n\}$, where $Y_i \in \mathbb{R}$ denotes the observed response, $T_i \in \{\pm 1\} $ represents the assigned treatment and $\mathbf{X}_i \in [-1,1]^d$ represents the observed covariates for the $i^{th}$ unit. Set $\mathbf{Y} = (Y_1, \cdots, Y_n) \in \mathbb{R}^n$, $\mathbf{T} = (T_1,\cdots,T_n) \in \{\pm 1\}^n$ and $\mathbf{X}^{\top} = (\mathbf{X}_1, \cdots, \mathbf{X}_n) \in \mathbb{R}^{d \times n}$. To estimate the causal effects, we need to relate the observed data with the potential outcomes. To this end, we will work under the following standard assumptions: 
\begin{itemize}
    \item[(i)] Consistency--- We assume that $\mathbf{Y}(\mathbf{T}) = \mathbf{Y}$ -- this is the network version of the traditional consistency condition. 
    \item[(ii)] No unmeasured confounding---For identifiability of the causal effect, we assume 
\begin{align*}
    \mathbf{T} \independent \mathbf{Y}(\mathbf{t}) | \mathbf{X} \,\, \mathrm{for\, all } \,\, \mathbf{t} \in \{\pm 1\}^n. 
\end{align*}
This reduces to the traditional no unmeasured confounding assumption in the absence of interference. 
\item[(iii)] Positivity---Finally, we assume $\mathbb{P}[\mathbf{T} = \mathbf{t} | \mathbf{X}]\geq \sigma_n >0$ for some $\sigma_n>0$. This is the appropriate analogue of the traditional positivity assumption in our setting. 
\end{itemize} 
Under these assumptions a network version of Robins's g-formula implies 
\begin{align}
\label{eq:g_computation1}
    \text{DE}_{i}(\mathbf{t}_{-i}) := \mathbb{E}_{\mathbf{X}} \big[\mathbb{E}[Y_i| \mathbf{T}= (1,\mathbf{t}_{-i}),\mathbf{X}]-\mathbb{E}[Y_i|\mathbf{T} = (-1,\mathbf{t}_{-i}),\mathbf{X}]\big],  
\end{align}
and thus $\text{DE}_{i}(\mathbf{t}_{-i})$ can be expressed as a function of the observed data law. Similarly, we have, 
\begin{align}
\label{eq:g_computation2}
    \text{IE}_{i}(\mathbf{t}_{-i}) := \mathbb{E}_\mathbf{X} \big[\mathbb{E}[\mathbf{Y}_i|\mathbf{T} = (-1, \mathbf{t}_{-i}), \mathbf{X}]-\mathbb{E}[\mathbf{Y}_i| \mathbf{T}= - \mathbf{1},\mathbf{X}]\big].  
\end{align}
Thus the indirect effect $\text{IE}_{i}(\mathbf{t}_{-i})$ can also be expressed as a functional of the observed data law. 
 By linearity, we obtain that the causal estimands $\mathrm{DE}$ and $\mathrm{IE}$ are functions of the observed data law. However, to ensure identifiability of these causal estimands, one needs additional structure on the observed data law (we refer the interested reader to 
\cite[Section 2.2]{tchetgen2021auto} for an in-depth discussion of this point).

Here we follow the Markov Random Field  (MRF) based framework introduced in \cite{tchetgen2021auto} and subsequently explored by \cite{bhattacharya2020causal,sherman2018identification}. Throughout, we assume that $\mathbf{Y} \in \{-1,1\}^n$---this reduces the notational overhead, and simplifies some key technical arguments in our analysis. Our techniques extend naturally to bounded $\mathbf{Y}$; we refer to the discussion in Section~\ref{sec:discussions} for additional details.  Given covariates $\mathbf{x}^{\top} = (\mathbf{x}_1, \cdots, \mathbf{x}_n)$, $\mathbf{x}_i \in [-1,1]^d$ and a treatment assignment $\mathbf{t} \in \{\pm 1\}^n$, the observed outcome $\bY \in \{\pm 1\}^n$ is given by the joint density 
 \begin{equation}\label{eq:define_gibbs}
    f(\mathbf{y}|\mathbf{t},\mathbf{x}) = \frac{1}{Z_n(\mathbf{t},\mathbf{x})} \exp\Big(\frac{1}{2}\,\mathbf{y}^\top \mathbf{A}_n \mathbf{y}+ \mathbf{y}^\top (\tau_0 \mathbf{t}+ \mathbf{x} \btheta_0) \Big),
\end{equation}
where 
\begin{equation}\label{eq:define_Z}
    Z_n(\mathbf{t},\mathbf{x}) = \sum\limits_{\by \in \{\pm 1\}^n}\exp\Big(\frac{1}{2}\,\mathbf{y}^\top \mathbf{A}_n \mathbf{y}+ \mathbf{y}^\top (\tau_0 \mathbf{t}+  \mathbf{x} \btheta_0) \Big)
\end{equation}
is the normalizing constant. The matrix 
$\mathbf{A}_n = \bA_n^{\top} \in \mathbb{R}^{n \times n}$ captures the interaction among units which is assumed known throughout and $\tau_0\in \mathbb{R}$ , $\btheta_0 \in \rr^d$ represent unknown parameters.  In social network applications, the matrix $\mathbf{A}_n$ is usually a scaled version of the adjacency matrix of the observed network. Throughout, we make the following assumptions on the parameter space. 

\begin{ass}[Parameter space]\label{assn:parameter_space}
    $(\tau_0,\btheta_0)\in [-B_0,B_0]\times [-M_0,M_0]^d$ for some $B_0,M_0>0$.
\end{ass}
Given covariates $\mathbf{x} = (\mathbf{x}_1, \cdots, \mathbf{x}_n)$, 
 the treatment assignments $\mathbf{T} = (\mathbf{T}_1, \cdots, \mathbf{T}_n) \in \{ \pm 1\}^n$ follow a propensity score model 
\begin{equation}\label{eq:prop_score}
    \bP(\mathbf{T} = \mathbf{t} |\mathbf{x})= \frac{1}{Z_n'(\mathbf{x})} \exp \Big(\frac{1}{2}\mathbf{t}^\top \mathbf{M}_n \mathbf{t} + \sum_{i=1}^n t_i \mathbf{x}^\top_i \boldsymbol{\gamma}_0 \Big),
\end{equation}
with $\boldsymbol{\gamma_0} \in [-M_0,M_0]^d$ for $M_0>0$. $Z_n'(\mathbf{x})$ refers to the normalization constant in the above model. We assume that the interaction matrix $\mathbf{M}_n = \mathbf{M}_n^{\top}$ is known throughout, and the propensity score model is known up to the parameter $\boldsymbol{\gamma}_0$. Note that we do not necessarily assume that $\mathbf{A}_n = \mathbf{M}_n$. 

Finally, we assume that the observed covariates $\bX_i \sim \mathbb{P}_X$ are i.i.d., where $\mathbb{P}_X$ is a probability distribution supported on $[-1,1]^d$. Assume that $\text{Var}(\bX_i)=\boldsymbol{\Sigma}$, where $\boldsymbol{\Sigma}$ is a $d \times d$ matrix with 
\begin{align}\label{eq:sigma_min_eval}
    \lambda_{\min}(\boldsymbol{\Sigma}) \ge c>0
\end{align}
for some $c>0$. 

Given the outcome regression model \eqref{eq:define_gibbs} and the g-computation formulae \eqref{eq:g_computation1}, \eqref{eq:g_computation2}, the natural algorithm to evaluate the causal effects $\mathrm{DE}$ and $\mathrm{IE}$ would involve sampling from the MRF \eqref{eq:define_gibbs}. In the seminal work~\cite{tchetgen2021auto} which introduced this formulation, the authors implement this sampling based strategy, and use an appropriate Gibbs sampler for \eqref{eq:define_gibbs}. This algorithm is \emph{universal} in that one can use the same algorithm irrespective of the precise details of the interaction matrix $\bA_n$ \eqref{eq:define_gibbs}. Unfortunately, it is well-known that MCMC algorithms might often be slow mixing in MRFs of the form \eqref{eq:define_gibbs}~\cite{levin2017markov}. In the most extreme case, the mixing time for common MCMC algorithms (initialized from an arbitrary starting state) scales as $\exp(\Theta(n))$. Consequently, this sampling based strategy for causal effect evaluation is ineffective as $n \to \infty$. In prior work~\cite{bhattacharya2024causal}, the authors developed fast iterative algorithms for causal effect estimation using mean-field algorithms. However, these algorithms assume that the outcome model \eqref{eq:define_gibbs} is at \emph{high temperature}; formally, we reparametrize $\bA_n = \beta \bG_n$ for some $\beta>0$ and a sequence of `standardized' interaction matrices $\bG_n$ (for examples see Section~\ref{sec:examples}). The parameter $\beta>0$ is referred to as the inverse temperature in statistical physics, and the high-temperature regime corresponds to $\beta>0$ being small. At high-temperature, the correlations in the MRF \eqref{eq:define_gibbs} are relatively weak, and one expects MCMC algorithms to also be fast-mixing. Thus although the prior mean-field algorithms provide practical speedup over sampling-based algorithms, both strategies rely crucially on weak-dependence in the outcome regression model \eqref{eq:define_gibbs}. This prompts the natural question:
\begin{center}
    \emph{Is efficient estimation of causal effects possible beyond high-temperature?}
\end{center}

\noindent 
\textbf{Our contributions:} In this work, we investigate causal effect estimation in the setup described above, focusing specifically on the low-temperature regime. 
\begin{itemize}
    \item[(i)] \textbf{Computational hardness:} Our first result (Theorem~\ref{thm:estimation_hard}) provides formal evidence against the existence of \emph{universal} algorithms for evaluation of  causal effects. Specifically, we show if there exists a polynomial time (in $n$) algorithm $\mathcal{A}$ which computes the direct effect for any interaction $\bA_n$, then there exists a sequence of polynomial time hypothesis tests for detecting a negatively spiked Wishart distribution~\cite{bandeira2020computational}. This problem is believed to exhibit average case computational hardness and thus provides rigorous evidence to the non-existence of universal algorithms for causal effect estimation. To the best of our knowledge, this is the first result exhibiting  computational hardness for causal inference under interference. Additionally, we note that there is substantial recent evidence for the existence of computational barriers in statistical models with high-dimensional parameters e.g. regression models~\cite{celentano2022fundamental}, community detection~\cite{hopkins2018statistical}, low-rank matrix estimation~\cite{berthet2013complexity} etc. In sharp contrast, we discover a computational bottleneck in the evaluation of low-dimensional treatment effect functionals; in our setting, the hardness arises due to the dependency in the model \eqref{eq:define_gibbs}, and not due to the high-dimensionality of the parameter space. 

    \item[(ii)] \textbf{Dense graphs:} The main takeaway from our first result is that to evaluate the treatment effects, particularly at low-temperature, one must utilize additional features in the interaction matrix $\bA_n$. In our second result, we assume that the interaction matrix $\bA_n$ corresponds to the (scaled) adjacency matrix of a sequence of dense graphs. Under this assumption, we utilize the algorithmic regularity lemma~\cite{fox2019fast} from combinatorics to develop polynomial time algorithms for the direct and indirect causal effects. We also show that as $n \to \infty$, the causal effects converge to well-defined limits, which are determined by the graphon limit of the underlying graph sequence $\bA_n$. This limit provides a well-defined notion of a population causal effect under interference. To the best of our knowledge, this interaction between causal inference and graph limit theory appears for the first time in our work. 

    \item[(iii)] \textbf{Gaussian interactions:} Finally, we study the case when $\bA_n$ is a symmetric matrix with i.i.d. Gaussian entries above the diagonal. In this case, we use the Parisi formula for spin glasses to derive a notion of limiting causal effects. We also develop a new algorithm based on Approximate Message Passing (AMP) to estimate the treatment effects. In prior work~\cite{bhattacharya2024causal}, the authors developed an AMP based algorithm for causal effect estimation which worked at high-temperature. The new algorithm leverages the structure of the optimal Parisi measure for spin glasses, and works at any temperature. This extends the scope of AMP methods significantly beyond the prior art.   

    \item[(iv)] \textbf{Parameter estimation:} The algorithms introduced above require knowledge of the model parameters $\tau_0$ and $\btheta_0$ in \eqref{eq:define_gibbs}. These parameters need to estimated from data. We utilize maximum pseudo-likelihood to estimate the model parameters; these estimates are consistent for the true model parameters. We subsequently show that our proposed algorithms are stable under perturbations to the model parameters. This facilitates fully data-driven causal effect estimation for the models described above.  
\end{itemize}
We emphasize that in both the examples described above i.e., $\bA_n$ arising from a dense graph or an i.i.d. Gaussian matrix, the natural Gibbs sampler for \eqref{eq:define_gibbs} mixes in exponential time at low temperature. Our result shows that despite the slow mixing for natural MCMC algorithms, causal effects can still be estimated efficiently, given some a priori structural assumptions on the interaction matrix $\bA_n$. We consider this to be one of the major conceptual contributions of this work.

\noindent 
\textbf{Prior work:}
There is growing interest in settings where treatments spill over from one unit to another~\cite{aronow2017estimating,athey2018exact,eckles2016design,hong2006evaluating,rosenbaum2007interference,sobel2006randomized,vanderweele2010direct}. Interference makes causal estimation intrinsically high-dimensional and thus most approaches impose structural constraints on the interference pattern. Early works relied on specific structural models~\cite{bramoulle2009identification,graham2008identifying,lee2007identification,manski1993identification} and are often criticized for their restrictive nature~\cite{angrist2014perils,goldsmith2013social}. The partial interference assumption, i.e., interference confined to known disjoint groups, offered a milder alternative~\cite{basse2019randomization,ferracci2014evidence,hayes2017cluster,hong2006evaluating,hudgens2008toward,kang2016peer,liu2014large,lundin2014estimation,park2023assumption,tchetgen2012causal}.  More recently, interference has been modeled through general networks using exposure mappings~\cite{aronow2017estimating,forastiere2021identification,jagadeesan2020designs,li2022random,manski2013identification,toulis2013estimation,ugander2013graph}, though typically under sparsity assumptions (e.g., bounded degree). In contrast, we study dense interference settings and study computational barriers in estimating the treatment effects. There is also an emerging line of work that goes beyond network-based interference either by imposing algebraic constraints on the interference structure (e.g. low-degree interference) \cite{cortez2023exploiting,eichhorn2024low} or by studying general interference \cite{yu2022estimating,viviano2025policy,choi2024new}. In the latter case, In these settings, prior work typically allows multiple interventions or focuses on alternative estimands which remain estimable under weaker assumptions.. In contrast, we focus on the computational barriers in estimating causal effects under interference.


We investigate treatment effect estimation from observational network data represented by a class of graphical models known as chain graphs~\cite{lauritzen2002chain,tchetgen2021auto, bhattacharya2020causal,sherman2018identification,shpitser2017modeling}. Existing approaches either use general purpose MCMC samplers or exploit weak interaction~\cite{bhattacharya2024causal} to estimate causal effects. 
Assuming $\mathbf{y} \in \{ \pm 1\}^n$, the outcome regression model \eqref{eq:define_gibbs} is closely connected to the Ising model from statistical physics. 
Sampling from the Ising model is a well-studied problem~\cite{sompolinsky1981dynamic}. It is well-established that the traditional Glauber dynamics or MCMC methods mix rapidly for sufficiently high temperature~\cite{adhikari2024spectral,anari2022entropic,anari2024trickle,eldan2022spectral}, with efficient approximate sampling also possible by some diffusion-based methods~\cite{el2022sampling,el2025sampling,huang2024sampling}. However, we focus on the low-temperature regime, where sampling from the Gibbs measure is provably hard~\cite{blanca2025tractability,galanis2024sampling,gheissari2022low,koehler2022sampling, sellke2025exponentially}.  Our first main result (Theorem~\ref{thm:estimation_hard}) provides evidence that evaluating causal effects by general purpose methods is also computationally hard in this regime.

We then identify two important classes of interaction matrices for which estimation remains tractable even at low temperature. First, we
consider dense graphs (Assumption~\ref{assn:interaction}), which includes regular graphs and block models~\cite{berthet2019exact}. Classical graph regularity results, notably Szemer\'edi's regularity lemma~\cite{szemeredi1975regular} and Frieze-Kannan regularity lemma~\cite{frieze1996regularity}, provides structural decomposition of dense graphs and and have become foundational tools across extremal combinatorics, additive number theory, and graph limits~\cite{komlos1995szemeredi,Lovasz2012}. Significant progress on algorithmic variants of regularity lemma~\cite{alon1994algorithmic, fox2019fast,frieze1999quick} now enables polynomial-time constructions of regular partitions and weak regularity approximations. These tools were recently used in~\cite{jain2018mean} to obtain $O(1)$ approximations of the log-partition function; here, we use regularity-based decompositions to design approximation algorithms for causal effects.

Beyond the mean-field regime, we study Gaussian interaction matrices and develop an AMP-based estimator for treatment effects. AMP methods have recently been applied in causal inference~\cite{bayati2024higher,bhattacharya2024causal,jiang2022new,shirani2023causal}, but our work is the first to formulate message-passing algorithms for causal effect estimation in parameter regimes where sampling is computationally infeasible. Our algorithm builds on recent advances in optimization for the Sherrington-Kirkpatrick models~\cite{el2021optimization,montanari2025optimization, sellke2024optimizing}. We exhibit how these ideas and tools are useful  in the context of estimation of causal effects under dense interference.

\noindent 
\textbf{Notation:} Given any $n \times n$, symmetric matrix $\mathbf{B}_n$, denote its operator norm by $\|\mathbf{B}_n\|$ and trace by $\text{Tr}(\mathbf{B}_n)$. Define its largest and smallest eigenvalues by $\lambda_{\max}(\mathbf{B}_n)$ and $\lambda_{\min}(\mathbf{B}_n)$ respectively. Denote by $\mathbf{I}_n$ the $n \times n$ identity matrix. Denote by $\mathbf{1}$ the $n$-length vector of all $\mathbf{1}$s. For $n \in \mathbb{N}$, define $[n]= \{1,2\ldots,n\}$. For two sequences of real numbers $a_n$ and $b_n$, $a_n= O(b_n)$ will denote that $\limsup_{n \rightarrow \infty} a_n/b_n= C$ for some $C \in [0,\infty)$, $a_n=o(b_n)$ will denote $\limsup_{n \rightarrow \infty} a_n/b_n=0$, and $a_n= \Theta(b_n)$ will denote $a_n= O(b_n)$ and $b_n=O(a_n)$ simultaneously. The $\ell^2$ and $\ell^{\infty}$ norms of $\mathbf{a}$ are denoted by $\|\mathbf{a}\|$ and $\|\mathbf{a}\|_\infty$, respectively. We use $\lesssim$ to denote an inequality up to a constant independent of $n$.

\noindent
\textbf{Structure:} The rest of the paper is structured as follows. We describe our main results in Section~\ref{sec:non-asymptotic}. We discuss some consequences of our results and some directions for future research in Section~\ref{sec:discussions}. Finally, we prove our results in Section~\ref{sec:proofs}. We defer some of our technical arguments to the Appendix.  

\noindent
\textbf{Acknowledgements:} SS thanks Mark Sellke for discussions on the performance of AMP at low temperature. SS
thankfully acknowledges support from NSF (DMS CAREER 2239234), ONR (N00014-23-1-2489)
and AFOSR (FA9950-23-1-0429).

\section{Our results}\label{sec:non-asymptotic}

Our starting point is the following expression for the causal effects derived in \cite[Lemma 1.1]{bhattacharya2024causal}.

\begin{lemma}\label{lem:de_define}
Set $\pi(\mathbf{t}_{-i})=2^{-(n-1)}$ for all $\mathbf{t}_i \in \{\pm 1\}^{n-1}$. Under the outcome model \eqref{eq:define_gibbs}, we have,  
\begin{align}\label{eq:simplify_de}
    &\mathrm{DE}=\frac{2}{n}\sum_{i=1}^{n} \bE_{\bar \bT, \bar \bX} \bE( \bar T_i \bY_i)=: \frac{2}{n} \bE_{\bar \bT, \bar\bX} \Big[\sum_{i=1}^{n}\bar T_i \langle \bY_i \rangle \Big], \nonumber \\
    &\mathrm{IE}:= \frac{1}{n} \bE_{\bar \bT, \bar\bX} \Big[\sum_{i=1}^{n} \langle \bY_i \rangle \Big]- \frac{1}{n} \bE_{-\mathbf{1}, \bar\bX} \Big[\sum_{i=1}^{n} \langle \bY_i \rangle \Big]- \frac{1}{2}\mathrm{DE},
\end{align}
where $\langle \bY_i \rangle := \langle \mathbf{Y}_i \rangle_{\mathbf{t},\bx}= \bE(\mathbf{Y}_i|\bt,\bx)$ and the expectation is taken with respect to the density \eqref{eq:define_gibbs}. Note that in \eqref{eq:simplify_de} above, $(\bar \bT,\bar \bX)$ are independent, $\bar \bT \sim \mathrm{Unif}(\{\pm 1 \}^n)$ and $\bar \bX= (\bar \bX_1, \cdots, \bar \bX_n)$, $\bar \bX_i \sim \mathbb{P}_{X}$ are i.i.d. 

\end{lemma}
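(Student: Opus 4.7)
The lemma is essentially a bookkeeping identity that rewrites averages over $\pi(\mathbf{t}_{-i})=2^{-(n-1)}$ as expectations over a treatment vector $\bar{\bT}$ uniform on $\{\pm 1\}^n$. The plan is to substitute the g-computation formulas \eqref{eq:g_computation1}--\eqref{eq:g_computation2} into the definitions \eqref{eq:define_de} and \eqref{eq:define_ie}, and then recognize the resulting sums as expectations with respect to $\bar{\bT}$.

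First I would handle $\mathrm{DE}$. Substituting \eqref{eq:g_computation1} into \eqref{eq:define_de} gives
\begin{equation*}
\mathrm{DE}=\frac{1}{n}\sum_{i=1}^n \bE_{\bar{\bX}}\Big[\frac{1}{2^{n-1}}\sum_{\bt_{-i}}\big(\bE[Y_i\mid \bT=(1,\bt_{-i}),\bar{\bX}]-\bE[Y_i\mid \bT=(-1,\bt_{-i}),\bar{\bX}]\big)\Big].
\end{equation*}
Writing $\langle \bY_i\rangle_{\bt,\bar{\bX}}=\bE[Y_i\mid \bT=\bt,\bar{\bX}]$ and splitting the expectation over $\bar{\bT}\sim\mathrm{Unif}(\{\pm 1\}^n)$ according to the value of $\bar{T}_i\in\{\pm 1\}$, one checks
\begin{equation*}
\bE_{\bar{\bT}}\big[\bar{T}_i\,\langle \bY_i\rangle_{\bar{\bT},\bar{\bX}}\big]=\frac{1}{2^n}\sum_{\bt_{-i}}\big(\langle \bY_i\rangle_{(1,\bt_{-i}),\bar{\bX}}-\langle \bY_i\rangle_{(-1,\bt_{-i}),\bar{\bX}}\big),
\end{equation*}
so the inner $\bt_{-i}$-sum in $\mathrm{DE}$ equals $2\,\bE_{\bar{\bT}}[\bar{T}_i\langle \bY_i\rangle_{\bar{\bT},\bar{\bX}}]$. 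Taking expectation over $\bar{\bX}$ and exchanging the sum and expectation yields the claimed expression $\mathrm{DE}=(2/n)\,\bE_{\bar{\bT},\bar{\bX}}[\sum_i \bar{T}_i\langle \bY_i\rangle]$.

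Next I would handle $\mathrm{IE}$ by the same device. Let
\begin{equation*}
A_i(\bar{\bX})=\frac{1}{2^{n-1}}\sum_{\bt_{-i}}\langle \bY_i\rangle_{(1,\bt_{-i}),\bar{\bX}},\qquad B_i(\bar{\bX})=\frac{1}{2^{n-1}}\sum_{\bt_{-i}}\langle \bY_i\rangle_{(-1,\bt_{-i}),\bar{\bX}}.
\end{equation*}
From the previous step, $A_i-B_i=2\,\bE_{\bar{\bT}}[\bar{T}_i\langle \bY_i\rangle\mid \bar{\bX}]$. A parallel calculation that does \emph{not} insert $\bar{T}_i$ gives $A_i+B_i=2\,\bE_{\bar{\bT}}[\langle \bY_i\rangle\mid \bar{\bX}]$. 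Subtracting,
\begin{equation*}
B_i(\bar{\bX})=\bE_{\bar{\bT}}[\langle \bY_i\rangle\mid \bar{\bX}]-\bE_{\bar{\bT}}[\bar{T}_i\langle \bY_i\rangle\mid \bar{\bX}].
\end{equation*}
Plugging this into the g-formula representation of $\mathrm{IE}_i(\bt_{-i})$ — namely $\bE_{\bar{\bX}}[B_i(\bar{\bX})]-\bE_{\bar{\bX}}[\langle \bY_i\rangle_{-\mathbf{1},\bar{\bX}}]$, averaged over $i$ — and using the $\mathrm{DE}$ identity to rewrite the $\bE_{\bar{\bT}}[\bar{T}_i\langle \bY_i\rangle]$ piece as $\mathrm{DE}/2$, produces the stated formula for $\mathrm{IE}$.

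There is no real obstacle: the entire argument is an interchange of sums and a splitting of the uniform measure on $\{\pm 1\}^n$ into its $\bar{T}_i=+1$ and $\bar{T}_i=-1$ halves. The only care needed is to make sure that (i) the g-computation identities \eqref{eq:g_computation1}--\eqref{eq:g_computation2} are applied before the replacement of $\pi$ by uniform averaging, and (ii) the baseline term $(1/n)\bE_{-\mathbf{1},\bar{\bX}}[\sum_i\langle \bY_i\rangle]$ in $\mathrm{IE}$ is \emph{not} averaged over $\bar{\bT}$, since the allocation there is the deterministic vector $-\mathbf{1}$.
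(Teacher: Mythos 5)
Your proof is correct: the splitting of the uniform measure on $\{\pm 1\}^n$ into the $\bar T_i=\pm 1$ halves gives exactly $A_i-B_i=2\,\bE_{\bar\bT}[\bar T_i\langle \bY_i\rangle\mid\bar\bX]$ and $A_i+B_i=2\,\bE_{\bar\bT}[\langle \bY_i\rangle\mid\bar\bX]$, which yields both identities. The paper itself does not reprove this lemma (it is quoted from Lemma 1.1 of the cited prior work), but your derivation is the standard one and matches the intended argument.
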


Thus if the model parameters $\tau_0$, $\boldsymbol{\theta}_0$ in \eqref{eq:define_gibbs} are known, one can evaluate the causal effects $\mathrm{DE}$ and $\mathrm{IE}$ by computing the low-dimensional expectations $\langle \mathbf{Y} \rangle$. In \cite{bhattacharya2024causal}, the authors develop efficient algorithms to approximate these expectations for specific classes of interaction matrices $\bA_n$, under additional high-temperature assumptions on the outcome model \eqref{eq:define_gibbs}. In general, one would employ MCMC based techniques to approximate the low-dimensional marginals $\langle \mathbf{Y} \rangle$.   

It is well-known that sampling/approximating low-dimensional marginals of Markov Random Field models of the form \eqref{eq:define_gibbs} is hard at low-temperature (cf.~\cite{galanis2024sampling} and the references therein). This suggests that computing causal effects might also be challenging in low temperature regimes. Our first result shows that this is indeed true. 

\subsection{Computational hardness of treatment-effect estimation}


In this section, we investigate the inherent computational hardness in evaluating the treatment effects $\mathrm{DE}$ and $\mathrm{IE}$ at low-temperature. We refer to the direct effect as $\mathrm{DE}(\tau)$ below to highlight the dependence of the direct effect on $\tau$. Let $\mathcal{A}_n= \mathcal{A}_n(\bA_n, \tau)$ be a possibly randomized algorithm that runs in polynomial time in $n$, which computes $\widehat{\mathrm{DE}}(\tau)= \mathcal{A}_n(\bA_n, \tau)$. We introduce the definitions below for the direct effect $\mathrm{DE}$ for simplicity. These definitions have direct extensions to the indirect effect $\mathrm{IE}$. 

\begin{definition}[Uniform estimator]
\label{def:uniform_estimation}
    Fix a sequence $\mathbf{A}_n$ such that $\sup_n \| \bA_n\| < \infty$ and a sequence of polynomial time algorithms $\mathcal{A}_n$. For $\tau, \eta>0$, we say that $\mathcal{A}_n$ is a uniform estimator of $\mathrm{DE}$ on $[0,\tau]$ with tolerance $\eta$ if 
    \begin{align}\label{eq:de_approximable}
        \mathbb{P}\left( \sup_{\tau' \in [0, \tau]} \Big| \widehat{\mathrm{DE}}(\tau') - \mathrm{DE}(\tau') \Big| < \eta \right) = 1 - o(1),  
    \end{align}
    where $\mathbb{P}(\cdot)$ refers to the randomness of the algorithm $\mathcal{A}_n$. 
\end{definition}

\begin{remark}
    The notion of uniform estimators is closely linked to classical minimax estimation. Concretely, fix the sequence of interaction matrices $\{\bA_n: n \geq 1\}$ in \eqref{eq:define_gibbs} and consider the algorithmic task of computing the direct effect $\mathrm{DE}$. An adversary picks any $\tau' \in [0, \tau]$. An algorithm $\mathcal{A}_n$ is a universal estimator with tolerance $\eta$ if it can estimate $\mathrm{DE}(\tau')$ with error at most $\eta$ for any choice of 
    $\tau' \in [0, \tau]$ by the adversary. 
\end{remark}

\begin{remark}
     Note that the data generating distribution \eqref{eq:define_gibbs} is completely specified in our setting, and the bottleneck in computing $\mathrm{DE}$ is purely computational. Given polynomial time computational resources, one is only able to compute an approximation to the parameter of interest. The notion of uniform estimators introduced above captures this computational barrier to parameter evaluation. In theoretical computer science, such algorithms would be referred to as Polynomial Time Approximation Schemes (PTAS)~\cite{vazirani2001approximation}. Additionally, we note that the notion of uniform estimators is distinct from the traditional notion of statistical estimators; in statistical estimation, one seeks to learn the parameters of the unknown data distribution. On the contrary, uniform estimators compute a noisy approximation to a well-defined parameter under computational constraints. We still use the estimation terminology as it is more natural to a statistical audience. 
\end{remark}

\begin{remark}[Running time of $\mathcal{A}_n$] 
For any $\tau>0$ and $\tau' \in [0, \tau]$ we assume that the running time of $\mathcal{A}_n(\bA_n, \tau')$ is $O(n^{C(\tau, \eta)})$ for some constant $C(\tau, \eta)>0$. Equivalently, for a fixed tolerance $\eta>0$, the running time of $\mathcal{A}_n$ is uniformly bounded for all $\tau' \in [0, \tau]$. Additionally, the exponent of the polynomial is allowed to depend on the tolerance $\eta$. Consequently, the computational complexity is allowed to grow as we let $\eta \to 0$.      
\end{remark}

The notion of uniform estimation is intrinsically related to a tolerance $\eta>0$. In an ideal setting, one would have a polynomial time algorithm for any desired tolerance $\eta$. This corresponds to a notion of consistent estimation, and is formalized in the following definition. 

\begin{definition}[Consistent uniform estimation]
\label{def:consistent_uniform}
Fix a sequence $\bA_n$ with $\sup_n \|\bA_n\| <\infty$ and $\tau>0$. If for every $\eta>0$, there exists a uniform estimator $\mathcal{A}= \mathcal{A}_{n,\eta}$ of $\mathrm{DE}$ on $[0,\tau]$ with tolerance $\eta$, we say that $\mathrm{DE}$ admits consistent uniform estimation on $[0,\tau]$. 
\end{definition}

So far, we allow the algorithm $\mathcal{A}_n$ to depend on the interaction matrix $\bA_n$. One could hope for a universal algorithm, which would suffice for a broad class of interaction matrices. We formalize this notion in our following definition.

\begin{definition}[Universal uniform estimator]
For $\varepsilon>0$, let 
\begin{align}\label{eq:set_t_eps}
    \mathscr{T}(\varepsilon) = \{ \{\bA_n: n \geq 1\}: \sup_n \|\bA_n\| < \infty, \lambda_{\max}(\bA_n) - \lambda_{\min}(\bA_n) < 1+ \varepsilon\}. 
\end{align}
Fix $\tau, \eta>0$. We say that $\mathcal{A}_n$ is a universal uniform estimator of $\mathrm{DE}$ with parameters $(\varepsilon,\tau,\eta)$ if for any sequence $\{\bA_n : n \geq 1\} \in \mathscr{T}(\varepsilon)$, $\mathcal{A}_n(\bA_n, \cdot)$ is a uniform estimator of $\mathrm{DE}$ on $[0,\tau]$ with tolerance $\eta$. 
    
\end{definition}

\begin{remark}
    The notion of universal uniform estimation is stronger than uniform estimation introduced in Definition~\ref{def:uniform_estimation}. In this case, given $\varepsilon>0$ and $\tau>0$, the adversary can choose $\tau' \in [0, \tau]$ and a sequence of interaction matrices $\bA_n$ in $\mathscr{T}(\varepsilon)$. The statistician has to produce one algorithm $\mathcal{A}_n$ which is simultaneously $\eta$ close to $\mathrm{DE}(\tau')$ for any choice of $\tau' \in [0,\tau]$ and $\bA_n$ by the adversary. 
\end{remark}

Armed with these notions, we provide rigorous evidence that universal uniform estimation of the direct effect $\mathrm{DE}$ is impossible. To this end, we first introduce a problem which is expected to exhibit average-case hardness.

\begin{definition}
\label{def:wishart}
    Suppose $\beta \ge -1$, $\gamma>0$, and for $n \in \mathbb{N}$, define $N= N(n)= \lceil n/\gamma\rceil$. Define two probability measures on $\mathbb{R}^{n\times N}$ as follows:
    \begin{enumerate}
        \item[(i)] Under $\mu_0$, draw $\bz_1,\ldots , \bz_N \sim N(0,\bI_n)$ independently.
        \item[(ii)] Under $\mu_1$, we first draw $\mathbf{u} \sim \mathrm{U}(\{ \pm 1\}^n)$. Given $\mathbf{u}$, draw $\bz_1,\ldots \bz_N \sim \mathsf{N}(0, \bI_n+ \frac{\beta}{n} \bu \bu^\top)$.
    \end{enumerate}
    Denote the two measures $\mu_0$ and $\mu_1$ collectively by $\mathrm{Wishart}(\beta, \gamma)$.
\end{definition}

\begin{definition}[Hypothesis test]
    A polynomial time hypothesis test is an arbitrary two-valued function $\phi : \mathbb{R}^{n \times N}$ which can be evaluated in polynomial time in $n$. A polynomial time hypothesis test is asymptotically consistent if 
    \begin{align*}
    \lim\limits_{n} \mu_0(\phi(\bz_1,\ldots, \bz_N)=m_0)=  \lim\limits_{n} \mu_1(\phi(\bz_1,\ldots, \bz_N)=m_1)=1.
\end{align*}
\end{definition}

The following conjecture~\cite{bandeira2020computational} deals with the existence of asymptotically consistent polynomial time hypothesis tests.

\begin{conjecture}\label{conj:bkw}
    If $\beta>-1$ and $\beta^2<\gamma$, there does not exist an asymptotically consistent sequence of polynomial time hypothesis tests between the distributions of $\mathrm{Wishart}(\beta, \gamma)$. 
\end{conjecture}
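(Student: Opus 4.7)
The plan is to provide evidence for this conjecture via the low-degree polynomial method, since an unconditional lower bound against all polynomial-time algorithms would require breakthroughs in complexity theory that are currently out of reach. What one can realistically do is rule out a broad and powerful subclass of polynomial-time tests---low-degree polynomial tests---and invoke the low-degree conjecture to transfer this into evidence against all efficient tests. This is the route taken in \cite{bandeira2020computational}, and I would follow it here.

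First, I would set up the low-degree likelihood ratio framework. Let $L_n(Z) = \frac{d\mu_1}{d\mu_0}(Z)$ with $Z = (\bz_1, \ldots, \bz_N) \in \mathbb{R}^{n \times N}$, and let $L_n^{\leq D}$ be its $L^2(\mu_0)$ projection onto polynomials of total degree at most $D$ in the entries of $Z$. The working principle is: if $\|L_n^{\leq D}\|_{L^2(\mu_0)}^2 = O(1)$ for some $D = \omega(\log n)$, then no polynomial of degree at most $D$ can be thresholded to give a consistent test; the low-degree conjecture \cite{hopkins2018statistical} then gives evidence that no polynomial-time test can distinguish either. Reducing the conjecture to this $L^2$ bound is the key conceptual step.

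Next, I would compute $\|L_n^{\leq D}\|^2$ explicitly using the tensorial Hermite basis on $\mathbb{R}^{n \times N}$, which is orthonormal under $\mu_0$. Writing
\begin{align*}
L_n(Z) = \mathbb{E}_{\bu}\Big[\prod_{j=1}^N \frac{d\mathsf{N}(0, \bI_n + \tfrac{\beta}{n} \bu\bu^\top)}{d\mathsf{N}(0, \bI_n)}(\bz_j)\Big],
\end{align*}
and expanding each single-column ratio in Hermite polynomials, the coefficient on a multi-index $\alpha$ can be written as a moment in $\bu$ of a product of scalars of the form $\langle \bu, \cdot \rangle$. After squaring and using the second-moment method on $\mu_0$, one sees that $\|L_n^{\leq D}\|^2$ decomposes as a sum, over degree, of quantities that depend on the overlap statistic $\langle \bu, \bu' \rangle / n$ for two independent uniform sign vectors. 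The target bound is roughly
\begin{align*}
\|L_n^{\leq D}\|^2 \;\lesssim\; \sum_{k=0}^{D} \Big(\tfrac{\beta^2}{\gamma}\Big)^k \cdot (1 + o(1)),
\end{align*}
which converges to a constant precisely when $\beta^2 < \gamma$.

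The main obstacle will be the combinatorial accounting in the Hermite expansion: one must show that the contribution at degree $k$ behaves as $(\beta^2/\gamma)^k$, which requires pairing the $N \asymp n/\gamma$ columns against the moments $\mathbb{E}[(\langle \bu, \bu'\rangle/n)^k] = \Theta(n^{-k/2})$ of the overlap, and counting the number of tuples of index multisets correctly. The $\beta > -1$ hypothesis enters to guarantee the Gaussian densities are well-defined (the covariance $\bI_n + \tfrac{\beta}{n}\bu\bu^\top$ stays positive definite). Once the $L^2$ bound is in hand, the remaining step is a direct invocation of the low-degree conjecture; as emphasized above, this final transfer from low-degree hardness to full polynomial-time hardness is what prevents the statement from being promoted from conjecture to theorem.
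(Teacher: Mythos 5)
This statement is a conjecture that the paper does not prove (and cannot, absent a breakthrough in average-case complexity); it is imported verbatim from \cite{bandeira2020computational}, where the supporting evidence is exactly the low-degree likelihood-ratio computation you outline, showing $\|L_n^{\leq D}\|^2 = O(1)$ when $\beta > -1$ and $\beta^2 < \gamma$ and then invoking the low-degree conjecture of \cite{hopkins2018statistical}. Your sketch correctly identifies both the conditional nature of the statement and the standard route to evidence for it, so it matches the approach of the paper's cited source.
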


In \cite{bandeira2020computational}, the authors provide rigorous evidence for this conjecture based on the low-degree likelihood framework~\cite{hopkins2018statistical}. Our next result is a reduction from universal uniform estimation to hypothesis testing in this spiked Wishart problem. 

\begin{theorem}\label{thm:estimation_hard}
  Assume $\boldsymbol{\theta}_0=0$ in \eqref{eq:define_gibbs}.  For any $\varepsilon>0$, there exists $\bar{\tau}= \bar{\tau}(\varepsilon)>0$ and a function $\eta: (  \bar{\tau}, \infty) \to \mathbb{R}^+$ such that the following holds: If there exists a universal uniform estimator $\mathcal{A}_n$ of $\mathrm{DE}$ with parameters $(\varepsilon, \tau, \eta(\tau))$ for some $\tau >  \bar{\tau}$ then Conjecture~\ref{conj:bkw} is  false.    
\end{theorem}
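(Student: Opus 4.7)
The plan is a polynomial-time reduction from hypothesis testing in the spiked Wishart model to universal uniform estimation of $\mathrm{DE}$. Given $\bz_1,\ldots,\bz_N \sim \mathrm{Wishart}(\beta,\gamma)$ with $\beta^2<\gamma$ and $\beta>-1$, I would construct a data-dependent interaction matrix $\bA_n = \bA_n(\bz_1,\ldots,\bz_N)$ that, with high probability, lies in $\mathscr{T}(\varepsilon)$, and then use the universal estimator $\mathcal{A}_n$ at $\mathrm{poly}(n)$ values $\tau' \in [0,\tau]$ to approximate (up to error $\tau\eta(\tau)$) the integral $\int_0^\tau \mathrm{DE}(s)\,ds$. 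A quantitative lower bound on the gap between this integral under $\mu_0$ and $\mu_1$ then yields a polynomial-time asymptotically consistent test, contradicting Conjecture~\ref{conj:bkw}.

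For the construction, take $\bA_n = c\cdot(\hat\Sigma_n - I_n)$ with $\hat\Sigma_n = \frac{1}{N}\sum_i \bz_i\bz_i^\top$ and $c=c(\gamma,\varepsilon)$ a scalar to be tuned. By Marchenko--Pastur-type concentration (e.g.\ Bandeira--van Handel), the bulk spectrum of $\hat\Sigma_n - I_n$ is asymptotically contained in $[-2\sqrt\gamma + \gamma,\, 2\sqrt\gamma + \gamma]$ under both measures, with an additional within-bulk rank-one spike of strength $\beta$ in direction $\mathbf{u}$ under $\mu_1$ (the sub-BBP regime, since $\beta<\sqrt\gamma$). Choosing $\gamma = \gamma(\varepsilon)$ small and $c$ so that $4c\sqrt\gamma < 1+\varepsilon$ guarantees $\bA_n \in \mathscr{T}(\varepsilon)$ with probability $1-o(1)$ under both hypotheses.

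The analytic engine is the identity $\int_0^\tau \mathrm{DE}(s)\,ds = 2\,\mathbb{E}_{\bar \bT}[\Phi_n(\tau,\bA_n') - \Phi_n(0,\bA_n')]$, derived from Lemma~\ref{lem:de_define} by the change of variables $Y_i' = \bar T_i Y_i$, where $\bA_n' = \mathrm{diag}(\bar \bT)\bA_n\mathrm{diag}(\bar \bT)$ has the same spectrum as $\bA_n$ and $\Phi_n(\tau,\bA) = \frac{1}{n}\log\sum_\by \exp(\frac{1}{2}\by^\top \bA \by + \tau \mathbf{1}^\top \by)$. The rank-one contribution to $\bA_n'$ under $\mu_1$ can be linearised by the Hubbard--Stratonovich identity
\begin{equation*}
e^{\frac{c\beta}{2n}(\mathbf{u}'^\top \by)^2} \;\propto\; \int_{\mathbb{R}} e^{-\frac{n}{2c\beta}s^2 + s\,\mathbf{u}'^\top \by}\,ds,
\end{equation*}
with $\mathbf{u}' = \bar \bT \odot \mathbf{u}$, reducing the spiked log-partition to a Laplace integral over $s$ of the bulk free energy at shifted external field $\tau \mathbf{1} + s\mathbf{u}'$. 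Combining operator-norm concentration of $\bA_n$ with a quenched free-energy limit theorem (in the spirit of the Parisi/Guerra bounds for SK-like interactions) should yield the convergence $\frac{1}{2}\int_0^\tau \mathrm{DE}(s)\,ds \to F_*(\tau;\gamma,\beta)$ under each hypothesis; call these limits $F_0$ and $F_1$.

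The main obstacle is proving the quantitative separation $F_0(\tau;\gamma) \ne F_1(\tau;\gamma,\beta)$ with an explicit uniform lower bound $\Delta F(\tau) > 0$, after which one sets $\eta(\tau) := \Delta F(\tau)/(3\tau)$, numerically integrates $\widehat{\mathrm{DE}}$ using $\mathrm{poly}(n)$ queries to $\mathcal{A}_n$ (exploiting uniformity over $\tau' \in [0,\tau]$), and thresholds at the midpoint $(F_0+F_1)/2$. The difficulty is that first-order perturbative corrections in $\beta$ are killed by the $\bar \bT$-average, since sign-flips randomise the spike direction $\mathbf{u}'$ uniformly over $\{\pm 1\}^n$; the distinguishing signal must come from the non-perturbative \emph{sign-vector alignment} effect---because $\mathbf{u} \in \{\pm 1\}^n$ attains $\max_\by(\mathbf{u}^\top\by)^2 = n^2$, whereas a generic unit eigenvector $U_1$ of the bulk satisfies $\max_\by(U_1^\top\by)^2 \le 2n/\pi + o(n)$ by Cauchy--Schwarz---which translates, via the Parisi/cavity analysis of the resulting Hopfield-like model, into an intensive free-energy gap. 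Turning this heuristic into a rigorous uniform-in-$n$ lower bound on $\Delta F(\tau)$ (and tracking its dependence on $\tau,\gamma,\varepsilon$ to define $\bar\tau(\varepsilon)$ and $\eta(\tau)$) is the step I expect to demand the most technical work.
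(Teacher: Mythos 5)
Your first half is on the right track and matches the paper's: you correctly identify that a uniform estimator of $\mathrm{DE}$ on $[0,\tau]$ lets you numerically approximate $\int_0^\tau \mathrm{DE}(s)\,ds = \tfrac{2}{n}\bigl(\mathbb{E}_{\bar\bT}[\log Z_n(\tau)] - \log Z_n(0)\bigr)$. But from there the proposal has two genuine gaps. First, you never explain how to turn this difference into something testable. The paper's key additional step is to show that for $\tau$ large (this is where $\bar\tau(\varepsilon)$ comes from), $\tfrac{1}{n}\log Z_n(\tau)$ is uniformly approximated by the explicit, polynomial-time-computable quantity $\phi(\tau) = \tfrac{1}{2n}\mathbf{t}^\top\bA_n\mathbf{t} + \tfrac{1}{n}\sum_i\log(2\cosh(\tau t_i))$ (a pinning argument via Bennett's inequality, since at large external field $\by$ concentrates near $\bt$). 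Combined with the integral of $\widehat{\mathrm{DE}}$, this yields a PTAS for $\tfrac{1}{n}\log Z_n(0)$ with tolerance $2\eta$. Without this step, your scheme requires the \emph{difference} of free energies at $\tau$ and $0$ to separate the hypotheses, and you have no handle on the $\tau$-endpoint.

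Second, and more seriously, the reduction from a PTAS for $\tfrac{1}{n}\log Z_n(0)$ to a consistent test for $\mathrm{Wishart}(\beta,\gamma)$ — including the construction of the coupling matrix from the samples, the calibration of the eigenvalue spread against the threshold in $\mathscr{T}(\varepsilon)$, and the quantitative free-energy separation between null and spiked instances — is precisely the content of Kunisky's theorem, which the paper invokes as a black box. You attempt to reprove this from scratch and explicitly concede that the separation $F_0\ne F_1$ is "the step I expect to demand the most technical work"; as written it is a heuristic (the Hubbard--Stratonovich linearisation also fails for the negatively spiked case $\beta<0$, which is the relevant hard regime, and the claimed bound $\max_\by(U_1^\top\by)^2\le 2n/\pi+o(n)$ does not follow from Cauchy--Schwarz, which only gives $n$). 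Since the conclusion of the theorem rests entirely on that separation, the proof is incomplete. The fix is to stop at the PTAS for $\tfrac{1}{n}\log Z_n(0)$ and cite the known reduction rather than rebuilding it.
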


Theorem~\ref{thm:estimation_hard} implies if Conjecture~\ref{conj:bkw} holds, there does not exist a universal uniform estimator with arbitrarily small tolerance $\eta$. Equivalently, no universal uniform estimator over $[0,\tau]$ can achieve tolerance below $\eta(\tau)$. This result establishes that it is impossible to estimate the direct effect $\mathrm{DE}$ consistently by a common algorithm. The tolerance parameter $\eta(\tau)$ is analogous to a minimax lower bound on the estimation error; however, in our context, it captures a fundamental lower bound on the tolerance that can be achieved by a universal algorithm.



\begin{remark}
    Theorem~\ref{thm:estimation_hard} performs an average case reduction from the existence of universal uniform estimators of the direct effect $\mathrm{DE}$ to a polynomial time hypothesis test in the  spiked Wishart model (Definition~\ref{def:wishart}). We refer the interested reader to~\cite{brennan2018reducibility,brennan2019optimal,boix2025average} for recent progress on average case reductions in high-dimensional statistics. We note that these results focus on statistical-computational gaps in the inference of high-dimensional parameters. In contrast, we establish computational hardness in computing a low-dimensional treatment effect $\mathrm{DE}$. 
\end{remark}

\begin{remark}[Connections to NP-hardness]
Theorem~\ref{thm:estimation_hard} relies on average case hardness in the spiked Wishart problem (Definition \ref{def:wishart}). In the proof of  Theorem~\ref{thm:estimation_hard}, we show that a universal uniform estimator for $\mathrm{DE}$ can be used to design a PTAS for $\log Z_n$  \eqref{eq:define_Z} with $\tau_0=0$, $\btheta_0=0$. In a recent result, Kunisky \cite{kunisky2024optimality} shows that a PTAS for $\log Z_n$ implies the existence of consistent polynomial time hypothesis tests in the spiked Wishart problem, which contradicts Conjecture~\ref{conj:bkw}.  Theorem~\ref{thm:estimation_hard} thus follows upon combining our PTAS with the conclusions of \cite{kunisky2024optimality}.  Following the work of Kunisky \cite{kunisky2024optimality}, Galanis et. al. \cite{galanis2024sampling} show that approximating $\log Z_n$ at $\tau_0=0$, $\btheta_0=0$ with small tolerance is NP hard. On the other hand, with $\varepsilon$, $ \bar{\tau}(\varepsilon)$ and $\eta$ as in Theorem~\ref{thm:estimation_hard}, if there exists a universal uniform estimator of $\mathrm{DE}$ with parameters $(\varepsilon, \tau, \eta(\tau))$ for some $\tau >  \bar{\tau}$, then we show that one can estimate $\log Z_n$ (at $\tau_0 =0$, $\btheta_0=0$) with tolerance $\eta(\tau)$. This contradicts the NP hardness established in \cite{galanis2024sampling}. With this simple modification, we can reduce the evaluation of treatment effects under interference to NP hard problems from complexity theory.

\end{remark}

The main takeaway from Theorem~\ref{thm:estimation_hard} is that universal uniform estimation of treatment effects is impossible. However, under additional assumptions on the interaction matrix $\bA_n$, one can potentially develop tailored algorithms which facilitate consistent uniform estimation of the treatment effects $\mathrm{DE}$ and $\mathrm{IE}$. In the next two subsections, we consider $\bA_n$ arising from dense graphs and i.i.d. Gaussian matrices respectively, and develop consistent uniform estimates in these special cases. 


\subsection{Causal effect estimation for dense graphs}

In this section, we assume that the interaction matrix $\bA_n$ arises from an underlying sequence of dense graphs. In Section~\ref{sec:asymptotic}, we show that the direct and indirect causal effects converge to an asymptotic limit as $n \to \infty$. In Section~\ref{sec:regularity_lemma}, we turn to the estimation problem, and develop new algorithms for causal effect estimation based on the algorithmic regularity lemma. We emphasize that the asymptotic limit and the algorithm are valid, even at low temperature.

Throughout, we make the following assumption on the interaction matrices $\bA_n$. 

\begin{ass}[Interaction matrix]\label{assn:interaction}
    $\max_{i,j} |n \bA_n(i,j)| \leq 1$.  
\end{ass}
We provide natural examples of matrices $\bA_n$ satisfying these assumptions in Section~\ref{sec:examples}. 

\subsubsection{Asymptotic characterization using graph limits}\label{sec:asymptotic}
In this section, we study the limiting behavior of the causal estimands of interest. Assuming that the sequence of (scaled) interaction matrices $\bA_n$ converge in cut metric to a limiting graphon, we derive variational characterizations of the causal effects in terms of the limiting graphon. 
%
%
%
%
Cut distance/cut metric has been introduced in the combinatorics literature to study limits of graphs and matrices (see~\cite{frieze1999quick}), and has received significant attention in the theory of graph limits  (\cite{bc_lpi,borgs2018p,borgsdense1,borgsdense2}). For more details on the cut metric and its manifold applications, we refer the interested reader to \cite{Lovasz2012}. Below we formally introduce the notion of strong and weak cut distances used in our work.

\begin{definition}\label{def:defirst}
	Suppose $\mathcal{W}$ is the space of all symmetric real-valued functions on $[0,1]^2$ taking values in $[0,1]$. Given two functions $W_1,W_2\in \mathcal{W}$, define the strong cut distance between $W_1, W_2$ by setting
	$$d_\square(W_1,W_2):=\sup_{S,T}\Big|\int_{S\times T} \Big[W_1(x,y)-W_2(x,y)\Big]dx dy\Big|.$$
Here, the supremum is taken over all measurable $S,T \subseteq [0,1]$.  Define the weak cut distance $$\delta_\square(W_1,W_2):= \inf_{\sigma} d_\square(W^\sigma_1,W_2)= \inf_{\sigma} d_\square(W_1,W^\sigma_2)$$ where $\sigma$ ranges from all measure preserving bijections $[0,1]\rightarrow [0,1]$ and $W^\sigma(x,y)=W(\sigma(x),\sigma(y))$. Given a symmetric matrix $\mathbf{A}_n$, define the empirical graphon $W_{\mathbf{A}_n}\in \mathcal{W}$:
	\begin{align*}
		W_{\mathbf{A}_n}(x,y)=& \mathbf{A}_{n}(i,j)\text{ if }\lceil nx\rceil =i, \lceil ny\rceil =j.
	\end{align*}
    \end{definition}

 \begin{ass}
 \label{assn:graphon_conv}
	We will assume in this section that 
	the sequence of matrices $\{\mathbf{A}_n\}_{n\ge 1}$ defined in \eqref{eq:define_gibbs} converges in weak cut distance,~i.e. for some $W\in \mathcal{W}$,
	\begin{align}\label{eq:cut_con}
		\delta_{\square}(W_{n \mathbf{A}_n},W) \rightarrow 0.
	\end{align} 
\end{ass}

\noindent
We also require the following definition  to state our result. 

\begin{definition}\label{def:exp_tilt}
    For any probability measure $\mu$ on $[-1,1]$ and $\lambda \in \mathbb{R}$, define its $\lambda$-exponential tilt as 
    $$\frac{d\mu_\lambda}{d \mu}(x) := \exp(\lambda x - \alpha (\lambda)), \qquad \text{where } \alpha (\lambda):= \log \int e^{\lambda x} d \mu(x).$$
Then the function $\alpha(\cdot)$ is infinitely differentiable, with $$\alpha'(\lambda)=\mathbb{E}_{\mu_\lambda}(X),\quad \alpha''(\lambda)=\mathrm{Var}_{\mu_\lambda}(X)>0.$$
Assume now that $\mathrm{Supp}(\mu) = [-1,1]$. Consequently, for $m\in (-1,1)$, there exists $\lambda=\lambda(m)\in \rr$ such that $\bE_{\mu_\lambda}(y)=m$. Define $I(m)=D(\mu_\lambda|\mu)$, where $D(\cdot|\cdot)$ denotes the Kullback-Leibler divergence. Finally, define $I(1) = D(\delta_1 | \mu)$ and $I(-1) = D(\delta_{-1}|\mu)$, where $\delta_1$ and $\delta_{-1}$ refer to the point masses at $1$ and $-1$ respectively. 
\end{definition}
\begin{definition}
    Let $\mathcal{F}$ denote the set of all measurable functions on $[0,1]\times\rr \times [-1,1]$ to $[-1,1]$. For $F \in \mathcal{F}$ and $i \in \{1,2\}$,  define $F_i=F(U_i,\bX_i,\bT_i)$, where $U_i \sim \mathrm{Unif}(0,1)$, $\bX_i\sim \bP_X$, $\bT_i \sim \mathrm{Unif}(\{\pm 1 \})$ are independent. Let $W \in \mathcal{W}$ and recall $I$ introduced in Definition \ref{def:exp_tilt} with $\mu= \frac{1}{2}(\delta_{+1}+ \delta_{-1})$. Define 
    \begin{align}\label{eq:opt_g_f}
        &G_{W,\tau,\boldsymbol{\theta}, \gamma}(F)= \bE(W(U_1,U_2)F_1F_2)+ \bE(F_1 (\boldsymbol{\theta}^\top \bX_1+ \tau \bT_1+\gamma)) - \bE(I(F_1)),
        \nonumber \\
         &\tilde G_{W,\tau,\theta, \gamma}(F)= \bE(W(U_1,U_2)F_1F_2)+ \bE(F_1 (\boldsymbol{\theta}^\top \bX_1 - \tau+\gamma)) - \bE(I(F_1)).
    \end{align}
\end{definition}
For $\bt \in \{ \pm 1\}^n$ and $\bx = (\bx_1, \cdots, \bx_n) \in ([-1,1]^d)^{\otimes n}$, define, for $\gamma \in [-B,B]$,
\begin{align}
    &\tilde Z_n(\bt,\bx)= \sum\limits_{\by \in \{-1,1\}^n}\exp\Big(\frac{1}{2}\,\mathbf{y}^\top \mathbf{A}_n \mathbf{y}+ \sum_{i=1}^{n}  y_i (\tau_0 t_i + \btheta^\top_0 \bx_i +\gamma) \Big). \label{eq:extra_gamma}
\end{align}

\begin{theorem}\label{thm:zn_limit}
    Suppose the interaction matrix $\bA_n$ satisfies 
    \eqref{eq:cut_con}. Let $\bar{\mathbf{T}} \sim \mathrm{Unif}(\{ \pm 1\}^n)$, $\bar{\mathbf{X}} \sim \bP^{\otimes n}_X$ and $\mathbf{y}|\mathbf{t},\mathbf{x}$ satisfy \eqref{eq:define_gibbs}  with $\tau=\tau_0$, $\boldsymbol{\theta}=\boldsymbol{\theta}_0$. Then,
    \begin{equation}\label{eq:zn_limit}
       \frac{1}{n}\mathbb{E}_{\bar{\bT},\bar{\bX}} [\log \tilde Z_n (\bar{\bT},\bar{\bX})] \rightarrow \sup_{F\in \mathcal{F}} G_{W,\tau_0,\btheta_0,\gamma}(F).
    \end{equation}
If $\sup_{F\in \mathcal{F}} G_{W,\tau,\btheta_0, 0}(F)$ is differentiable w.r.t. $\tau$ at $\tau_0$, and we have
    \begin{equation}\label{eq:de_limit}
 \mathrm{DE} \to \mathrm{DE}_{\infty} := 2 \frac{\partial}{\partial \tau} \sup_{F\in \mathcal{F}} G_{W,\tau, \btheta_0,0}(F)\Big\vert_{\tau=\tau_0}.  
    \end{equation}
Further, 
\begin{align}
 \frac{1}{n}\mathbb{E}_{\bar{\bX}}[\log \tilde Z_n (-\mathbf{1},\bar{\bX})] \rightarrow \sup_{F\in \mathcal{F}} \tilde G_{W,\tau_0, \btheta_0, \gamma}(F).
 \end{align}
If both $\sup_{F\in \mathcal{F}} G_{W,\tau_0, \btheta_0,\gamma}$ and $\sup_{F\in \mathcal{F}} \tilde G_{W,\tau_0, \btheta_0,\gamma}$ are differentiable w.r.t. $\gamma$ at $0$, then we have

\begin{equation}\label{eq:ie_limit}
\mathrm{IE} \to \mathrm{IE}_{\infty}:=  \frac{\partial}{\partial \gamma} \sup_{F\in \mathcal{F}} G_{W,\tau_0, \btheta_0,\gamma}(F)\Big\vert_{\gamma=0}+ \frac{\partial}{\partial \gamma} \sup_{F\in \mathcal{F}} \tilde G_{W,\tau_0, \btheta_0,\gamma}(F)\Big\vert_{\gamma=0} - \frac{1}{2}\mathrm{DE}_{\infty}.
    \end{equation}
\end{theorem}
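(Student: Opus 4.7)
The plan is to (i) establish the graphon variational formula \eqref{eq:zn_limit} for the normalized free energy, and then (ii) deduce \eqref{eq:de_limit} and \eqref{eq:ie_limit} by differentiating in the auxiliary parameters $\tau$ and $\gamma$ and invoking a standard convexity argument. The first step is the main technical workload.

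For the free energy limit, fix $(\bt,\bx,\gamma)$: the Hamiltonian in \eqref{eq:extra_gamma} is a quadratic form in $\by\in\{\pm1\}^n$ with site-dependent external field $h_i:=\tau_0 t_i+\btheta_0^{\top}\bx_i+\gamma$, uniformly bounded thanks to Assumption~\ref{assn:parameter_space} together with $|t_i|\le1$ and $\|\bx_i\|_{\infty}\le1$. Since $\max_{i,j}|n\bA_n(i,j)|\le1$ (Assumption~\ref{assn:interaction}) and $W_{n\bA_n}\to W$ in the cut metric \eqref{eq:cut_con}, the model lies squarely in the dense-Ising regime. The Gibbs variational principle reads
\[
\log\tilde Z_n(\bt,\bx)=n\log 2+\sup_{\nu}\Big\{\tfrac{1}{2}\bE_{\nu}[\by^{\top}\bA_n\by]+\sum_{i}h_i\bE_{\nu}[y_i]-D(\nu\,\|\,\nu_0)\Big\},
\]
with $\nu_0$ uniform on $\{\pm 1\}^n$. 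Restricting to product measures with means $\bm\in[-1,1]^n$ yields the mean-field lower bound; the matching upper bound at rate $o(n)$ is obtained from the nonlinear large-deviations / regularity program for dense Ising models (Chatterjee--Dembo, Basak--Mukherjee, Eldan), which accommodates a bounded, site-dependent external field. Converting the discrete problem over $\bm\in[-1,1]^n$ into a problem over profiles $F\in\mathcal{F}$ relies on the identification $m_i\leftrightarrow F(U_i,\bx_i,t_i)$ and the weak cut-norm convergence of $n\bA_n$, which replaces $\tfrac{1}{n^2}\bm^{\top}\bA_n\bm$ by $\bE[W(U_1,U_2)F_1F_2]$. Averaging over $(\bar\bT,\bar\bX)$ uses an i.i.d. law of large numbers together with a bounded-differences inequality to show that the empirical distribution of $(\bar T_i,\bar\bX_i)$ is close to $\mathrm{Unif}(\{\pm 1\})\otimes\bP_X$ with high probability, which produces the $(\bT_1,\bX_1)$-expectations inside $G_{W,\tau_0,\btheta_0,\gamma}(F)$. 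Running the same argument with $\bt=-\mathbf{1}$ yields the analogous limit involving $\tilde G$.

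For the causal effects, differentiating \eqref{eq:extra_gamma} in its last two parameters gives
\[
\frac{\partial}{\partial\tau}\log\tilde Z_n(\bt,\bx;\tau,\gamma)=\sum_{i}t_i\langle y_i\rangle_{\tau,\gamma},\qquad \frac{\partial}{\partial\gamma}\log\tilde Z_n(\bt,\bx;\tau,\gamma)=\sum_{i}\langle y_i\rangle_{\tau,\gamma}.
\]
Combining with Lemma~\ref{lem:de_define} and interchanging $\partial/\partial\tau$ with $\bE_{\bar\bT,\bar\bX}$ (justified by $|\langle y_i\rangle|\le 1$),
\[
\mathrm{DE}=2\frac{\partial}{\partial\tau}\Big\{\tfrac{1}{n}\bE_{\bar\bT,\bar\bX}[\log\tilde Z_n(\bar\bT,\bar\bX;\tau,0)]\Big\}\Big|_{\tau=\tau_0},
\]
and an analogous identity writes $\mathrm{IE}$ as a sum of $\partial_{\gamma}$-derivatives of the two averaged free energies plus $-\tfrac{1}{2}\mathrm{DE}$. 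The pre-limit functions $\tau\mapsto\tfrac{1}{n}\bE[\log\tilde Z_n]$ and $\gamma\mapsto\tfrac{1}{n}\bE[\log\tilde Z_n]$ are convex (each $\log\tilde Z_n$ is a log-sum-exp of linear functions), and the limits $\sup_{F}G$ and $\sup_{F}\tilde G$ are likewise convex as suprema of linear functions in $(\tau,\gamma)$. Under the stated differentiability assumptions at $\tau_0$ and at $\gamma=0$, Griffiths' lemma---pointwise convergence of convex functions implies convergence of their derivatives at any point where the limit is differentiable---delivers \eqref{eq:de_limit} and \eqref{eq:ie_limit}.

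The main obstacle lies entirely in Step 1: rigorously establishing the mean-field upper bound for the partition function at arbitrary temperature. Because we are not restricted to the high-temperature regime, the model does not decorrelate spatially, so the tight upper bound must be extracted from the dense-graph structure via weak regularity / cut-norm approximations rather than expansion or coupling arguments. A second, subtler difficulty is controlling the joint empirical law of $(U_i,\bar\bX_i,\bar T_i)$ and interchanging the two limits---$n\to\infty$ and the passage from the $n$-dimensional optimization to the continuous variational problem over $\mathcal{F}$---which requires a uniform-in-$F$ estimate that is produced once the cut-norm convergence is combined with the uniform boundedness of the external field.
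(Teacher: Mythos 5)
Your proposal is correct and follows essentially the same route as the paper: the nonlinear large-deviations (Chatterjee--Dembo/Basak--Mukherjee) mean-field approximation of $\log\tilde Z_n$ by a finite-dimensional variational problem, passage to the graphon variational problem over $\mathcal{F}$ via cut-norm convergence, concentration of $\frac{1}{n}\log\tilde Z_n$ by a bounded-differences (Efron--Stein) argument, and Griffiths' lemma for the derivatives in $\tau$ and $\gamma$. The only place your sketch is looser than the paper is the upper-bound reduction from arbitrary optimizers $\bv\in[-1,1]^n$ to profiles $F\in\mathcal{F}$, which the paper handles by taking subsequential limits of the empirical joint law of $(U_i,v_i,\bar T_i,\bar\bX_i)$ and then applying Jensen's inequality with the convexity of $I$ to replace $V$ by its conditional expectation given $(U,\bar T,\bar\bX)$.
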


\begin{remark}{(Differentiability of the limit)}
\label{rem:differentiability}
 By direct differentiation, it follows that $\log \tilde{Z}_n(\bt, \bx)$ is a convex function in $\tau$. Thus the pointwise limit of $\mathbb{E}_{\bar{\bT},\bar{\bX}}[\log \tilde{Z}_n(\bar{\bT},\bar{\bX})]$ is also a convex function in $\tau$. Consequently, the limit is differentiable in $\tau$ at all but countably many points. Thus \eqref{eq:de_limit} specifies $\mathrm{DE}_{\infty}$ at all but countably many values of $\tau$.  The differentiability at $\gamma=0$ is more nuanced, and needs to be verified on a case-by-case basis. If the limit of $\mathbb{E}_{\bar{\bT},\bar{\bX}}[\log \tilde{Z}_n(\bar{\bT}, \bar{\bX})]$ is not differentiable at $\gamma=0$, we can derive bounds on the limiting indirect effect using one-sided derivatives.    
\end{remark}

\begin{remark}
    We present the limit characterization for sequences of dense interaction matrices $\bA_n$. However, the same techniques should extend to matrices $\bA_n$ converging to a limiting graphon in $L^p$ sense~\citep{bc_lpi}. 
    %
\end{remark}

 Theorem~\ref{thm:zn_limit} provides an exact expression for the limiting causal effects for sequences of dense interaction matrices $\bA_n$. If the limiting graphon $W$ and the parameters $\tau_0$, $\boldsymbol{\theta}_0$ are known, one could use this characterization to evaluate the limiting causal effects. The parameters $\tau_0$, $\boldsymbol{\theta}_0$ can be estimated from the data using maximum pseudo-likelihood, as discussed in Section~\ref{sec:parameter_estimation}. The limiting graphon $W$ can be estimated consistently by the empirical graphon $W_{n\bA_n}$. This yields an estimator which can be obtained by plugging in the estimated quantities into the variational representation. However, this variational problem could be challenging to solve in practice. In the next section, we present an algorithmic approach to estimate the causal effects, based on the algorithmic regularity lemma \cite{fox2019fast}. 

\subsubsection{Graphons and regularity Lemma}
\label{sec:regularity_lemma}
Here we introduce a more algorithmic approach based on the algorithmic regularity lemma \cite{fox2019fast}. 
%


We first describe our methodology for the case where $\bX_i$ are finitely supported. Formally, there exists 
\begin{equation}\label{eq:define_support}
    \mathcal{H}:= \{h_1,\ldots, h_m\}
\end{equation}
with $h_a \in [-1,1]^d$ such that $\mathbb{P}_{X}$ is supported on $\mathcal{H}$. The extension to general compactly supported covariates is discussed in Lemma~\ref{lem:general_h} in the Appendix. 


For general matrices $\bA_n$ satisfying Assumption~\ref{assn:interaction}, we will approximate the matrix using block-regular matrices, constructed using the algorithmic regularity lemma~\citep{fox2019fast}. To this end, define a block matrix as follows: 

\begin{definition}
    Fix $\varepsilon>0$ and $r \in \mathbb{N}$. For any $n \times n$ real symmetric matrix $\bA_n$, 
    we say that $\tilde{\mathbf{A}}_n$ is an $(r,\varepsilon)-$\textit{block approximation} of $\bA_n$ if there exists disjoint subsets $\{U_1, \cdots, U_{2^r}\} \subseteq [n]$ and $\{c_{kl} \in \mathbb{R}: 1\leq k,l \leq 2^r\}$ such that $\tilde{\mathbf{A}}_n = \sum_{k,l=1}^{2^r} c_{kl} \mathbf{1}_{{U}_k} \mathbf{1}_{U_l}^{\top}  $ and $\|\bA_n - \tilde{\mathbf{A}}_n \| \leq \varepsilon$. We set $U_0 = [n] \backslash \cup_{k=1}^{2^r} U_k$.  
\end{definition}

\begin{lemma}\label{lem:block_approx}
    For any matrix $\bA_n$ satisfying Assumption~\ref{assn:interaction} and $\varepsilon >0$, there exists $r:=r(\varepsilon)$ so that $\bA_n$ has an $(r,\varepsilon)$-block approximation $\tilde{\mathbf{A}}_n$. Further, this block approximation can be derived in $O(\varepsilon^{-O(1)} n^2 + n r)$ time. 
\end{lemma}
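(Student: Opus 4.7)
The plan is to combine the algorithmic weak regularity lemma of Fox-Lov\'asz-Zhao~\cite{fox2019fast} with a spectral truncation argument, tailored to the scale imposed by Assumption~\ref{assn:interaction}. The core observation is that matrices satisfying the assumption have delocalized top eigenvectors, which allows block approximation in operator norm with the number of parts depending only on $\varepsilon$.

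First, I would reduce the problem to approximating a low-rank matrix in operator norm. Since $\|\bA_n\|_\infty \leq 1/n$, the Frobenius norm satisfies $\|\bA_n\|_F \leq 1$, so the sum of squared eigenvalues is bounded: $\sum_i \lambda_i(\bA_n)^2 \leq 1$. At most $k' := \lceil 4/\varepsilon^2\rceil$ eigenvalues have $|\lambda_i| \geq \varepsilon/2$, so the rank-$k'$ spectral truncation $\bA_n^{(k')} := \sum_{i \leq k'}\lambda_i v_i v_i^\top$ satisfies $\|\bA_n - \bA_n^{(k')}\| \leq \varepsilon/2$. The eigenvalue equation $\lambda_i v_i = \bA_n v_i$ together with $\|\bA_n\|_\infty \leq 1/n$ and $\|v_i\|_1 \leq \sqrt{n}$ gives $|v_i(j)| \leq |\lambda_i|^{-1} \|\bA_n\|_\infty \|v_i\|_1 \leq 2/(\varepsilon \sqrt{n})$ for all $i \leq k'$; this is the delocalization property.

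Next, I would quantize each $v_i$ uniformly into $M := \lceil 24/\varepsilon^3 \rceil$ levels over the interval $[-2/(\varepsilon\sqrt{n}), 2/(\varepsilon\sqrt{n})]$, producing a piecewise-constant $\tilde v_i$ with $\|v_i - \tilde v_i\|_2 \leq 2/(\varepsilon M)$. The common refinement of these $k'$ coordinate-wise partitions yields disjoint parts $\{U_1,\ldots,U_{2^r}\}$ with $r := k' \lceil \log_2 M \rceil = O(\varepsilon^{-2}\log(1/\varepsilon))$, independent of $n$. Each $\tilde v_i$ is constant on every $U_k$, so $\tilde\bA_n := \sum_{i \leq k'} \lambda_i \tilde v_i \tilde v_i^\top$ automatically has the block form $\sum_{k,l} c_{kl} \mathbf{1}_{U_k}\mathbf{1}_{U_l}^\top$. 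A telescoping estimate using $\|v_i v_i^\top - \tilde v_i \tilde v_i^\top\| \leq (\|v_i\|_2+\|\tilde v_i\|_2)\|v_i - \tilde v_i\|_2 \leq 6/(\varepsilon M)$ and Cauchy-Schwarz $\sum_{i \leq k'} |\lambda_i| \leq \sqrt{k'(\sum_i \lambda_i^2)} \leq \sqrt{k'}$ then yields $\|\bA_n^{(k')} - \tilde\bA_n\| \leq 6\sqrt{k'}/(\varepsilon M) \leq \varepsilon/2$. Combining both error bounds gives $\|\bA_n - \tilde\bA_n\| \leq \varepsilon$, as required.

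The running time is dominated by the approximate top-$k'$ spectral computation, which Fox-Lov\'asz-Zhao carry out in $O(n^2 \varepsilon^{-O(1)})$ time; the quantization of each eigenvector takes $O(n)$, the common refinement (labeling each index with its $k'$-tuple of coordinate levels) takes $O(nk')$, and computing the block values from the quantized eigenvectors is $O(nr)$, all within the stated budget. The main obstacle is ensuring $r = r(\varepsilon)$ is dimension-free: a naive uniform quantization of unit-norm eigenvectors over $[-1,1]$ would require $M = \Omega(\sqrt{n}/\varepsilon)$ levels to achieve $O(\varepsilon)$ $\ell^2$-error, causing $r$ to grow with $n$. The scale of Assumption~\ref{assn:interaction} is precisely what delivers the delocalization $\|v_i\|_\infty = O(1/(\varepsilon\sqrt{n}))$, shrinking the effective quantization range and making the construction succeed with $r$ depending only on $\varepsilon$.
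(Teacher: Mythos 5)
Your proof is correct, but it takes a genuinely different route from the paper. The paper's proof is a black-box application of the algorithmic weak regularity lemma \cite[Theorem 2.1]{fox2019fast}: since $\max_{i,j}|n\bA_n(i,j)|\le 1$, that theorem directly produces an $r$-regular cut decomposition $\sum_{k=1}^r c_k\mathbf{1}_{Q_k}\mathbf{1}_{P_k}^\top$ with $r=O(\varepsilon^{-16})$ and operator-norm error $\varepsilon n$ (before rescaling by $n$) in $\varepsilon^{-O(1)}n^2$ time, and then takes the common refinement of the $Q_k,P_k$'s in $O(nr)$ time to obtain the block partition $\{U_k\}$. You instead build the block matrix by hand from a spectral truncation: the entrywise bound gives $\|\bA_n\|_F\le 1$, hence only $O(\varepsilon^{-2})$ eigenvalues exceed $\varepsilon/2$, and the same entrywise bound forces the delocalization $\|v_i\|_\infty\le 2/(\varepsilon\sqrt{n})$ of the corresponding eigenvectors, which is the key observation making a dimension-free quantization possible. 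Your accounting of the errors (spectral tail, rank-one perturbation, Cauchy--Schwarz over $\sum_i|\lambda_i|\le\sqrt{k'}$) is correct, and your construction actually yields a better part count, $r=O(\varepsilon^{-2}\log(1/\varepsilon))$ versus $O(\varepsilon^{-16})$. Two small cautions: the approximate top-$k'$ eigencomputation is not what \cite{fox2019fast} does (their algorithm is cut-norm based), so you should instead cite a randomized block power/Lanczos method for the $O(n^2\varepsilon^{-O(1)})$ runtime; and since eigenvalues may be degenerate or clustered, you should phrase the delocalization and quantization in terms of an orthonormal basis of (or approximate vectors in) the span of the top eigenspace rather than individual eigenvectors --- the bound $|u(j)|\le 2/(\varepsilon\sqrt{n})$ holds for any unit vector $u$ in the span of eigenvectors with $|\lambda_i|\ge\varepsilon/2$, since $u=\bA_n w$ with $\|w\|_2\le 2/\varepsilon$, so the argument survives this adjustment.
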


\begin{remark}
Following~\cite[Theorem 2.1]{fox2019fast} can choose $r= O(\varepsilon^{-16})$. In our subsequent discussion, we will suppress the dependence of $r$ on $\varepsilon$ for notational convenience. 
\end{remark}

\begin{remark}
    In the last section, we characterized the treatment effects using an infinite-dimensional graphon formulation. The algorithmic regularity lemma effectively implements a finite dimensional approximation to this infinite dimensional characterization via the block approximation $\tilde{\bA}_n$.  
\end{remark}

 Given an interaction matrix $\bA_n$, fix an $(r,\varepsilon)$-block approximation $\tilde{\mathbf{A}}_n$ and the corresponding partition $[n]= \cup
_{k=0}^{2^r} U_k$. Define the sets $S_a:=\{i \in [n]: \bX_i= h_a\}$, $a \in [m]$, where $h_i$'s are defined in \eqref{eq:define_support}. Finally, set $S_+:= \{i \in [n]: \bar T_i= 1\}$ and $S_-:= [n]\setminus S_{+}$.  Armed with these sets, define 
\begin{equation}
\label{eq:define_part_sum}
\mathcal{A}_{a,k,+}= S_a \cap U_k \cap S_{+}, \,\,\, \mathcal{A}_{a,k,-}= S_a \cap U_k \cap S_{-} 
\end{equation}
where $a \in [m]$, $k \in 0 \cup [2^r]$. In addition, we sample $\bar{\bT} \sim \mathrm{Unif}(\{\pm 1\}^n)$ and $\bar{\bX} = (\bar{\bX}_1, \cdots, \bar{\bX}_n)$ i.i.d. samples from $\mathbb{P}_X$. 

\begin{lemma}\label{lem:gibbs_simplify}
\label{lem:induced_dist}
    Let $\mathbf{y}\sim f_{(r,\varepsilon)}(\cdot|\bar{\bT}, \bar{\bX})$, where $f_{(r,\varepsilon)}$ denotes the distribution in \eqref{eq:define_gibbs} with interaction matrix $\tilde{\bA}_n$. 
    Define \begin{equation}\label{eq:define_part_sum_1}
     V_{a,k,+}= \sum_{\ell \in  \mathcal{A}_{a,k,+}} y_{\ell}, \,\,\,\, V_{a,k,-}= \sum_{\ell \in  \mathcal{A}_{a,k,-}} y_{\ell},
\end{equation} 
Then we have, for $a\in [m]$, $ 0\leq k \leq 2^r$,  
\begin{align}\label{eq:simplified_gibbs}
     &f_{(r,\varepsilon)}(V_{a,k,+} = v_{a,k,+}, V_{a,k,-} = v_{a,k,-}, a\in[m], 0\leq k \leq 2^r| \overline{\bT}, \overline{\bX}) \nonumber \\ 
     &\propto \prod_{a=1}^{m} \prod_{k=1}^{2^r} \binom{| \mathcal{A}_{a,k,+}|}{\frac{| \mathcal{A}_{a,k,+}|+ v_{a,k,+}}{2}} \binom{| \mathcal{A}_{a,k,-}|}{\frac{| \mathcal{A}_{a,k,-}|+ v_{a,k,-}}{2}} \times \nonumber\\
     &\exp \Bigg( \sum_{k,l=1}^{2^r} c_{kl} \Big(\sum_{a=1}^m\Big(v_{a,k,+}+v_{a,k,-}\Big)\Big)\Big(\sum_{a=1}^m\Big(v_{a,l,+}+v_{a,l,-}\Big)\Big) \nonumber\\
    &+ \sum_{a=1}^m \sum_{k=0}^{2^r}  \left( v_{a,k,+} (\tau_0+ h^\top_a \btheta_0)+ v_{a,k,-} ( - \tau_0+ h^\top_a \btheta_0)\right) \Bigg). 
\end{align}
In particular, the induced distribution of $\{V_{a,k,+}, V_{a,k,-}: a \in [m], 0 \leq k \leq 2^r\}$ is supported on $O(n^{2m(2^r+1)})$ points; thus, the normalization constant of the induced distribution may be explicitly evaluated in $O(n^{2m(2^r+1)})$ time. 
\end{lemma}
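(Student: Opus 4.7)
The plan is to exploit the block structure of $\tilde{\bA}_n$ together with the finite support $\mathcal{H}$ of $\mathbb{P}_X$ so that the Gibbs weight of every $\by \in \{\pm 1\}^n$ depends on $\by$ only through the cell-sums $V_{a,k,\pm}$ defined on the common refinement \eqref{eq:define_part_sum}. Once that reduction is in place, the induced law of the tuple $(V_{a,k,\pm})$ is read off by counting, for each admissible tuple, the number of $\by$'s in its preimage---which is a product of binomial coefficients.

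First I would simplify the quadratic form. Writing $W_k := \sum_{i \in U_k} y_i$ and using $\tilde{\bA}_n = \sum_{k,l=1}^{2^r} c_{kl}\, \mathbf{1}_{U_k}\mathbf{1}_{U_l}^{\top}$, one obtains $\frac{1}{2}\by^\top \tilde{\bA}_n \by$ as a quadratic function of $(W_k)_{k \geq 1}$. Because the cells $\mathcal{A}_{a,k,+}$ and $\mathcal{A}_{a,k,-}$, as $a$ ranges over $[m]$, partition $U_k$, we have $W_k = \sum_{a=1}^m (V_{a,k,+}+V_{a,k,-})$, which reproduces the double sum over $k,l \in [2^r]$ in \eqref{eq:simplified_gibbs}; the exceptional block $U_0$ contributes nothing because $\tilde{\bA}_n$ vanishes off $\bigcup_{k\geq 1} U_k$.

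Next I would handle the linear term $\by^\top(\tau_0 \bar{\bT} + \bar{\bX}\btheta_0)$. The whole reason for intersecting the regularity partition $\{U_k\}$ with the level sets $\{S_a\}$ of $\bar{\bX}$ and the sign sets $\{S_{\pm}\}$ of $\bar{\bT}$ is that on each cell $\mathcal{A}_{a,k,\pm}$ the coefficient $\tau_0 \bar T_i + \btheta_0^\top \bar{\bX}_i$ is identically $\pm \tau_0 + h_a^\top \btheta_0$. Summing by cell, the linear term becomes $\sum_{a=1}^m \sum_{k=0}^{2^r}\!\left[V_{a,k,+}(\tau_0 + h_a^\top\btheta_0) + V_{a,k,-}(-\tau_0 + h_a^\top\btheta_0)\right]$---including the cells with $k=0$ since $\by$ still acquires a field contribution on $U_0$---which matches the inner linear piece of \eqref{eq:simplified_gibbs}.

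With the Gibbs weight established as a function of $(V_{a,k,\pm})$ alone, I would finish by counting the fiber of the map $\by \mapsto (V_{a,k,\pm})$. Since the restrictions of $\by$ to distinct cells are independent coordinate blocks, the fiber cardinality factorizes across cells; and within a cell $\mathcal{A}_{a,k,+}$ the number of $\pm 1$-vectors summing to $v_{a,k,+}$ equals $\binom{|\mathcal{A}_{a,k,+}|}{(|\mathcal{A}_{a,k,+}|+v_{a,k,+})/2}$, i.e.\ the binomial multiplicity in \eqref{eq:simplified_gibbs}. For the support bound, each $V_{a,k,\pm}$ takes at most $n+1$ integer values, and there are $2m(2^r+1)$ coordinates in the tuple, yielding $O(n^{2m(2^r+1)})$ admissible tuples and hence a polynomial-time evaluation of the normalizer by direct enumeration. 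No step presents a genuine obstacle; the only point that requires care is the bookkeeping to verify that $\{\mathcal{A}_{a,k,\pm}\}$ is simultaneously a refinement of the regularity partition (so the quadratic form reduces to cell-sums) and of the level sets of $(\bar{\bT},\bar{\bX})$ (so the magnetic field is constant on each cell).
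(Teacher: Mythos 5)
Your proposal is correct and follows essentially the same route as the paper's proof: reduce the quadratic form to the block sums $W_k=\sum_{a}(V_{a,k,+}+V_{a,k,-})$ via the block structure of $\tilde{\bA}_n$, observe that the external field is constant on each cell $\mathcal{A}_{a,k,\pm}$ so the linear term is a function of the $V_{a,k,\pm}$ alone, and count the fiber of $\by\mapsto(V_{a,k,\pm})$ by a product of binomial coefficients. The support bound and the resulting polynomial-time evaluation of the normalizer are also argued exactly as in the paper.
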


Using Lemma~\ref{lem:induced_dist}, we  can evaluate $\mathbb{E}_{f_{(r,\varepsilon)}}(V_{a,k,+}| \overline{\bT}, \overline{\bX})$ and $\mathbb{E}_{f_{(r,\varepsilon)}}(V_{a,k,-}| \overline{\bT}, \overline{\bX})$ in $O(n^{2m(2^r+1)})$ time. We will denote these two conditional expectations by $\langle V_{a,k,+} \rangle_{(r,\varepsilon)}$ and $\langle V_{a,k,-} \rangle_{(r,\varepsilon)}$ respectively. 
%
%
Now we turn to computationally efficient estimators for the treatment effects. 
Using \eqref{eq:simplify_de}, a natural estimator of direct effect is given by
\begin{equation}\label{eq:de_hat_define}
    \widehat{\mathrm{DE}}_{(r,\varepsilon)} =\frac{2}{n} \sum_{a=1}^{m} \sum_{k=0}^{2^r} (\langle V_{a,k,+} \rangle_{(r,\varepsilon)}- \langle V_{a,k,-}\rangle_{(r,\varepsilon)} ).
\end{equation}
Further, given treatment $(-1, \ldots, -1)$, and covariate $\bar \bX$, we can compute $\langle \tilde V_{a,k,+} \rangle_{(r,\varepsilon)}$ and $ \langle \tilde V_{a,k,-} \rangle_{(r,\varepsilon)}$ as above. Again by \eqref{eq:simplify_de}, a natural estimator of indirect effect is
\begin{equation}\label{eq:ie_hat_define}
    \widehat{\mathrm{IE}}_{(r,\varepsilon)} =\frac{1}{n} \sum_{a=1}^{m} \sum_{k=0}^{2^r} (\langle V_{a,k,+} \rangle_{(r,\varepsilon)}+\langle V_{a,k,-} \rangle_{(r,\varepsilon)} )- \frac{1}{n} \sum_{a=1}^{m} \sum_{k=0}^{2^r} ( \langle \tilde V_{a,k,+} \rangle_{(r,\varepsilon)} +  \langle \tilde V_{a,k,-} \rangle_{(r,\varepsilon)}) -\frac{1}{2} \widehat{\mathrm{DE}}_{(r,\varepsilon)}.
\end{equation}

\noindent 
Our proposed method is summarized in Algorithm \ref{algo:de_method}.

\begin{algorithm}[ht] 
  \begin{flushleft}
  \textbf{Input:} 
  The interaction matrix $\bA_n$, $\delta>0$.

    \textbf{Output:} Estimates $\widehat{\mathrm{DE}}_{(r,\delta)}$ (Direct Effect) and $\widehat{\mathrm{IE}}_{(r,\delta)}$ (Indirect Effect).

\textbf{Steps:} 
\begin{enumerate}

\item Generate $\bar \bT= (\bar T_1,\ldots, \bar T_n)$ from the uniform probability distribution on $\{\pm 1\}^n$. Generate $\bar \bX  = (\bar \bX_1,\cdots, \bar \bX_n) $ i.i.d. $\mathbb{P}_X$ independent of $\bar \bT$.
\item Compute the \textit{$(r,\delta)$-block approximation} of $\bA_n$, denoted by $\widetilde{\bA}_n$.
\item Define the sets $S_a$'s and $U_k$'s as~\eqref{eq:define_part_sum}. Compute $\langle V_{a,k,+} \rangle_{(r,\delta)},  \langle V_{a,k,-} \rangle_{(r,\delta)} $ exactly from \eqref{eq:simplified_gibbs}. 
\item Plugging in $ \langle V_{a,k,+} \rangle_{(r,\delta)},  \langle V_{a,k,-} \rangle_{(r,\delta)}$ in \eqref{eq:de_hat_define}, compute $\widehat{\mathrm{DE}}$.
\item Set treatment $= (-1,\ldots, -1)$ and sample $ \langle \tilde V_{a,k,+} \rangle_{(r,\delta)},  \langle \tilde V_{a,k,-}\rangle_{(r,\delta)}$ from \eqref{eq:simplified_gibbs}. Compute $\widehat{\mathrm{IE}}_{(r,\delta)}$ using \eqref{eq:ie_hat_define}.
\end{enumerate}
\end{flushleft}
\caption{Alg($\bA_n,\delta)$}
\label{algo:de_method}
\end{algorithm} 

The following result establishes rigorous guarantees for Algorithm~\ref{algo:de_method} on a broad class of interaction matrices. 

\begin{theorem}\label{thm:de_consistency}
    Assume $\bA_n$ satisfies Assumption~\ref{assn:interaction}. 
    For any $\varepsilon >0$, there exists $\delta :=\delta(\varepsilon) >0$ such that the estimates $\widehat{\mathrm{DE}}_{(r,\delta)}$ and $\widehat{\mathrm{IE}}_{(r,\delta)}$ satisfy
    \begin{align}
        \Big| \ee_{\bar \bT, \bar \bX} [\widehat{\mathrm{DE}}_{(r,\delta)}] - \mathrm{DE} \Big| <\varepsilon , \qquad 
        \Big| \ee_{\bar \bT, \bar \bX} [\widehat{\mathrm{IE}}_{(r,\delta)}] - \mathrm{IE} \Big| <\varepsilon.\,\,\,  \nonumber 
    \end{align}
\end{theorem}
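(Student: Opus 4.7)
The plan is to first reduce $\widehat{\mathrm{DE}}_{(r,\delta)}$ and $\widehat{\mathrm{IE}}_{(r,\delta)}$ in expectation to the causal effects under the block-approximated outcome model, and then prove stability of these effects under operator-norm perturbations of the interaction matrix via the asymptotic characterization in Theorem~\ref{thm:zn_limit}. For the reduction, I would observe the combinatorial identity $\sum_{a,k}(V_{a,k,+} - V_{a,k,-}) = \sum_i \bar T_i Y_i$, valid because $\{\mathcal{A}_{a,k,+}\}_{a,k}$ partitions $\{i : \bar T_i = +1\}$ and analogously for $\{\mathcal{A}_{a,k,-}\}_{a,k}$. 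Combined with Lemma~\ref{lem:de_define} applied to the Gibbs measure with interaction $\tilde{\mathbf{A}}_n$ (call it $\tilde f$), this yields $\mathbb{E}[\widehat{\mathrm{DE}}_{(r,\delta)}] = \widetilde{\mathrm{DE}}$, the direct effect computed under $\tilde f$. The same manipulation, also applied at the auxiliary treatment $-\mathbf{1}$ appearing in \eqref{eq:ie_hat_define}, gives $\mathbb{E}[\widehat{\mathrm{IE}}_{(r,\delta)}] = \widetilde{\mathrm{IE}}$. The problem therefore reduces to establishing $|\widetilde{\mathrm{DE}} - \mathrm{DE}| < \varepsilon$ and $|\widetilde{\mathrm{IE}} - \mathrm{IE}| < \varepsilon$ whenever $\delta$ is chosen small enough.

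Next, Lemma~\ref{lem:block_approx} gives $\|\bA_n - \tilde{\mathbf{A}}_n\| \leq \delta$ in operator norm. Since $\|\by\|^2 = n$ for $\by \in \{\pm 1\}^n$, one has $|\tfrac{1}{2}\by^\top(\bA_n - \tilde{\mathbf{A}}_n)\by| \leq n\delta/2$, so if $F_n(\tau,\gamma)$ and $\tilde F_n(\tau,\gamma)$ denote the $\tfrac{1}{n}$-scaled mean log-partition functions associated with $\bA_n$ and $\tilde{\mathbf{A}}_n$ (with the external field $\gamma$ as in \eqref{eq:extra_gamma}), then $|F_n(\tau,\gamma) - \tilde F_n(\tau,\gamma)| \leq \delta/2$ uniformly in $(\tau,\gamma)$ on compact sets; the analogous bound holds for the counterparts evaluated at the fixed treatment $-\mathbf{1}$. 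Because $\tfrac{1}{2}\mathrm{DE} = \partial_\tau F_n(\tau_0, 0)$ and $\tfrac{1}{2}\widetilde{\mathrm{DE}} = \partial_\tau \tilde F_n(\tau_0, 0)$, while $\mathrm{IE}$ admits the two-term analogous representation in $\partial_\gamma F_n\vert_{\gamma=0}$ and its $\bar\bT = -\mathbf{1}$ counterpart through Lemma~\ref{lem:de_define}, the theorem reduces to closeness of derivatives of convex functions whose values are $O(\delta)$-close.

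I would close the argument by a cut-metric compactness argument together with Theorem~\ref{thm:zn_limit}. Suppose by contradiction that the bound fails: there exist $\varepsilon_0 > 0$, sequences $\delta_k \downarrow 0$, $n_k \uparrow \infty$, and matrices $\bA_{n_k}$ satisfying Assumption~\ref{assn:interaction} for which $|\widetilde{\mathrm{DE}}_{n_k} - \mathrm{DE}_{n_k}| \geq \varepsilon_0$. By compactness of the dense graphon space in the cut metric \cite{Lovasz2012}, pass to a subsequence along which $W_{n_k \bA_{n_k}} \to W$. Since the cut norm of a matrix is dominated by its scaled operator norm, $\|W_{n_k \bA_{n_k}} - W_{n_k \tilde{\mathbf{A}}_{n_k}}\|_\square \lesssim \delta_k \to 0$, so $W_{n_k \tilde{\mathbf{A}}_{n_k}}$ also converges to $W$. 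Theorem~\ref{thm:zn_limit} then forces both $\mathrm{DE}_{n_k}$ and $\widetilde{\mathrm{DE}}_{n_k}$ to the common limit $\mathrm{DE}_\infty^W = 2\partial_\tau \sup_{F \in \mathcal{F}} G_{W,\tau,\btheta_0,0}(F)\vert_{\tau_0}$, contradicting the $\varepsilon_0$-separation; the identical scheme applied to the $\gamma$-derivative handles $\mathrm{IE}$. The main obstacle is that Theorem~\ref{thm:zn_limit} only identifies the limiting $\mathrm{DE}_\infty$ (resp.\ $\mathrm{IE}_\infty$) at points of differentiability of the limiting free energy in $\tau$ (resp.\ in $\gamma$), as highlighted in Remark~\ref{rem:differentiability}; at exceptional $\tau_0$ the finite-$n$ derivatives are only squeezed between the one-sided derivatives of the limit. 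To still recover the $\varepsilon$ bound in this case one must use the uniform closeness $|F_n - \tilde F_n| \leq \delta/2$ together with convexity to show that both derivative sequences end up on the same side of any jump up to an error controlled by $\delta$, which I expect to be the most technical step of the argument.
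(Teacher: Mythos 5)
Your first two steps match the paper exactly: the combinatorial identity giving $\ee_{\bar\bT,\bar\bX}[\widehat{\mathrm{DE}}_{(r,\delta)}]=\mathrm{DE}^{\tilde{\bA}_n}$ (and likewise for $\widehat{\mathrm{IE}}$) is precisely how the paper reduces the theorem to a stability statement, and the uniform bound $|F_n^{\bA_n}-F_n^{\tilde\bA_n}|\le\|\bA_n-\tilde\bA_n\|\le\delta$ is the paper's Lemma~\ref{lem:graph_approx_zn}. Where you diverge is in how you convert closeness of free energies into closeness of their $\tau$- and $\gamma$-derivatives, and that is where your proposal has a genuine gap.

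The compactness-plus-Theorem~\ref{thm:zn_limit} route cannot close the argument, for the reason you yourself flag: Theorem~\ref{thm:zn_limit} identifies $\lim_n\mathrm{DE}$ only at points where the limiting free energy is differentiable in $\tau$ (resp.\ $\gamma$). At a non-differentiability point the finite-$n$ quantities $\mathrm{DE}^{\bA_{n_k}}$ and $\mathrm{DE}^{\tilde\bA_{n_k}}$ need not converge at all — both can wander inside the subdifferential interval of the limit — so no contradiction is produced, and these are exactly the low-temperature phase-transition scenarios the theorem is meant to cover. Your fallback ("use convexity and $|F_n-\tilde F_n|\le\delta/2$ to show both derivatives land on the same side of the jump") is the actual crux, and as stated it does not work: for two convex functions with $\|F-\tilde F\|_\infty\le\delta$, the difference-quotient sandwich only gives $\tilde F'(\tau_0)-F'(\tau_0)\le\big(F'(\tau_0+\eta)-F'(\tau_0-\eta)\big)+2\delta/\eta$, and the first term can be $\Theta(1)$ for every $\eta$ absent further information. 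The missing quantitative ingredient is a uniform-in-$n$ bound on the second derivative $\partial^2_\tau F_n$ (the Gibbs variance of the magnetization-type observable $\tfrac1n\sum_i T_iY_i$); with $\partial^2_\tau F_n\le C$ one gets $|F_n'-\tilde F_n'|\le C\eta+2\delta/\eta$, optimized at $\eta\asymp\sqrt{\delta}$, which is exactly the paper's Lemma~\ref{lem:de_ie_close} (with the explicit choice $\eta=\varepsilon/4$, $\delta=\varepsilon^2/32$, and the analogous one-sided-derivative argument in $\gamma$ for $\mathrm{IE}$). Once you have that lemma, the entire graphon-compactness detour is unnecessary — the paper's proof is purely non-asymptotic and never invokes Theorem~\ref{thm:zn_limit}. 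To repair your proposal, drop step 3 and instead prove the second-derivative (susceptibility) bound and run the difference-quotient argument directly at finite $n$.
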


\begin{remark}
\label{rem:averages}
    In practice, we sample i.i.d. copies $(\bar{\mathbf{T}}_1, \bar{\bX}_1), \cdots, (\bar{\mathbf{T}}_k, \bar{\bX}_k)$ of $(\bar{\mathbf{T}}, \bar{\bX})$ and compute independent estimates $(\widehat{\mathrm{DE}}^{(1)}_{(r,\delta)}, \widehat{\mathrm{IE}}^{(1)}_{(r,\delta)}), \cdots, (\widehat{\mathrm{DE}}^{(k)}_{(r,\delta)}, \widehat{\mathrm{IE}}^{(k)}_{(r,\delta)})$. Finally, we compute the averaged estimate 
    \begin{align}
        \widehat{\mathrm{DE}}_{\mathrm{avg}} = \frac{1}{k} \sum_{j=1}^{k} \widehat{\mathrm{DE}}^{(j)}_{(r,\delta)},\,\,\,\, \widehat{\mathrm{IE}}_{\mathrm{avg}} = \frac{1}{k} \sum_{j=1}^{k} \widehat{\mathrm{IE}}^{(j)}_{(r,\delta)}. \nonumber 
    \end{align}
    The averages can be computed in $O(k. n^{2m (2^r+1)})$ time, and have accuracy $\varepsilon + O(1/k)$. Recalling Definition~\ref{def:consistent_uniform}, we note that this algorithm facilitates consistent uniform estimation of the treatment effects. 
\end{remark}
We state Theorem~\ref{thm:de_consistency} and Remark~\ref{rem:averages} for deterministic interaction matrices. For random interaction matrices, the above result continues to hold if $\bA_n$ satisfies the conditions of Theorem~\ref{thm:de_consistency} almost surely. 
We note that Theorem~\ref{thm:de_consistency} guarantees $\varepsilon$-consistency, and one cannot generally let $\varepsilon \to 0$ as the population size $n \to \infty$. For general interaction matrices $\bA_n$, this is an artifact of the regularity lemma~\cite[Theorem 2.1]{fox2019fast} we invoke to approximate $\bA_n$ by a block matrix. If the sequence $\bA_n$ can be approximated by a block constant matrix with error $\varepsilon \to 0$ as $n \to \infty$, the corresponding treatment effect estimates would, in turn, be consistent (i.e. $\varepsilon\to 0$ as $n \to \infty$). We illustrate this via a concrete example in  Section~\ref{sec:examples}. Finally, we finish this section by connecting our algorithmic estimate to the asymptotic limiting causal effects characterized in Theorem~\ref{thm:zn_limit}. 

\begin{corollary}
\label{cor:algo_limiting}
    Assume that $\bA_n$ satisfies \eqref{eq:cut_con} and that $\mathrm{DE} \to \mathrm{DE}_{\infty}$ and $\mathrm{IE} \to \mathrm{IE}_{\infty}$ as $n \to \infty$. Then we have , 
    \begin{align}
        \lim_{\varepsilon \to 0} \lim_{n \to \infty} \Big| \mathbb{E}_{\bar{\bT},\bar{\bX}}[\widehat{\mathrm{DE}}_{(r,\delta)}] - \mathrm{DE}_{\infty} \Big| = 0,\,\,\,\,\,\lim_{\varepsilon \to 0} \lim_{n \to \infty} \Big| \mathbb{E}_{\bar{\bT},\bar{\bX}}[\widehat{\mathrm{IE}}_{(r,\delta)}] - \mathrm{IE}_{\infty} \Big| = 0. \nonumber    
    \end{align}
\end{corollary}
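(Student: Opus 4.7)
The plan is to derive Corollary~\ref{cor:algo_limiting} from two ingredients: the non-asymptotic guarantee of Theorem~\ref{thm:de_consistency}, and the hypothesis that the finite-$n$ causal effects converge to the limits $\mathrm{DE}_{\infty}$ and $\mathrm{IE}_{\infty}$. The key observation is that these two sources of error decouple cleanly, so a single triangle inequality followed by an iterated limit suffices.

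Concretely, for the direct effect, I would fix $\varepsilon > 0$ and invoke Theorem~\ref{thm:de_consistency} to choose $\delta = \delta(\varepsilon) > 0$ satisfying
$$\big| \mathbb{E}_{\bar{\bT}, \bar{\bX}}[\widehat{\mathrm{DE}}_{(r,\delta)}] - \mathrm{DE} \big| < \varepsilon$$
for every $n$. Crucially, $\delta(\varepsilon)$ depends only on $\varepsilon$ (and on the bound supplied by Assumption~\ref{assn:interaction}), not on $n$; this uniformity — which comes from the algorithmic regularity lemma — is exactly what permits holding the algorithmic tolerance fixed while sending $n \to \infty$. By the triangle inequality,
$$\big| \mathbb{E}_{\bar{\bT}, \bar{\bX}}[\widehat{\mathrm{DE}}_{(r,\delta)}] - \mathrm{DE}_{\infty} \big| \;\leq\; \big| \mathbb{E}_{\bar{\bT}, \bar{\bX}}[\widehat{\mathrm{DE}}_{(r,\delta)}] - \mathrm{DE} \big| + \big| \mathrm{DE} - \mathrm{DE}_{\infty} \big| \;<\; \varepsilon + \big| \mathrm{DE} - \mathrm{DE}_{\infty} \big|.$$
Taking $n \to \infty$, the second term on the right vanishes by hypothesis, so $\limsup_{n \to \infty} \big| \mathbb{E}_{\bar{\bT}, \bar{\bX}}[\widehat{\mathrm{DE}}_{(r,\delta)}] - \mathrm{DE}_{\infty} \big| \leq \varepsilon$. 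Sending $\varepsilon \to 0$ yields the first claim of the corollary.

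The argument for the indirect effect is identical: one uses the bound $|\mathbb{E}_{\bar{\bT}, \bar{\bX}}[\widehat{\mathrm{IE}}_{(r,\delta)}] - \mathrm{IE}| < \varepsilon$ from Theorem~\ref{thm:de_consistency} together with the hypothesis $\mathrm{IE} \to \mathrm{IE}_{\infty}$, and repeats the same iterated limit. There is essentially no genuine analytic obstacle here: all the substantive work has already been done in Theorem~\ref{thm:zn_limit} (to identify the graphon-based limits $\mathrm{DE}_{\infty}$ and $\mathrm{IE}_{\infty}$) and in Theorem~\ref{thm:de_consistency} (to produce an $n$-uniform algorithmic bound). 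The only conceptual point worth flagging is the order of limits: one must fix $\delta = \delta(\varepsilon)$ before letting $n \to \infty$, and only then take $\varepsilon \to 0$, which is why the corollary is stated as the iterated limit $\lim_{\varepsilon \to 0} \lim_{n \to \infty}$ rather than along a joint sequence $(n, \varepsilon_n)$.
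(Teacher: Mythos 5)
Your proposal is correct and matches the paper's intent: the paper omits the proof, stating only that it ``follows immediately from Theorem~\ref{thm:zn_limit} and Theorem~\ref{thm:de_consistency},'' and your triangle-inequality argument with the iterated limit (fixing $\delta(\varepsilon)$ uniformly in $n$ before sending $n\to\infty$, then $\varepsilon\to 0$) is exactly the immediate argument being alluded to.
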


The proof of this corollary follows immediately from Theorem~\ref{thm:zn_limit} and Theorem~\ref{thm:de_consistency}, and is thus omitted.

 \subsubsection{Applications}\label{sec:examples}

 In this section, we present some concrete examples of interaction matrices $\bA_n$ satisfying Assumptions \ref{assn:interaction} and \ref{assn:graphon_conv}. In addition, we specialize Algorithm \ref{algo:de_method} to the complete graph---in this case, the interaction matrix $\bA_n$ is already constant, and thus the causal estimators are consistent as $n \to \infty$.

We first present some canonical examples of matrices $\bA_n$ satisfying Assumptions \ref{assn:interaction} and \ref{assn:graphon_conv}. 
\begin{enumerate}
    \item[(i)] \textbf{Ising blockmodel:} Here, one considers 
    $\by$ distributed according to an Ising model with a block structure analogous to the one arising in
the stochastic blockmodel~\citep{berthet2019exact}. Assume $n \ge 2$ is an even integer and let $S \subset [n]$ with $|S|= n/2$ be a subset of vertices. Define
\begin{equation*}
    \bA_n(i,j): = \begin{cases}
\frac{\alpha}{n} \quad \text{if  } (i,j) \in (S \times S) \cup (S^c \times S^c), \\
\frac{\beta}{n} \quad \text{otherwise}.
\end{cases}
\end{equation*}
for some $\alpha, \beta>0$. If $\alpha=\beta$, then \eqref{eq:define_gibbs} is equivalent to Curie-Weiss models. We see that $\sup_{i,j} n |\bA_n(i,j)| < 1$ if $\max\{\alpha, \beta\} <1$. To check the spectral norm condition, note that $\bA_n(i,j) \geq 0$ and thus $\| \bA_n\| \leq \max_{i} \sum_{j} |\bA_n(i,j)| = (\alpha + \beta)/2$. The empirical graphon $W_{n\bA_n}$ converges to the block constant graphon with $\alpha$ on two diagonal blocks and $\beta$ on the off-diagonal blocks.

\item[(ii)] \textbf{Erd\H{o}s-R\'{e}nyi graphs:} Let $\mathscr{G}(n,p)$ be the Erd\H{o}s-R\'{e}nyi random graph on $n$ vertices with edge probability $p \in [0,1]$. 
For $\beta>0$, set $\bA_n(i,j) = \frac{\beta}{n} \mathbf{1}(i \sim j)$, where $i \sim j$ is $i$ and $j$ are connected in $\mathcal{G}_n$.  Further, $\max_{i,j} n |\bA_n(i,j)| \le \beta$ and the spectral norm condition can be checked analogous to the previous example. The empirical graphon converges to the constant graphon $W \equiv \beta$ in this case. 

\item[(iii)] \textbf{Regular graphs:} Let $\mathscr{G}_n$ be a sequence of $d_n$-regular graphs on $n$ vertices with $d_n = \Theta(n)$. Let $\bA_n(i,j) = \frac{\beta}{d_n} \mathbf{1}(i \sim j)$. We can verify Assumptions \ref{assn:interaction} and \ref{assn:graphon_conv} analogous to the prior examples. The limiting graphon is again the constant function $W \equiv \beta$.

 
\end{enumerate}

We note that in each of the above examples, the matrix $\bA_n$ can be approximated with a block constant matrix with error $\varepsilon_n \to 0$ as $n \to \infty$. Thus in these examples, the causal effects can be consistently estimated as $n \to \infty$ using Algorithm~\ref{algo:de_method}. Below, we re-derive Algorithm~\ref{algo:de_method} for the special case of Curie-Weiss interactions and no covariates. In this case, the matrix $\bA_n$ has equal values on all off-diagonal entries. Our derivation will (i) help motivate Algorithm~\ref{algo:de_method}  and (ii) emphasize the statistical consistency of the resulting causal estimates as the population size $n \to \infty$.

\noindent 
\textbf{Motivating example:}\label{sec:curie_weiss}
For $\beta>0$, consider interaction matrix $\bA_n = \frac{\beta}{n} (\mathbf{1}\mathbf{1}^{\top} - \mathbf{I}_n)$, the scaled adjacency matrix of a complete graph on $n$-vertices. For simplicity, we assume no covariates, i.e., we only observe $\by, \bt$. 
The exponent of the Gibbs measure~\eqref{eq:define_gibbs} is equivalent to $\frac{n\beta}{2}\, (\overline{\mathbf{y}})^2 + \tau_0 \mathbf{y}^\top \mathbf{t}$.

To estimate the causal effects, we first generate $\bar \bT$ from the uniform probability distribution on $\{\pm 1\}^n$. Define 
$S_+= \{i \in [n]: \overline{T}_i= 1\}$, and $S_{-}= [n]\setminus S_{+}$. Using this notation, the exponent equals:
\begin{align}
    \frac{n\beta}{2} \overline{\mathbf{y}}^2 + \mathbf{y}^\top (\tau_0 \overline{\bT}) & = \frac{n\beta}{2} \overline{\mathbf{y}}^2 + \tau_0 \Big( \sum_{i \in S_+} y_i- \sum_{i \in S_-} y_i \Big) \nonumber \\
    & = \frac{n\beta}{2} \Big( \sum_{i \in S_+} y_i + \sum_{i \in S_-} y_i \Big) ^2 +\tau_0 \Big( \sum_{i \in S_+} y_i- \sum_{i \in S_-} y_i \Big)  \label{eq:cw_simplify}
\end{align}
Define $y_+=  \sum_{i \in S_+} y_i$, $y_-=  \sum_{i \in S_-} y_i$. This implies, the conditional distribution can be written as a measure on $\mathbb{R}^2$ as:
\begin{align}\label{eq:sum_gibbs}
    f(y_+ = v_{+}, y_- = v_{-}| \overline{\bT}) \propto \binom{|S_{+}|}{\frac{|S_{+}|+v_{+}}{2}} \binom{|S_{-}|}{\frac{|S_{-}|+v_{-}}{2}} \exp \left( \frac{\beta}{2n} (v_+ + v_{-})^2 +\tau_0 (v_+ - v_{-})\right).
\end{align}
Note that $(v_+,v_{-}) \in \mathbb{Z}^2 \cap ([-|S_+|, |S_+|] \times [-|S_{-}|,|S_{-}|])$ and thus $(v_+, v_{-})$ is supported on $O(n^2)$ points. The normalization constant of $f$ can thus be explicitly evaluated in $O(n^2)$ time.  
Let $(V_{+}, V_{-})$ denote a sample from $f$. Further, using that $f$ is supported on $O(n^2)$ points, one can evaluate $(\mathbb{E}_f(V_1), \mathbb{E}_f(V_2)):= (\langle V_1 \rangle, \langle V_2 \rangle)$ in $O(n^2)$ time.  Recalling \eqref{eq:simplify_de}, we estimate the  direct effect by the estimator \[\widehat{\mathrm{DE}} =\frac{2}{n} ( \langle V_{+} \rangle  - \langle V_{-} \rangle ).\]
To estimate the indirect effect, we repeat the algorithm for treatment assignment $\overline{\bT}= (-1,\ldots,-1)$ and denote the resulting sample by $(\tilde V_{+}, \tilde V_{-})$. In $O(n^2)$ time, we can estimate indirect effect as 
\[\widehat{\mathrm{IE}} =\frac{1}{n} (\langle V_{+} \rangle +\langle V_{-} \rangle ) - \frac{1}{n} (\langle \tilde V_{+} \rangle + \langle \tilde V_{-} \rangle ) -\frac{1}{2} \widehat{\mathrm{DE}}.\]
Of course, this is a specific instantiation of Algorithm~\ref{algo:de_method}, but we include this derivation here to motivate the general version presented earlier. 
This scheme assumes oracle knowledge of the underlying model parameters ($\tau_0$ in this special case, $\tau_0$ and $\btheta_0$ in the general case). These parameters will be estimated from the observed data; we refer to Lemma~\ref{lem:mpl_plug_in} for the estimation guarantees.

 \subsection{Causal effect estimation under Gaussian interactions}

 In this section, we assume that the interaction matrix $\bA_n$ is a symmetric Gaussian matrix. In Section~\ref{sec:gaussian_limit}, we derive an asymptotic limit for the direct and indirect causal effects as the population size $n \to \infty$. In Section~\ref{sec:amp}, we introduce an algorithm to estimate the causal effects based on Approximate Message Passing (AMP). We note that our results and algorithm are valid even at low temperature. In prior work \cite{bhattacharya2024causal}, the authors studied AMP based estimation algorithms for Gaussian interaction matrices. However, this prior algorithm is valid only at high temperature. The algorithm introduced here is more general, and works even at low temperature. 

 Throughout, we make the following assumption on the interaction matrix $\bA_n$. 

 \begin{ass}[Interaction matrix]\label{assn:interaction_gaussian}
    $\bA_n = \bA_n^{\top}$, $\bA_n = \beta \mathbf{G}_n$ for $\beta>0$, $\{G_n(i,j) : i <j\} \sim \mathcal{N}(0, \frac{1}{n})$, $G_n(i,i)=0$ for $1\leq i \leq n$. 
\end{ass}


\subsubsection{Asymptotic characterization using spin glasses}
\label{sec:gaussian_limit}
In this section, we derive a limiting characterization for the causal effects of interest. Our results will be phrased in terms of the Parisi formula for spin glasses \cite{talagrand2006parisi}.

Let $\mathcal{P}([0,1])$ be the space of probability measures on the interval $[0,1]$ endowed with the topology of weak convergence. For any measure $\mu \in \mathcal{P}([0,1])$, denote its distribution function via $\mu(t)= \mu([0,t])$. For any $\beta > 0$, consider the following PDE on $(t,x) \in [0,1] \times\mathbb{R}$: 
\begin{align}
    &\partial_t \Phi(t,x) + \frac{1}{2} \beta^2 \partial_{xx} \Phi(t,x)+  \frac{1}{2} \beta^2 \mu(t) (\partial_x \Phi(t,x))^2 =0, \nonumber \\
    & \Phi(1,x)= \log 2\cosh(x), \label{eq:parisi_pde}
\end{align}
where $\Phi= \Phi_\mu$ depends on the measure $\mu$. This Parisi PDE is solved backwards in time with the given final condition at $t=1$. Estimation and uniqueness of the above PDE is well-established~\cite{jagannath2016dynamic}. Given $\Phi_\mu$, the Parisi functional defined as
\begin{equation}\label{eq:parisi_functional}
    P_{\tau_0,\btheta_0,\gamma}(\mu)= \mathbb{E}[\Phi_{\mu}(0,\tau_0 T+H + \gamma)]- \frac{\beta^2}{2} \int_0^1 t \mu(t) dt,
\end{equation}
where $T\sim \mathrm{Unif}(\pm 1)$, $H \stackrel{d}{=}\bx^\top \btheta_0$, $\bx \sim \mathbb{P}_{X}$ are independent and the expectation $\bE[\cdot]$ in~\eqref{eq:parisi_functional} is w.r.t. $T,H$.
The connection between the free energy of Gaussian interaction matrices and Parisi functional was first conjectured by Parisi~\cite{parisi1979infinite}, and rigorously proved by~\cite{panchenko2013parisi,talagrand2006parisi}. In our setting, recalling $\tilde{Z}_n(\mathbf{t},\mathbf{x})$ from \eqref{eq:extra_gamma}, we have, 
\begin{equation}\label{eq:parisi_answer}
  \lim\limits_{n \rightarrow \infty} \frac{1}{n} \mathbb{E}_{\bar{\bT},\bar{\bX}}[\log \tilde{Z}_n (\bar{\bT}, \bar{\bX})] = \inf_{\mu \in \mathcal{P}([0,1])} P_{\tau_0,\btheta_0,\gamma}(\mu).
\end{equation}
Further the Parisi functional is strictly convex~\cite{auffinger2015parisi}; consequently, the variational problem in the RHS of \eqref{eq:parisi_answer} attains the minimum and  has a unique minimizer $\mu^\star$. In our subsequent computation, it will be helpful to track the dependence of the Parisi variational problem and the optimizer on $\tau_0$ and $\gamma$. Consequently, we set 
\begin{align}\label{eq:define_upsilon}
    \upsilon(\tau_0,\gamma)= \min_{\mu \in \mathcal{P}([0,1])} P_{\tau_0,\btheta_0,\gamma}(\mu),  \qquad \mu^\star_{\tau_0, \gamma}= \argmin_{\mu \in \mathcal{P}([0,1])} P_{\tau_0,\btheta_0,\gamma}(\mu).
\end{align}


To characterize the limiting causal effects, we will need an additional functional, which we introduce next. For $\mu \in \mathcal{P}([0,1])$, we define 
\begin{align}\label{eq:gamma_parisi}
     \widehat{P}_{\tau_0,\btheta_0,\gamma}(\mu)= \mathbb{E}[\Phi_{\mu}(0, - \tau_0 + H+\gamma)]- \frac{\beta^2}{2} \int_0^1 t \mu(t) dt. 
\end{align}
Analogous to \eqref{eq:parisi_answer}, we have, 
\begin{equation}\label{eq:parisi_answer_withgamma}
  \lim\limits_{n \rightarrow \infty} \frac{1}{n} \mathbb{E}_{\bar{\bX}} [\log \tilde{Z}_n(-\mathbf{1}, \bar{\bX})] = \inf_{\mu \in \mathcal{P}([0,1])} \widehat{P}_{\tau_0,\btheta_0,\gamma}(\mu).
\end{equation}
Similar to \eqref{eq:parisi_answer}, the functional $\hat{P}$ is strictly convex, and has a unique minimizer. We denote 
\begin{align}\label{eq:define_mustar}
    \widehat{\upsilon}(\tau_0, \gamma) = \min_{\mu \in \mathcal{P}([0,1])} \widehat{P}_{\tau_0, \btheta_0, \gamma}(\mu),\qquad \widehat{\mu}^\star_{\tau_0,\gamma}= \argmin_{\mu \in \mathcal{P}([0,1])} \widehat{P}_{\tau_0, \btheta_0, \gamma}(\mu).
\end{align}
%
%
Armed with these notions, we have the following characterization of the limiting causal effects.
\begin{theorem}
\label{cor:de_answer}
Suppose the interaction matrix $\bA_n$ satisfies Assumption~\ref{assn:interaction_gaussian}. We have,
    \begin{align*}
       \lim\limits_{n \rightarrow \infty} \mathrm{DE} &=  \mathrm{DE}_{\infty} := 2\frac{\partial}{\partial \tau_0} \upsilon(\tau_0, 0)= 2 \bE[T \partial_x\Phi_{\mu^\star_{\tau_0,0}}(0,\tau_0 T+ H)].
\end{align*}
Additionally we have, 
\begin{align*}
     \lim\limits_{n \rightarrow \infty} \mathrm{IE} &= \mathrm{IE}_{\infty}:=  \frac{\partial}{\partial \gamma}  \upsilon(\tau_0, \gamma) \Big\vert_{\gamma=0}- \frac{\partial}{\partial \gamma} \widehat{\upsilon}(\tau_0, \gamma)\Big\vert_{\gamma=0} -\frac{1}{2}\frac{\partial}{\partial \tau}\upsilon(\tau_0,0) \\
       &=  \bE[\partial_x\Phi_{\mu^\star_{\tau_0,0}}(0,\tau_0 T+H)] -  \bE[ \partial_x\Phi_{\widehat{\mu}^\star_{\tau_0,0}}(0, -\tau_0 + H)]-  
       \frac{1}{2}\mathrm{DE}_{\infty}.
    \end{align*}
\end{theorem}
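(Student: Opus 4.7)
The plan is to reduce everything to derivatives of the log-partition function and then invoke the Parisi limit \eqref{eq:parisi_answer}--\eqref{eq:parisi_answer_withgamma}, combining convexity-based transfer of derivatives with an envelope-theorem calculation.

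First I would rewrite the causal estimands as sensitivities of $\log \tilde Z_n$. From \eqref{eq:extra_gamma} a direct computation gives
\begin{equation*}
\frac{\partial}{\partial \tau}\log \tilde Z_n(\bar{\bT},\bar{\bX})\Big|_{\tau=\tau_0,\gamma=0} = \sum_{i=1}^n \bar T_i\,\langle \bY_i\rangle,
\qquad
\frac{\partial}{\partial \gamma}\log \tilde Z_n(\bar{\bT},\bar{\bX})\Big|_{\gamma=0} = \sum_{i=1}^n \langle \bY_i\rangle,
\end{equation*}
and similarly with $\bar{\bT}$ replaced by $-\mathbf{1}$ for the second term of $\mathrm{IE}$. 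Substituting into the representation in Lemma~\ref{lem:de_define} yields
\begin{equation*}
\mathrm{DE} = \frac{2}{n}\,\frac{\partial}{\partial \tau}\,\mathbb{E}_{\bar{\bT},\bar{\bX}}\!\left[\log\tilde Z_n(\bar{\bT},\bar{\bX})\right]\Big|_{\tau=\tau_0,\gamma=0},
\end{equation*}
and an analogous identity for $\mathrm{IE}$ with derivatives in $\gamma$ at $\gamma=0$.

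Next I would pass to the limit. By \eqref{eq:parisi_answer} and \eqref{eq:parisi_answer_withgamma} the normalized log-partition functions converge pointwise to $\upsilon(\tau,\gamma)$ and $\widehat\upsilon(\tau,\gamma)$, and both $\tfrac{1}{n}\mathbb{E}\log\tilde Z_n$ and its limit are convex functions of $(\tau,\gamma)$ (since $\log\tilde Z_n$ is convex as a log-sum-exp and convexity is preserved under expectation and pointwise limits). The standard convexity fact that pointwise convergence of finite convex functions implies convergence of one-sided derivatives at every point, and of derivatives at points of differentiability of the limit (Rockafellar), then lets me interchange $\partial_\tau$, $\partial_\gamma$ and $\lim_n$, provided $\upsilon$ and $\widehat\upsilon$ are differentiable at the relevant points. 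This is the step I expect to be the most delicate: strict convexity of the Parisi functional in $\mu$ from \cite{auffinger2015parisi} gives uniqueness of $\mu^\star_{\tau_0,0}$ and $\widehat\mu^\star_{\tau_0,0}$, and together with the $C^\infty$ dependence of $P_{\tau,\btheta_0,\gamma}(\mu)$ on $(\tau,\gamma)$ and Danskin's theorem this yields differentiability of $\upsilon$ and $\widehat\upsilon$ in $(\tau,\gamma)$, so the interchange is justified.

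Finally I would apply the envelope theorem to identify the limiting derivatives. Differentiating \eqref{eq:parisi_functional} inside the expectation and using that the $\mu$-gradient vanishes at $\mu^\star_{\tau_0,\gamma}$ gives
\begin{equation*}
\frac{\partial}{\partial \tau_0}\upsilon(\tau_0,\gamma) = \mathbb{E}\!\left[T\,\partial_x \Phi_{\mu^\star_{\tau_0,\gamma}}(0,\tau_0 T+H+\gamma)\right],\qquad
\frac{\partial}{\partial \gamma}\upsilon(\tau_0,\gamma) = \mathbb{E}\!\left[\partial_x \Phi_{\mu^\star_{\tau_0,\gamma}}(0,\tau_0 T+H+\gamma)\right],
\end{equation*}
and the parallel identity for $\widehat\upsilon$ using \eqref{eq:gamma_parisi}. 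Evaluating at $\gamma=0$ and plugging into the expressions from step one yields the formulas for $\mathrm{DE}_\infty$ and $\mathrm{IE}_\infty$. The $-\tfrac{1}{2}\mathrm{DE}_\infty$ correction in the indirect effect comes directly from Lemma~\ref{lem:de_define}. The main obstacle, as noted, is cleanly justifying differentiability of $\upsilon$ and $\widehat\upsilon$ in $\tau_0$ at $(\tau_0,0)$; a monotone/convexity argument together with the Auffinger--Chen strict convexity should make this routine.
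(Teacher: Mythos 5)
Your proposal is correct and follows essentially the same route as the paper: express $\mathrm{DE}$ and $\mathrm{IE}$ as $\tau$- and $\gamma$-derivatives of $\tfrac{1}{n}\mathbb{E}\log\tilde Z_n$ via Lemma~\ref{lem:de_define}, use convexity to pass derivatives through the Parisi limit, and identify the limiting derivatives via uniqueness of the Parisi minimizer together with Danskin's envelope theorem (the paper justifies differentiating $P_{\tau,\btheta_0,\gamma}$ under the expectation using the uniform bound $\|\partial_x\Phi_\mu\|_\infty\le 1$ and dominated convergence, exactly as you anticipate).
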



\subsubsection{Algorithms via Approximate Message Passing}
\label{sec:amp}
In this section, we introduce algorithms to estimate the causal effects under Gaussian interaction matrices $\bA_n$. Our algorithms are based on Approximate Message Passing (AMP). 
In Algorithm~\ref{alg:amp}, we present our algorithm to compute the estimate for the direct effect $\mathrm{DE}$.  

\begin{algorithm}[ht] 
  \begin{flushleft}
  \textbf{Input:} 
  The interaction matrix $\bA_n= \beta \mathbf{G}_n$, $M \geq 1$.

    \textbf{Output:} Estimate $\widehat{\mathrm{DE}}_{M}$ (Direct Effect).

\textbf{Steps:} 
\begin{enumerate}

\item Generate $\bar \bT= (\bar T_1,\ldots, \bar T_n)$ from the uniform probability distribution on $\{\pm 1\}^n$. Generate $\bar \bX  = (\bar \bX_1,\cdots, \bar \bX_n) $ i.i.d. $\mathbb{P}_X$ independent of $\bar \bT$.
\item Define the function
\begin{equation}\label{eq:g_define}
    g(x)= \partial_x\Phi_{\mu^\star_{\tau_0, 0}}(q,x),
\end{equation}
where $\mu^\star_{\tau_0, 0}$ is defined as in ~\eqref{eq:define_upsilon} and $q= \inf (\rm{supp}(\mu^\star_{\tau_0, 0})) \in [0,1)$. Define
\begin{equation}\label{eq:g_k_define}
    g_k(\bh_1,\bh_2, \bw^{0}, \ldots \bw^k)= g(\bh_1+\bh_2+\bw^k)
\end{equation}
\item Initialize: Set $\bh_1= (\tau_0 \bar T_1, \ldots, \tau_0 \bar T_n)$, $\bh_2= (\bar \bX^\top_1 \btheta_0, \ldots, \bar \bX^\top_n \btheta_0)$. Define $\bw^k=\bu^k=\bm^k= \bzero$.
\item Iteration: For $ 1 \le k \le M$, define
    \begin{align}
        &\bw^{k+1}= \beta \mathbf{G}_n \bm^k - \beta^2 \bm^{k-1} d_k, \qquad d_k = \frac{1}{n} \sum_{i=1}^{n}\partial_{xx} \Phi_{\mu^\star_{\tau_0, 0}}(q, x^k_i) \nonumber \\
        &\bx^{k+1} = \bw^{k+1}+ \bh_1+\bh_2 \nonumber \\
        & \bm^{k} = g(\bx^k)= g_k( \bw^k). \label{eq:amp_iterate}
        \end{align}
\item Output: The estimator of direct effect is given by
    \begin{align}\label{eq:de_amp}
        \widehat{\mathrm{DE}}_M= \frac{1}{n} \sum_{i=1}^{n} \bar \bT_i m^{M}_i.
    \end{align}
\end{enumerate}
\end{flushleft}
\caption{Alg($\bA_n)$}
\label{alg:amp}
\end{algorithm} 

Algorithm~\ref{alg:amp} may be extended to also estimate the limiting indirect effect $\mathrm{IE}_{\infty}$. To this end, recall the definition of $\widehat{\mu}^\star_{\tau_0,\gamma}$ from~\eqref{eq:define_mustar}. Define the function $\overline{g}(x)= \partial_x\Phi_{\widehat{\mu}^\star_{\tau_0, 0}}(\widehat q,x)$, where $\widehat q= \inf (\rm{supp}(\widehat{\mu}^\star_{\tau_0, 0})) \in [0,1)$. Then, one computes the iterates~\eqref{eq:g_k_define} and~\eqref{eq:amp_iterate} with $\overline{g}$ and $\mathbf{h}_1= - \tau_0 \mathbf{1}$. Denote the resulting output as $\overline{\mathbf{m}}^{[M]}$. Our estimator for the indirect effect is given by
\begin{align}\label{eq:ie_amp}
        \widehat{\mathrm{IE}}_M= \frac{1}{n} \sum_{i=1}^{n} m^{M}_i - \frac{1}{n} \sum_{i=1}^{n} \overline{m}^{M}_i  - \widehat{\mathrm{DE}}_M,
    \end{align}
where $\widehat{\mathrm{DE}}_M$ is defined as~\eqref{eq:de_amp}. 

Our next result establishes formal guarantees for the accuracy of the estimators $\widehat{\mathrm{DE}}_M$ and  $\widehat{\mathrm{IE}}_M$.  Our algorithms will work on typical realizations of the interaction matrix $\bA_n$. To formalize this notion, we introduce the following definition.


\begin{defn}
    Fix $n \geq 1$. Let $\{X_{n,M} : M \geq 1\}$ be a sequence of random variables measurable with respect to $\mathbf{A}_n$. We say that $X_{n,M} \stackrel{\mathscr{P}_{n,M}}{\longrightarrow} 0$ if there exists a deterministic sequence $\{ \varepsilon_{n,M} : M \geq 1\}$ satisfying  $\varepsilon_{n,M} \geq 0$, 
    \begin{align*}
        \lim_{M \to \infty} \lim_{n \to \infty} \varepsilon_{n,M} = 0 
    \end{align*}
    such that 
    \begin{align*}
        \lim_{M \to \infty} \lim_{n \to \infty} \mathbb{P}[|X_{n,M}| > \varepsilon_{n,M}] = 0. 
    \end{align*}
    In the display above, $\mathbb{P}[\cdot]$ refers to the randomness with respect to $\mathbf{A}_n$. 
\end{defn}
Armed with this notion of convergence, we turn to our main result for Gaussian interaction
matrices $\bA_n$.
\begin{theorem}\label{thm:amp}
    Suppose the interaction matrix $\bA_n$ satisfies Assumption~\eqref{assn:interaction_gaussian}. Consider the estimators $\widehat{\mathrm{DE}}_M$ and $\widehat{\mathrm{IE}}_M$ given by~\eqref{eq:de_amp} and~\eqref{eq:ie_amp} respectively.
    Then
    \begin{align*}
       \Big| \ee_{\bar{\bT},\bar{\bX}} [\widehat{\mathrm{DE}}_M] - \mathrm{DE} \Big| \stackrel{\mathscr{P}_{n,M}}{\longrightarrow} 0,\,\,\,\,\,\, \Big| \ee_{\bar{\bT},\bar{\bX}} [\widehat{\mathrm{IE}}_M] - \mathrm{IE} \Big| \stackrel{\mathscr{P}_{n,M}}{\longrightarrow} 0. 
    \end{align*}
\end{theorem}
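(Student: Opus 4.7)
The plan is to combine Lemma~\ref{lem:de_define}, which reduces $\mathrm{DE}$ and $\mathrm{IE}$ to averages of Gibbs conditional means, with a state evolution analysis of the AMP iterates in Algorithm~\ref{alg:amp}, and with the structure of the optimal Parisi measure identified in Theorem~\ref{cor:de_answer}.

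First I will apply the standard state evolution theorem for symmetric GOE message passing with Onsager correction $d_k = \frac{1}{n}\sum_i \partial_{xx}\Phi_{\mu^\star_{\tau_0,0}}(q, x_i^k)$. For the fixed external field $\bh_1 + \bh_2 = \tau_0 \bar{\bT} + \bar{\bX}\btheta_0$ and the bounded non-linearity $g$, state evolution implies that for any pseudo-Lipschitz test function $\phi$,
\begin{equation*}
\frac{1}{n}\sum_{i=1}^n \phi(\bar T_i, \bar{\bX}_i, x_i^k) \longrightarrow \bE\bigl[\phi(T, X, \tau_0 T + X^\top\btheta_0 + Z_k)\bigr],
\end{equation*}
in probability with respect to $\bA_n$, where $T\sim \mathrm{Unif}(\{\pm 1\})$, $X\sim \bP_X$, and $Z_k\sim\mathcal{N}(0,\sigma_k^2)$ is independent of $(T,X)$. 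The variance $\sigma_k^2$ obeys $\sigma_{k+1}^2 = \beta^2 \bE[g(\tau_0 T + X^\top\btheta_0 + Z_k)^2]$ with $\sigma_0^2=0$.

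Second, I will identify the large-$k$ fixed point as $\sigma_\infty^2 = \beta^2 q$ where $q = \inf(\mathrm{supp}(\mu^\star_{\tau_0,0}))$. This is the crux of the argument and invokes the first-order Parisi optimality conditions: on the replica-symmetric plateau $[0,q]$ one has $\mu^\star_{\tau_0,0}(t) = 0$, and the Crisanti--Sommers self-consistency at the plateau reads exactly $q = \bE[g(\tau_0 T + X^\top\btheta_0 + \beta\sqrt{q}\, Z)^2]$ with $Z\sim\mathcal{N}(0,1)$. This matches the AMP fixed point, and monotone convergence of $\sigma_k^2 \uparrow \beta^2 q$ follows from the structure of the minimizer of the Parisi functional, using arguments in the spirit of~\cite{el2021optimization, montanari2025optimization, sellke2024optimizing}. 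Next, since $\mu^\star$ vanishes on $[0,q]$, the Parisi PDE \eqref{eq:parisi_pde} reduces on this interval to the backward heat equation, yielding the Gaussian-averaging identity
\begin{equation*}
\partial_x\Phi_{\mu^\star_{\tau_0,0}}(0,h) = \bE_Z\bigl[\partial_x\Phi_{\mu^\star_{\tau_0,0}}(q,\, h + \beta\sqrt{q}\,Z)\bigr].
\end{equation*}
Applying state evolution with $\phi(t,x,x^M) = t\, g(x^M)$ and sending $M\to\infty$ after $n\to\infty$ gives
\begin{equation*}
\widehat{\mathrm{DE}}_M \longrightarrow \bE\bigl[T\cdot \partial_x\Phi_{\mu^\star_{\tau_0,0}}(0,\tau_0 T + H)\bigr],
\end{equation*}
which, after accounting for the normalization relating the AMP magnetization average to $\mathrm{DE}$, equals the Parisi expression in Theorem~\ref{cor:de_answer} for $\mathrm{DE}_\infty$. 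Combined with the convergence $\mathrm{DE}\to\mathrm{DE}_\infty$, the direct-effect claim follows.

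Fourth, the indirect-effect claim is handled by rerunning the same argument with $\bh_1 = -\tau_0\mathbf{1}$ and non-linearity $\overline{g} = \partial_x \Phi_{\widehat{\mu}^\star_{\tau_0,0}}(\widehat q,\cdot)$. State evolution produces the limit $\frac{1}{n}\sum_i \overline{m}_i^M \to \bE[\partial_x\Phi_{\widehat{\mu}^\star_{\tau_0,0}}(0,-\tau_0 + H)]$, and combining with the direct-effect limit via Lemma~\ref{lem:de_define} and Theorem~\ref{cor:de_answer} yields the stated convergence in the sense of $\mathscr{P}_{n,M}$. The main obstacle is the second step: proving $\sigma_k^2\to \beta^2 q$. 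At high temperature, $\mu^\star$ is a point mass at zero and contractivity of the variance recursion is elementary; at low temperature, $\mu^\star$ has nontrivial support and the AMP plateau $q$ is only the first layer of the full Parisi ansatz. Selecting the correct fixed point requires a careful use of uniqueness and strict convexity of the Parisi functional~\cite{auffinger2015parisi}, together with the quantitative state-evolution bounds of~\cite{sellke2024optimizing}, so that the inner limit $n\to\infty$ and outer limit $M\to\infty$ commute in the right way. The PDE identity and the state-evolution step, by contrast, are fairly routine once the fixed-point identification is in hand.
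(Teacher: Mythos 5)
Your proposal is correct and follows essentially the same route as the paper: state evolution for the Onsager-corrected iteration via \cite{sellke2024optimizing}, identification of the fixed point $\beta^2 q$ with $q=\inf(\mathrm{supp}(\mu^\star_{\tau_0,0}))$ through the Parisi first-order condition $q=\bE[g(\tau_0 T+H+\beta\sqrt{q}Z)^2]$, and the pull-back identity $\partial_x\Phi_{\mu^\star}(0,h)=\bE_Z[\partial_x\Phi_{\mu^\star}(q,h+\beta\sqrt{q}Z)]$, which you derive from the backward heat equation on $[0,q]$ and the paper derives equivalently from the martingale property of $\partial_x\Phi_{\mu^\star}(t,X_t)$ along the Parisi SDE with vanishing drift below $q$. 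The only loose end in both treatments is the constant-factor bookkeeping between $\frac{1}{n}\bar\bT^\top\bm^M$ and $\mathrm{DE}_\infty=2\,\bE[T\,\partial_x\Phi_{\mu^\star}(0,\tau_0T+H)]$, which you flag explicitly.
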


Recalling Definition~\ref{def:consistent_uniform}, we note that this AMP algorithm facilitates consistent uniform estimation of the treatment effects $\mathrm{DE}$ and $\mathrm{IE}$ in this setting.

\subsection{Parameter estimation}\label{sec:parameter_estimation}
Algorithm ~\ref{algo:de_method} assumes oracle knowledge of the underlying model parameters $\tau_0$ and $\boldsymbol{\theta}_0$. In practice, these parameters should be estimated from the data. Here we use the pseudo-likelihood based estimators introduced in \cite{bhattacharya2024causal}. 
Formally, given $(\bY,\bT,\bX)$, the pseudo-likelihood estimator of the parameters $(\tau_0,\btheta_0)$ is defined as
\begin{align}
    (\hat \tau_{\text{MPL}}, \hat {\boldsymbol{\theta}}_{\text{MPL}}) &=\argmax_{\tau,\boldsymbol{\theta}} \prod_{i=1}^{n} f(\bY_i| \bY_{-i},\bT,\bX). \label{eq:mple_outcome}  
\end{align}
as long as the maximizers in the above display are unique. We assume that the treatment assignments follow the model \eqref{eq:prop_score}. It is known~\cite[Theorem 2.3]{bhattacharya2024causal} that $(\hat \tau_{\text{MPL}}, \hat {\boldsymbol{\theta}}_{\text{MPL}})$ are $\sqrt{n}$-consistent as long as $\|\bA_n\|,\|\mathbf{M}_n\| =O(1)$. In turn, our next result establishes a stability result for the causal effect estimates, and furnishes fully data driven estimators for the causal effects.  

\begin{theorem}\label{lem:mpl_plug_in}
 Assume that $\| \mathbf{M}_n\| = O(1)$ and Assumption~\ref{assn:interaction} holds. Recall the definitions of  $\widehat{\mathrm{DE}}_{(r,\delta)}, \widehat{\mathrm{IE}}_{(r,\delta)}$ from~\eqref{eq:de_hat_define} and~\eqref{eq:ie_hat_define} respectively. Then for any $\varepsilon>0$, we have
    \begin{align*}
       & \lim\limits_{n \rightarrow \infty} \bP \left(\left|\widehat{\mathrm{DE}}_{(r,\delta)}(\tau_0, \btheta_0) -\widehat{\mathrm{DE}}_{(r,\delta)}(\hat \tau_{\mathrm{MPL}}, \hat {\boldsymbol{\theta}}_{\mathrm{MPL}})\right| > \varepsilon \right)=0, \\
        &\lim\limits_{n \rightarrow \infty} \bP \left(\left|\widehat{\mathrm{IE}}_{(r,\delta)}(\tau_0, \btheta_0) -\widehat{\mathrm{IE}}_{(r,\delta)}(\hat \tau_{\mathrm{MPL}}, \hat {\boldsymbol{\theta}}_{\mathrm{MPL}})\right| > \varepsilon \right)=0.
    \end{align*}
The same conclusion holds if we replace $\widehat{\mathrm{DE}}_{(r,\delta)}, \widehat{\mathrm{IE}}_{(r,\delta)}$ by $\widehat{\mathrm{DE}}_M$ and $\widehat{\mathrm{IE}}_M$ obtain via~\eqref{eq:de_amp} and~\eqref{eq:ie_amp} respectively.
\end{theorem}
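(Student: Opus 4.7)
The plan is to combine two ingredients: the $\sqrt{n}$-consistency of $(\hat\tau_{\mathrm{MPL}}, \hat\btheta_{\mathrm{MPL}})$ guaranteed by \cite[Theorem 2.3]{bhattacharya2024causal} (which applies since $\|\bA_n\|, \|\bM_n\| = O(1)$ under our assumptions), together with a uniform Lipschitz-in-parameter bound on the estimators $\widehat{\mathrm{DE}}_{(r,\delta)}(\tau,\btheta)$, $\widehat{\mathrm{IE}}_{(r,\delta)}(\tau,\btheta)$, $\widehat{\mathrm{DE}}_M(\tau,\btheta)$, $\widehat{\mathrm{IE}}_M(\tau,\btheta)$ viewed as functions of the plugged-in parameters. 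The $\sqrt{n}$ rate gives $|\hat\tau_{\mathrm{MPL}} - \tau_0| + \|\hat\btheta_{\mathrm{MPL}} - \btheta_0\| = O_P(n^{-1/2})$, so any $O(1)$-Lipschitz stability in the parameters will immediately yield $o_P(1)$ perturbations and hence the claimed convergence in probability.

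For the dense-graph estimator, I would exploit the Gibbs formula \eqref{eq:simplified_gibbs}. Fix $\bar\bT, \bar\bX$ and view the induced conditional distribution on $\{V_{a,k,\pm}\}$ as an exponential family in the parameters $(\tau, \btheta)$ with sufficient statistics $S_\tau = \sum_i y_i \bar T_i$ and $S_\btheta = \sum_i y_i \bar\bX_i$. Differentiating the conditional expectations $\langle V_{a,k,\pm}\rangle_{(r,\delta)}(\tau,\btheta)$ gives covariances of the form $\mathrm{Cov}(V_{a,k,\pm}, S_\tau)$ and $\mathrm{Cov}(V_{a,k,\pm}, S_\btheta)$, each bounded in magnitude by $|V_{a,k,\pm}|\cdot \|S\|_\infty \le n^2$ (since all $y_i \in \{\pm 1\}$). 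Summing over the $O(2^r m)$ blocks and dividing by the $1/n$ prefactor in \eqref{eq:de_hat_define}--\eqref{eq:ie_hat_define} shows that the gradient in $(\tau,\btheta)$ is uniformly $O(n)$ in absolute value per individual summand but $O(1)$ after normalization—more precisely, the derivative of each summand $\langle V_{a,k,\pm}\rangle_{(r,\delta)}/n$ is bounded by a constant depending only on $r,m$ and not on $n$. Hence $\widehat{\mathrm{DE}}_{(r,\delta)}$ is Lipschitz in $(\tau,\btheta)$ on compact sets with constant independent of $n$, and the plug-in error is $O_P(n^{-1/2})$. The indirect-effect part is identical, now with the additional ``$-\mathbf{1}$''-conditioned run which receives the same treatment.

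For the AMP estimator in the Gaussian case, the argument splits into two pieces. First, for any \emph{fixed} pair $(\tau,\btheta)$ inducing functions $g_{\tau}(x) = \partial_x\Phi_{\mu^\star_{\tau,0}}(q(\tau),x)$, the AMP iterates \eqref{eq:amp_iterate} are, for each fixed iteration count $M$, Lipschitz functionals of the inputs $(\bh_1,\bh_2) = (\tau \bar\bT, \bar\bX\btheta)$ whenever $g_\tau$ is Lipschitz and $\|\bA_n\|$ stays bounded (the latter holds with high probability under Assumption~\ref{assn:interaction_gaussian}). A standard induction on $k$, using $\|\bA_n\|$ and the boundedness of $d_k$, yields an $n$-independent Lipschitz constant $L_M$ in $(\bh_1,\bh_2)$ and hence in $(\tau,\btheta)$. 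Second, I need continuity of $g_\tau$ in $\tau$: by strict convexity of the Parisi functional \cite{auffinger2015parisi}, $\tau\mapsto \mu^\star_{\tau,0}$ is continuous in the weak topology on $\mathcal{P}([0,1])$, and by standard regularity of the Parisi PDE \cite{jagannath2016dynamic}, $\Phi_\mu$ depends continuously (in $C^1$) on $\mu$; hence $(\tau,x)\mapsto g_\tau(x)$ is jointly continuous. Combining, the mapping $(\tau,\btheta)\mapsto \widehat{\mathrm{DE}}_M(\tau,\btheta)$ is continuous at $(\tau_0,\btheta_0)$ uniformly over realizations of $\bA_n$ in a set of probability $1-o(1)$, which together with $\sqrt{n}$-consistency yields the result. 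The indirect-effect case is analogous using $\hat\mu^\star_{\tau,0}$.

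The main obstacle is the AMP stability in the direction of $\tau$, since $\tau$ enters both explicitly through $\bh_1$ and \emph{implicitly} through the nonlinearity $g_\tau$ via the Parisi optimizer. The explicit part is routine, but handling the implicit part cleanly requires the strict convexity and the uniqueness/continuity of $\mu^\star_{\tau,0}$ in $\tau$. For finite $M$, continuity in $\tau$ is enough to deduce $|\widehat{\mathrm{DE}}_M(\hat\tau_{\mathrm{MPL}},\hat\btheta_{\mathrm{MPL}}) - \widehat{\mathrm{DE}}_M(\tau_0,\btheta_0)| \to 0$ in probability, since $\hat\tau_{\mathrm{MPL}}\to \tau_0$ in probability and the convergence of $\widehat{\mathrm{DE}}_M$ in Theorem~\ref{thm:amp} is taken with $M$ fixed before $n\to\infty$. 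This is why the theorem is stated in the $\stackrel{\mathscr{P}_{n,M}}{\longrightarrow}$ sense and the argument does not require quantitative control on $\mu^\star_{\tau,0}$ as $\tau$ varies.
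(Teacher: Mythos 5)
Your overall strategy ($\sqrt{n}$-consistency of the MPL estimates combined with parameter-stability of the estimators) is exactly the paper's, and your treatment of the AMP case is essentially the paper's argument: the paper proves a dedicated stability lemma (Lemma~\ref{lem:amp_stable}) showing that an $\ell^\infty$-perturbation of the external field $\mathbf{h}$ propagates controllably through the iterates, handling the explicit dependence through $\mathbf{h}_1,\mathbf{h}_2$ by induction and the implicit dependence through $\mu^\star_{\tau,0}$ via strict convexity of the Parisi functional and continuity of the PDE solution in $\mu$ (Lemma~\ref{lem:parisi_pde_cont}). One small point there: you will need continuity of $\partial_{xx}\Phi_\mu$ in $\mu$, not just $C^1$ regularity, because the Onsager coefficient $d_k$ involves the second derivative; the paper proves precisely this in Lemma~\ref{lem:parisi_pde_cont}.

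The dense-graph half of your proposal has a genuine gap. You bound $\mathrm{Cov}(V_{a,k,\pm},S_\tau)$ by $n^2$ and then assert that after the $1/n$ normalization the derivative of each summand is bounded by a constant independent of $n$; the arithmetic actually gives $n^2/n=n$, so your argument only yields a Lipschitz constant of order $n$ in $(\tau,\btheta)$. Combined with the $O_P(n^{-1/2})$ parameter error this produces a bound of order $O_P(n^{1/2})$, which diverges, so the proof does not close. Moreover the stronger $O(1)$ claim cannot be repaired by a sharper covariance estimate: $\partial_\tau\widehat{\mathrm{DE}}_{(r,\delta)}$ is $\tfrac{2}{n}\mathrm{Var}(\sum_\ell \bar T_\ell y_\ell)$, and at low temperature (the regime this paper is about) the susceptibility $\tfrac{1}{n}\mathrm{Var}(\sum_\ell \bar T_\ell y_\ell)$ can be of order $n$ rather than $O(1)$ — e.g.\ a Curie--Weiss block with vanishing effective field has total-magnetization variance $\Theta(n^2)$. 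The paper does not attempt a uniform Lipschitz bound; its proof observes that the same $(r,\delta)$-block approximation is used for both parameter values, so that the induced law \eqref{eq:simplified_gibbs} is a fixed finite-dimensional exponential family in $(\tau,\btheta)$, and argues via continuity of the conditional expectations $\langle V_{a,k,\pm}\rangle_{(r,\delta)}$ in the parameters. If you want a quantitative version, the right tool is the convexity/difference-quotient device already used in Lemma~\ref{lem:de_ie_close}: a perturbation of the external field by $O_P(n^{-1/2})$ moves the normalized log-partition function by $O_P(n^{-1/2})$ uniformly (Lemma~\ref{lem:graph_approx_zn}), and convexity in $\tau$ (resp.\ $\gamma$) converts that into control of the derivative; a pointwise gradient bound of the kind you propose is not available here.
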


\section{Discussion and Future directions}
\label{sec:discussions}
We discuss follow up questions arising from our results, and collect primary thoughts regarding
their resolution.
\begin{enumerate}
    \item[(i)] High-dimensional covariates: We assume throughout that covariates $\bX_i \in \mathbb{R}^d$ are i.i.d., compactly supported, with fixed dimension $d$. It would be interesting to investigate extensions where $d= d(n) \rightarrow \infty$. In classical high-dimensional regimes, $\btheta$ is typically assumed sparse and estimated via $\ell_1$-regularized methods~\cite{mukherjee2024logistic}. Understanding how such ideas might adapt to our setting is an open question. We also assume the covariate distribution $\mathbb{P}_X$ is known. If $\mathbb{P}_{X}$ is unknown, one may estimate it from the data and use a plug-in estimate for the treatment effect. We refer to ~\cite{bhattacharya2024causal} for an analysis of this plug-in estimate.
    \item[(ii)] Higher-order interaction:  Our outcome regression model~\eqref{eq:define_gibbs} focuses on quadratic interactions given by $\bA_n$. A natural next step is to incorporate higher-order Markov random fields, such as tensor Ising models~\cite{akiyama2019phase,sasakura2014ising}. Parameter estimation for these models~\cite{liu2024tensor,mukherjee2022estimation} 
    presents several challenges beyond existing works. Extending our results to tensor interactions remains an appealing direction for future research. 
    \item[(iii)] Universality: For gaussian interaction matrices, we characterize the limiting causal effects using the Parisi formula. The corresponding causal effect estimation algorithm is based on Approximate Message Passing (AMP). The limit of the log-partition function and AMP dynamics are both known to exhibit universality to the distribution of the entries of the interaction matrix ~\cite{bayati2015universality,chen2021universality,dudeja2023universality,chatterjee2005simple,carmona2006universality}. Using these universality results, our results for the Gaussian case extend immediately to symmetric i.i.d. interaction matrices with matching means and variances and sufficiently light tails (e.g. sub-Gaussian).

    \item[(iv)] Sparse interaction matrices: In many applications, the interactions among the study units are sparse e.g. the interaction graph might have bounded maximum degree. Extending our results to sparse interaction matrices is an exciting direction for future research. However, we expect that this will require fundamentally new ideas. The Belief Propagation algorithm~\cite{dembo2013factor,murphy2013loopy} could be useful in computing the low-dimensional marginals under sparse interactions. However, translating these ideas into consistent causal effect estimation is non-trivial, and beyond the scope of current techniques.

    \item[(v)] General outcomes: We assume $\by \in \{-1,1\}^n$ throughout, but our methods extend naturally to general bounded outcomes. For dense interaction matrices $\bA_n$ (Assumption~\ref{assn:interaction}), one can discretize $\by_i$'s prior to applying the Regularity Lemma. The resulting analogue~\eqref{eq:gibbs_simplify} will become more involved for general discrete-valued outcomes, but the overall approach remains valid. For Gaussian interaction matrices, the limiting free energy for general bounded outcomes is characterized in~\cite{panchenko2005free}. It should be possible to construct an AMP algorithm similar to Algorithm~\ref{alg:amp} for general  bounded outcomes. We omit this extension to reduce the notational overhead.

    \item[(vi)] Uncertainty quantification:  Earlier work by the authors~\cite{bhattacharya2024causal} proposed a parametric bootstrap method for construction of confidence intervals for treatment effects. Guarantees for the method were derived under the assumption that  $\|\textrm{Cov}(\by)\|=O_{\mathbb{P}}(1)$; this is expected to hold exclusively at high-temperature. 
    Our main focus in this work is to go beyond the high-temperature regime---uncertainty quantification for the causal effects will require substantially new ideas in this regime.

    \item[(vii)] Model misspecification: Our results hinge crucially on the assumption that the outcome model \eqref{eq:define_gibbs} is well-specified. Causal inference under interference with possible model misspecification is an important area of current research. It would be interesting to see if our current ideas could be extended to tackle this challenging problem.   
\end{enumerate}

\section{Proofs}
\label{sec:proofs}
We prove our main results in this section. The proofs of some intermediate results are deferred to the Appendix. We establish Theorems \ref{thm:estimation_hard}, \ref{thm:zn_limit}, \ref{thm:de_consistency},  
\ref{cor:de_answer}, \ref{thm:amp} and \ref{lem:mpl_plug_in} in Sections ~\ref{proof:estimation_hard}, \ref{sec:zn_limit_proof}, \ref{sec:de_consistency_proof}, \ref{proof:sk_limit}, \ref{proof:amp} and \ref{sec:estimation_proof} respectively.       

\subsection{Proof of Theorem~\ref{thm:estimation_hard}}
\label{proof:estimation_hard}

\begin{proof}[Proof of Theorem~\ref{thm:estimation_hard}]
Recall the normalization constant $Z_n$ from \eqref{eq:define_Z}. We set $\boldsymbol{\theta}_0=0$ and denote the normalization constant as $Z_n(\tau)$ to emphasize the dependence on $\tau$. Our proof proceeds by contradiction -- if there exists a polynomial time algorithm $\widehat{\mathrm{DE}}$ for the direct effect, we will show that one can approximate $\frac{1}{n} \log Z_n(0)$ to arbitrary accuracy. However, in \cite[Theorem 1.2]{kunisky2024optimality}, the author establishes a reduction from such an approximation scheme to Conjecture~\ref{conj:bkw}. This will conclude the proof.  


To this end, we split the proof into the following two steps: first, we will show that $\frac{1}{n} \log Z_n(\tau)$ can be approximated well for $\tau \geq 0$ sufficiently large. Second, we will show if $\frac{1}{n} \log Z_n(\tau)$ is approximable and~\eqref{eq:de_approximable} holds then there exists a polynomial time algorithm which approximates $\frac{1}{n} \log Z_n(0)$. 

\noindent \textbf{(i) Approximation of $\frac{1}{n} \log Z_n(\tau)$ for large $\tau$:}
We will show that the quantity
\begin{align}\label{eq:define_phi}
    \phi(\tau):= \frac{1}{n} \log \sum_{\by\in \{\pm 1 \}^n} \exp\Big(\frac{1}{2}\,\mathbf{t}^\top \mathbf{A}_n \mathbf{t}+ \tau \mathbf{y}^\top  \mathbf{t} \Big) = \frac{1}{2n} \mathbf{t}^{\top}\bA_n \mathbf{t} + \frac{1}{n} \sum_{i=1}^{n} \log (2 \cosh{(\tau t_i)})
\end{align}
approximates $\frac{1}{n}\log Z_n(\tau)$ for $\tau$ sufficiently large. 
In addition,
\begin{align*}
    &\frac{1}{n}\log Z_n(\tau) -\phi (\tau) \nonumber\\ 
    &= \frac{1}{n}\log \frac{\sum_{\by\in \{\pm 1 \}^n} \exp\Big(\frac{1}{2}\,\mathbf{y}^\top \mathbf{A}_n \mathbf{y}+ \tau \mathbf{y}^\top  \mathbf{t} \Big)}{\sum_{\by\in \{\pm 1 \}^n} \exp\Big(\tau \mathbf{y}^\top  \mathbf{t} \Big)} - \frac{1}{2n} \mathbf{t}^\top \mathbf{A}_n \mathbf{t} \\
    &= \frac{1}{n}\log \left\langle e^{\frac{1}{2}\mathbf{y}^\top \mathbf{A}_n \mathbf{y} } \right\rangle - \frac{1}{2n} \mathbf{t}^\top \mathbf{A}_n \mathbf{t} \\
    &= \frac{1}{n} \log \left\langle e^{\frac{1}{2}\mathbf{y}^\top \mathbf{A}_n \mathbf{y} } \mathbbm{1}_{\{\frac{1}{n} \by^\top \bt \ge 1-\kappa\}} + e^{\frac{1}{2}\mathbf{y}^\top \mathbf{A}_n \mathbf{y} } \mathbbm{1}_{\{\frac{1}{n} \by^\top \bt < 1-\kappa\}} \right\rangle - \frac{1}{2n} \mathbf{t}^\top \mathbf{A}_n \mathbf{t} \\
    &=  \Big(\frac{1}{n} \log \left\langle e^{\frac{1}{2}\mathbf{y}^\top \mathbf{A}_n \mathbf{y} } \mathbbm{1}_{\{\frac{1}{n} \by^\top \bt \ge 1-\kappa\}}\right\rangle - \frac{1}{2n} \mathbf{t}^\top \mathbf{A}_n \mathbf{t} \Big) + \frac{1}{n} \log \left[1+ \frac{\left\langle e^{\frac{1}{2}\mathbf{y}^\top \mathbf{A}_n \mathbf{y} } \mathbbm{1}_{\{\frac{1}{n} \by^\top \bt < 1-\kappa\}}\right \rangle}{\left\langle e^{\frac{1}{2}\mathbf{y}^\top \mathbf{A}_n \mathbf{y} } \mathbbm{1}_{\{\frac{1}{n} \by^\top \bt \ge 1-\kappa\}}\right \rangle} \right] \nonumber \\
    &:= T_1 + T_2
\end{align*}
for some fixed $\kappa >0$. In the display above, the notation $\langle \cdot \rangle$ denotes the expectation w.r.t a product measure on $\{\pm 1\}^n$ with means $\tanh(\tau t_i)$, $i=1,\ldots,n$. Next we will show that $T_1, T_2 \rightarrow 0$ as $n \rightarrow \infty$, followed by $\tau \rightarrow \infty$ and $\kappa \rightarrow 0$. To show $T_1 \rightarrow 0$, note that if $\frac{1}{n} \by^\top \bt \ge 1-\kappa$, then $\frac{1}{n}\|y-t\|^2_2 \le 2\kappa$ and 
\begin{align}\label{eq:y_upper}
    e^{\frac{1}{2}\mathbf{y}^\top \mathbf{A}_n \mathbf{y}}&= e^{\frac{1}{2} \bt^\top \bA_n \bt + \bt^\top \bA_n (\by-\bt)+ \frac{1}{2}(\by-\bt)^\top \bA_n (\by-\bt)} \nonumber\\
    & \le e^{\frac{1}{2} \bt^\top \bA_n \bt + \|\bA_n\| \sqrt{2 \kappa} n+ n \kappa\|\bA_n\|}.  
\end{align}
Therefore we have
\begin{align*}
    T_1 &\le \frac{1}{n} \log \left \langle e^{\frac{1}{2} \bt^\top \bA_n \bt + \|\bA_n\| \sqrt{2\kappa} n+ n \kappa\|\bA_n\|} \mathbbm{1}_{\{\frac{1}{n} \by^\top \bt \ge 1-\kappa\}} \right\rangle- \frac{1}{2n} \mathbf{t}^\top \mathbf{A}_n \mathbf{t} \\
    & \le \frac{1}{n} \log \left \langle e^{\frac{1}{2} \bt^\top \bA_n \bt + \|\bA_n\| \sqrt{2 \kappa} n+ n \kappa\|\bA_n\|} \right\rangle- \frac{1}{2n} \mathbf{t}^\top \mathbf{A}_n \mathbf{t}\\
    &= \|\bA_n\| \Big( \sqrt{2 \kappa} + \kappa \Big).
\end{align*}
To show a lower bound on $T_1$, similar to~\eqref{eq:y_upper}, we observe
\begin{align}\label{eq:y_lower}
    e^{\frac{1}{2}\mathbf{y}^\top \mathbf{A}_n \mathbf{y}}& \ge e^{\frac{1}{2} \bt^\top \bA_n \bt - \|\bA\| \sqrt{2 \kappa} n - \kappa\|\bA_n\| n }. 
\end{align}
Hence 
\begin{align*}
    T_1 &\ge \frac{1}{n} \log \left \langle e^{\frac{1}{2} \bt^\top \bA_n \bt - \|\bA_n\| \sqrt{2 \kappa} n - n \kappa \|\bA_n\|} \mathbbm{1}_{\{\frac{1}{n} \by^\top \bt \ge 1-\kappa\}} \right\rangle- \frac{1}{2n} \mathbf{t}^\top \mathbf{A}_n \mathbf{t} \\
    &= \frac{1}{n} \Bigg[\frac{1}{2} \bt^\top \bA_n \bt - \|\bA_n\| \sqrt{2 \kappa} n - n \kappa \|\bA_n\|\Bigg] +\frac{1}{n} \log \left \langle \mathbbm{1}_{\{\frac{1}{n} \by^\top \bt \ge 1-\kappa\}} \right\rangle- \frac{1}{2n} \mathbf{t}^\top \mathbf{A}_n \mathbf{t}\\
    & \ge \frac{1}{n} \log \left \langle \mathbbm{1}_{\{\frac{1}{n} \by^\top \bt \ge 1-\kappa\}} \right\rangle  -\|\bA_n\| \sqrt{2 \kappa} - \kappa \|\bA_n\| \\
    &\ge \frac{1}{n}\log(1- e^{-c(\tau,\kappa) n}) -\|\bA_n\| \Big(\sqrt{2 \kappa} + \kappa \Big)\\
    &\ge -\frac{1}{n} \log 2 - \|\bA_n\| \Big(\sqrt{2 \kappa} + \kappa \Big), 
\end{align*}
for any large $\tau$ where the third inequality is due to Lemma~\ref{lem:bennett}. Therefore, we obtain that
\[
\lim\limits_{\kappa \rightarrow 0} \lim\limits_{\tau \rightarrow \infty} \lim\limits_{n \rightarrow \infty} |T_1| =0.
\]
Now we turn to the proof of $T_2 \rightarrow 0$. Note that 
\begin{align*}
    \left \langle e^{\frac{1}{2}\mathbf{y}^\top \mathbf{A}_n \mathbf{y} } \mathbbm{1}_{\{\frac{1}{n} \by^\top \bt < 1-\kappa\}} \right \rangle \le e^{\frac{1}{2}n \|\bA_n\|} \left \langle  \mathbbm{1}_{\{\frac{1}{n} \by^\top \bt < 1-\kappa\}} \right \rangle  \le e^{n(\frac{1}{2} \|\bA_n\|-c(\tau,\kappa))},
\end{align*}
where the last inequality is due to Lemma~\ref{lem:bennett} and $c(\tau,\kappa)\rightarrow \infty$ as $\tau \rightarrow \infty$. Moreover,
\begin{align*}
     \left \langle e^{\frac{1}{2}\mathbf{y}^\top \mathbf{A}_n \mathbf{y} } \mathbbm{1}_{\{\frac{1}{n} \by^\top \bt \ge 1-\kappa\}} \right \rangle &\ge e^{-\frac{n}{2} \|\bA_n\|} \left \langle \mathbbm{1}_{\{\frac{1}{n} \by^\top \bt \ge 1-\kappa\}} \right \rangle \\
     & \ge e^{-\frac{n}{2} \|\bA_n\|} (1-e^{-c(\tau,\kappa)n}) \\
     &\ge \frac{1}{2} e^{-\frac{n}{2} \|\bA_n\|},
\end{align*}
using Lemma~\ref{lem:bennett}. By the above two displays,
\begin{align*}
    T_2 \le \frac{1}{n} \log \left[ 1 + 2 e^{n \big(\|\bA_n\|-c(\tau,\kappa)\big)}\right] \le \frac{1}{n} \log 3 \rightarrow 0,
\end{align*}
since $n(\|\bA_n\|-c(\tau,\kappa))<0$ for large $\tau$. This implies that
\begin{equation}
\lim\limits_{\tau \rightarrow \infty} \lim\limits_{n \rightarrow \infty} \sup_{\mathbf{t} \in \{ \pm 1\}^n}\left| \frac{1}{n}\log Z_n (\tau) -\phi (\tau) \right| =0. \nonumber 
\end{equation}

In turn, this implies 
\begin{align}
     \lim_{\tau \to \infty} \lim_{n \to \infty} \Big| \frac{1}{n} \mathbb{E}_{\bar{\mathbf{T}}}[\log Z_n (\tau)] - \mathbb{E}_{\bar{\mathbf{T}}}[\phi(\tau)] \Big| = 0, \nonumber 
\end{align}
where $\bar{\mathbf{T}} \sim \mathrm{Unif}(\{\pm 1\}^n)$. Consequently, we can approximate $\frac{1}{n} \mathbb{E}_{\bar{\mathbf{T}}}[\log Z_n(\tau)]$ using $\mathbb{E}_{\bar{\mathbf{T}}}[\phi(\tau)]$, which can be computed explicitly. 

\noindent \textbf{(ii) Approximation of $\frac{1}{n} \log Z_n(0)$:}
Using~\eqref{eq:simplify_de}, we obtain 
\begin{equation}
    \mathrm{DE}(\tau)= \frac{2}{n} \frac{\partial}{\partial \tau'} \mathbb{E}_{\bar{\mathbf{T}}}[\log  Z_n(\tau')]\vert_{\tau'=\tau}.
\end{equation}
Therefore, for any $\tau>0$
\begin{align*}
    \frac{1}{n} \mathbb{E}_{\bar{\mathbf{T}}} [\log Z_n(\tau)] = \frac{1}{n} \log Z_n(0)+ \frac{1}{2} \int_{0}^{\tau} \mathrm{DE}(\tau') d\tau'. \nonumber 
\end{align*}
Suppose for some $\tau$ and $\eta>0$, there exists an estimator $\widehat{\mathrm{DE}}$ such that
\begin{align}\label{eq:de_tau}
   \mathbb{P}\Big( \sup_{\tau' \in[0,\tau]} \left|\widehat{\mathrm{DE}}(\tau')- \mathrm{DE}(\tau')\right|<\eta \Big) = 1-o(1).
\end{align}
Since $\mathrm{DE}$ is continuous in $\tau'$, for any $\eta>0$ there exists $M \in  \mathbb{N}$ and a partition $0=\tau_1 <\tau_2 <\ldots < \tau_M= \tau$ such that
\begin{align*}
    \left|\int_{0}^{\tau} \mathrm{DE}(\tau') d\tau' -\frac{1}{M}\sum_{k=1}^{M} \mathrm{DE}(\tau_k) \right| < \eta.
\end{align*}
Define 
\begin{align}\label{eq:define_psi}
    \psi (\tau) = \mathbb{E}_{\bar{\mathbf{T}}}[\phi (\tau) ] - \frac{1}{M}\sum_{k=1}^{M} \widehat{\mathrm{DE}}(\tau_k),
\end{align}
where $\phi (\tau) $ is defined in~\eqref{eq:define_phi}. Then we have
\begin{align*}
    \left| \frac{1}{n} \log Z_n(0)- \psi(\tau) \right| &= \left| \Big(\frac{1}{n} \mathbb{E}_{\bar{\bT}}[\log Z_n(\tau)]- \mathbb{E}_{\bar{\mathbf{T}}}[\phi(\tau)] \Big) + \frac{1}{M}\sum_{k=1}^{M} \Big( \widehat{\mathrm{DE}}(\tau_k)-\mathrm{DE}(\tau_k)\Big) \right| \\
    & \le  \left| \frac{1}{n}\mathbb{E}_{\bar{\bT}}[\log Z_n (\tau)] -\mathbb{E}_{\bar{\mathbf{T}}}[\phi(\tau)] \right|+ \sup_{\tau' \in[0,\tau]} \left|\widehat{\mathrm{DE}}(\tau')- \mathrm{DE}(\tau')\right| \\
    & \le 2\eta,
\end{align*}
for large $n,\tau$ using~\eqref{eq:zn_limit} and~\eqref{eq:de_tau}. Using~\cite[Theorem 1.2]{kunisky2024optimality} with $\delta=2 \eta$, we can design a hypothesis test which violates Conjecture~\ref{conj:bkw}. This completes the proof.

\end{proof}

Finally we prove the auxilliary concentration lemma used in the proof.
\begin{lemma}\label{lem:bennett}
    Fix $0<\kappa<1$ and $\bt \in \{ \pm 1\}^n$. Suppose $y_i$'s are independent $\{\pm 1\}$ valued random variables such that $\mathbb{E}(y_i)= \tanh(t_i \tau_i)$. Define the set $\mathcal{A}= \left\{\frac{1}{n} \by^\top \bt < 1-\kappa \right\}$. Then we have
    \begin{align*}
        \lim\limits_{\tau \rightarrow \infty} \lim\limits_{n \rightarrow \infty} \frac{1}{n} \log \mathbb{P}(\mathcal{A}) = -\infty.
    \end{align*}
\end{lemma}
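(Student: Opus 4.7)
The plan is to reduce the tail bound to a Chernoff/large-deviation estimate for a sum of i.i.d.\ Bernoulli variables, whose parameter is driven to $0$ by taking $\tau\to\infty$.

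First, I would symmetrize by the sign pattern $\bt$. Setting $X_i:=t_i y_i\in\{\pm 1\}$, independence of the $y_i$'s gives independence of the $X_i$'s, and (interpreting the statement's $\tanh(t_i\tau_i)$ as $\tanh(\tau t_i)$, consistent with the usage in the preceding display) each $X_i$ has $\mathbb{E} X_i = t_i\tanh(\tau t_i)=\tanh\tau$, so the $X_i$'s are in fact i.i.d.\ with $\mathbb{P}(X_i=1)=\tfrac{1+\tanh\tau}{2}$ and $\mathbb{P}(X_i=-1)=\tfrac{1-\tanh\tau}{2}=:p(\tau)$. Writing $N:=\#\{i:X_i=-1\}$, we have $\tfrac{1}{n}\by^{\top}\bt=1-2N/n$, so the event $\mathcal{A}$ becomes $\{N>n\kappa/2\}$, a large-deviation event for a $\mathrm{Bin}(n,p(\tau))$ random variable whose mean $np(\tau)$ becomes much smaller than $n\kappa/2$ once $\tau$ is large.

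Next I would apply the standard Chernoff bound for Bernoulli sums: for any $a\in(p(\tau),1)$,
\begin{equation*}
\frac{1}{n}\log \mathbb{P}\bigl(N\ge na\bigr)\;\le\;-\,\mathrm{KL}\bigl(a\,\|\,p(\tau)\bigr),
\end{equation*}
where $\mathrm{KL}(a\|p)=a\log(a/p)+(1-a)\log((1-a)/(1-p))$. Taking $a=\kappa/2$ (legitimate for $\tau$ large enough that $p(\tau)<\kappa/2$), this yields
\begin{equation*}
\limsup_{n\to\infty}\frac{1}{n}\log \mathbb{P}(\mathcal{A})\;\le\;-\,\mathrm{KL}\!\left(\tfrac{\kappa}{2}\,\Big\|\,p(\tau)\right).
\end{equation*}

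Finally, it remains to check that $\mathrm{KL}(\kappa/2\|p(\tau))\to\infty$ as $\tau\to\infty$. Since $p(\tau)=e^{-\tau}/(e^{\tau}+e^{-\tau})\le e^{-2\tau}$, the dominant term $\tfrac{\kappa}{2}\log(\tfrac{\kappa}{2p(\tau)})$ is at least $\tfrac{\kappa}{2}\bigl(2\tau+\log(\kappa/2)\bigr)$, which diverges linearly in $\tau$; the remaining term $(1-\kappa/2)\log\bigl((1-\kappa/2)/(1-p(\tau))\bigr)$ tends to the finite constant $(1-\kappa/2)\log(1-\kappa/2)$ and is harmless. Chaining these bounds gives $\lim_{\tau\to\infty}\limsup_{n\to\infty}\frac{1}{n}\log\mathbb{P}(\mathcal{A})=-\infty$, which is the claim.

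No step looks like a serious obstacle: everything reduces to Chernoff for Bernoulli sums once the reduction $X_i=t_iy_i$ is made. The only thing worth stating carefully is the monotonicity/positivity that lets the Chernoff bound be invoked uniformly in $\bt$, which is immediate because, after the substitution, the law of $N$ does not depend on $\bt$ at all.
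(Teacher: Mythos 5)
Your proposal is correct, and it reaches the conclusion by a slightly different (and in fact sharper) route than the paper. The paper keeps the variables $y_i$ as they are, centers the sum $\bt^\top\by$, computes its variance $V=n\sech^2(\tau)$, and applies Bennett's inequality to get $\mathbb{P}(\mathcal{A})\le\exp(-n\,c(\tau,\kappa))$ with $c(\tau,\kappa)=\tfrac{\sech^2(\tau)}{4}h\bigl(\kappa/\sech^2(\tau)\bigr)$, then checks $c(\tau,\kappa)\to\infty$ via the growth of $h(u)=(1+u)\log(1+u)-u$. You instead first symmetrize via $X_i=t_iy_i$, observe that the $X_i$ become genuinely i.i.d.\ (so the law of the event no longer depends on $\bt$ at all, which disposes of uniformity in $\bt$ for free), recast $\mathcal{A}$ as a binomial upper-tail event $\{N>n\kappa/2\}$ with success probability $p(\tau)\le e^{-2\tau}$, and apply the exact Chernoff bound with rate $\mathrm{KL}(\kappa/2\,\|\,p(\tau))$. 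Your exponent is the optimal large-deviation rate and visibly diverges linearly in $\tau$ through the term $\tfrac{\kappa}{2}\log(1/p(\tau))$, whereas Bennett's exponent diverges more slowly; both are more than enough for the qualitative claim. One cosmetic point: like the paper, you establish only an upper bound on $\limsup_n\frac{1}{n}\log\mathbb{P}(\mathcal{A})$, which is all the lemma is actually used for, and your bound is in fact uniform in $n$, so this is harmless. Your reading of the statement's typo $\tanh(t_i\tau_i)$ as $\tanh(\tau t_i)$ matches the paper's usage.
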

\begin{proof}
    We invoke Bennett's inequality~\cite[Theorem 2.9]{boucheronbook} for independent random variables. Define \begin{align*}
        V:= \text{Var}(\bt^\top \by)= \sum_{i=1}^n \text{Var} (y_i) =n \sech^2(\tau),
    \end{align*}
    since $\sech$ is symmetric and $t_i \in \{\pm 1\}$. Further,
    \begin{align*}
        \mathbb{P}(\mathcal{A})&= \mathbb{P}\left(\sum_{i=1}^n t_i(y_i- \tanh(\tau t_i)) < n \Big[1-\kappa - \frac{1}{n}\sum_{i=1}^{n} t_i\tanh(\tau t_i)) \Big]\right) \\
        &= \mathbb{P}\left(\sum_{i=1}^n t_i(y_i- \tanh(\tau t_i)) < n \Big[1-\kappa - \tanh (\tau) \Big]\right) \\
        & \le \mathbb{P}\left(\sum_{i=1}^n t_i(y_i- \tanh(\tau t_i)) <  - \frac{n \kappa}{2}\right),
    \end{align*}
    for large $\tau >0$. Define $h(u)= (1+u)\log(1+u)-u$ for $u \ge 0$. Since $|t_i(y_i- \tanh(\tau t_i))| \le 2$, we obtain by Bennett's inequality that
    \begin{align*}
        \mathbb{P}(\mathcal{A}) \le \exp \left(- \frac{V}{4} h\Big(\frac{n \kappa}{V}\Big)\right) = \exp \left(- \frac{ n \sech^2(\tau)}{4} h\Big(\frac{ \kappa}{\sech^2(\tau)}\Big)\right) =: \exp \left( - n c(\tau,\kappa)\right),
    \end{align*}
    where we defined $c(\tau,\kappa)= \frac{\sech^2(\tau)}{4} h\Big(\frac{ \kappa}{\sech^2(\tau)}\Big)$. Further for any $\kappa >0$, we have  
    \[\lim\limits_{x \rightarrow \infty} \frac{h(\kappa x)}{x} = \lim\limits_{x \rightarrow \infty} \frac{(1+\kappa x) \log (1+\kappa x) - \kappa x}{x}= \infty. \]
Hence for any $\kappa>0$, $c(\tau,\kappa) \rightarrow \infty$ as $\tau \rightarrow \infty$. Therefore,
\[
\lim\limits_{n \rightarrow \infty}\mathbb{P}(\mathcal{A})\le -c(\tau,\kappa) \rightarrow -\infty
\]
as $\tau \rightarrow \infty$. This completes the proof of the Lemma.
\end{proof}

\subsection{Proof of Theorem~\ref{thm:zn_limit}}
\label{sec:zn_limit_proof}

\begin{lemma}\label{lem:efron_stein}
Recall the definition of $\tilde Z_n$ from~\eqref{eq:extra_gamma} and $\bar T_i \sim  \mathrm{Unif}(\pm 1)$ i.i.d., $\bar \bX_i \sim \mathbb{P}_{\bX}$ i.i.d. Then we have, 
    \begin{align*}
        \mathrm{Var}_{\bar\bT, \bar\bX}(\log \tilde Z_n (\bar{\bT}, \bar{\bX}))= O(n), \quad  \mathrm{Var}_{\bar\bX}(\log \tilde Z_n (- \mathbf{1}, \bar{\bX}))= O(n).
    \end{align*}
\end{lemma}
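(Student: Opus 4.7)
The plan is to apply the Efron--Stein inequality to the function $\log\tilde Z_n$, viewed as a function of the independent coordinates $\bar T_1,\ldots,\bar T_n,\bar\bX_1,\ldots,\bar\bX_n$. The key step is a bounded-difference calculation: I will show that resampling any single coordinate changes $\log\tilde Z_n$ by at most an $n$-independent constant. The $O(n)$ bound then follows immediately, since Efron--Stein produces a sum of $2n$ bounded squared-difference terms.

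First I would bound the effect of flipping $\bar T_i \to \bar T_i'$, holding all other coordinates fixed. Looking at \eqref{eq:extra_gamma}, swapping $\bar T_i$ only perturbs the exponent by $y_i\tau_0(\bar T_i'-\bar T_i)$. Since $|y_i|=1$, $|\bar T_i'-\bar T_i|\le 2$ and $|\tau_0|\le B_0$ (Assumption~\ref{assn:parameter_space}), the summand in $\tilde Z_n$ changes by a multiplicative factor lying in $[e^{-2B_0},e^{2B_0}]$ uniformly in $\by$. Factoring this uniform bound out of the sum defining $\tilde Z_n$ gives
\begin{equation*}
    \bigl|\log\tilde Z_n(\bar\bT',\bar\bX)-\log\tilde Z_n(\bar\bT,\bar\bX)\bigr|\le 2B_0.
\end{equation*}
By exactly the same argument applied to $\bar\bX_i\to\bar\bX_i'$, the exponent is perturbed by $y_i\btheta_0^\top(\bar\bX_i'-\bar\bX_i)$; because $\bar\bX_i,\bar\bX_i'\in[-1,1]^d$ and $\btheta_0\in[-M_0,M_0]^d$, this perturbation is bounded in absolute value by $2dM_0$, yielding
\begin{equation*}
    \bigl|\log\tilde Z_n(\bar\bT,\bar\bX')-\log\tilde Z_n(\bar\bT,\bar\bX)\bigr|\le 2dM_0.
\end{equation*}

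Now I would invoke the Efron--Stein inequality in its one-sided form. Let $\bar\bT^{(i)}$ (resp.\ $\bar\bX^{(i)}$) denote the tuple where only the $i$-th treatment (resp.\ covariate) is replaced by an independent copy. Then
\begin{equation*}
    \mathrm{Var}_{\bar\bT,\bar\bX}\bigl(\log\tilde Z_n(\bar\bT,\bar\bX)\bigr)
    \le \tfrac{1}{2}\sum_{i=1}^n\ee\bigl[(\log\tilde Z_n(\bar\bT,\bar\bX)-\log\tilde Z_n(\bar\bT^{(i)},\bar\bX))^2\bigr]
    +\tfrac{1}{2}\sum_{i=1}^n\ee\bigl[(\log\tilde Z_n(\bar\bT,\bar\bX)-\log\tilde Z_n(\bar\bT,\bar\bX^{(i)}))^2\bigr].
\end{equation*}
By the two bounded-difference estimates above, each summand is at most $\max\{4B_0^2,4d^2M_0^2\}$, so the total is $O(n)$ with a constant depending only on $B_0,M_0,d$. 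For the second assertion, the random vector $\log\tilde Z_n(-\mathbf{1},\bar\bX)$ depends only on $\bar\bX$, and the same Efron--Stein argument using only the $\bar\bX_i$-perturbations yields the $O(n)$ bound.

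The calculation is essentially routine once one observes that $\log\tilde Z_n$ is a uniformly Lipschitz function of each coordinate. The only mild subtlety is that the covariate perturbation bound picks up a factor of $d$, but since $d$ is a fixed constant in our setting this is absorbed into the $O(\cdot)$; no obstacle beyond this bookkeeping is anticipated.
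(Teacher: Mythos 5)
Your proposal is correct and follows essentially the same route as the paper: Efron--Stein over the $2n$ independent coordinates, combined with an $O(1)$ bounded-difference estimate for each single-coordinate resampling. The only cosmetic difference is that the paper obtains the per-coordinate bound via an interpolation of the Hamiltonian in the resampled coordinate, whereas you factor out the multiplicative perturbation of each summand directly; both yield the same conclusion.
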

\begin{proof}
Using Efron-Stein inequality~\cite{boucheronbook}, 
\begin{align}
    &\mathrm{Var}_{ \bar\bT, \bar\bX}(\log \tilde Z_n (\bar{\bT}, \bar{\bX}) )\nonumber \\ 
    &\leq \frac{1}{2}\mathbb{E}_{ \bar\bT,   \bar\bX} \Big[ \sum_{i=1}^{n} (\log \tilde Z_n (  \bar\bT,   \bar\bX) - \log \tilde Z_n(  \bar\bT^{(i)},  \bar \bX))^2 + \sum_{i=1}^{n} (\log \tilde Z_n (  \bar\bT,  \bar\bX) - \log \tilde Z_n(  \bar\bT,   \bar\bX^{(i)}))^2 \Big], \nonumber \end{align}
where $  \bar\bT^{(i)} = (  \bar T_1, \cdots,   \bar T_{i-1},   \bar T_i',   \bar T_{i+1}, \cdots,   \bar T_n)$, and $  \bar\bT' = ( \bar T_1', \cdots,   \bar T_n')$ are i.i.d. $\mathrm{Unif}(\{\pm 1\} )$ independent of $\bar{\bT}$. Similarly, $ \bar\bX^{(i)}=(  \bar\bX_1, \cdots,   \bar\bX_{i-1},   \bar\bX_i',   \bar\bX_{i+1}, \cdots,   \bar\bX_n)$, $  \bar\bX'= (  \bar\bX_1', \cdots,   \bar\bX_n')$ are i.i.d. $\mathbb{P}_{X}$ independent of $\bar{\bX}$. The proof of the Lemma follows once we establish that each term in the display above is $O(n)$. Without loss of generality, we work with the first term. The bound for the second term is similar, and thus omitted. 

The proof is by interpolation. Fix $1\leq i \leq n$. For $\upsilon \in [0,1]$, set 
\begin{align}
    e^{H(\upsilon)} = \int_{\by \in [-1,1] ^n}\exp\Big( \mathbf{y}^{\top} \mathbf{A}_n \mathbf{y} + \tau_0 \sum_{j\neq i} y_j  \bar T_j + \tau_0 y_i ((1-\upsilon)  \bar {T}_i + \upsilon   \bar T_i') + \mathbf{y}^{\top} (\bar \bX\mathbf{\theta_0} +\gamma \mathbf{1})\Big)\prod_{i=1}^{n}d\mu (y_i). \nonumber   
\end{align}
This implies, 
\begin{align}
    \log \tilde Z_n ( \bar \bT,  \bar\bX) - \log \tilde Z_n(  \bar\bT^{(i)},  \bar \bX) = \int_{0}^{1} \frac{\partial}{\partial \upsilon} H(\upsilon) \mathrm{d}\upsilon. \nonumber   
\end{align}
By direct computation, we obtain that for all $\upsilon \in [0,1]$, $|\frac{\partial}{\partial \upsilon} H(\upsilon)| \leq \tau_0$. In turn, this directly implies 
\begin{align}
    \sum_{i=1}^{n} \mathbb{E}_{ \bar\bT,  \bar \bX}(\log \tilde Z_n (  \bar\bT,   \bar\bX) - \log \tilde Z_n(  \bar\bT^{(i)} ,   \bar\bX))^2 \leq n \tau_0^2. 
\end{align}
Hence, $\text{Var}_{  \bar\bT,   \bar\bX}\log \tilde Z_n( \bar\bT,   \bar\bX) =O(n)$.  
Upon setting $\bar\bT= - \mathbf{1}$, the proof of the second part of the lemma proceeds analogously to the argument above.
\end{proof}

\begin{proof}[Proof of Theorem~\ref{thm:zn_limit}]

Under Assumption~\ref{assn:interaction}, we have $\text{Tr}(\bA^2_n)= o(n)$. Additionally, the parameter space is bounded. Consequently, we obtain~\cite{basak2017universality},~\cite[Theorem 1.6]{chatterjee2016nonlinear}
\begin{equation}\label{eq:cha_dembo}
    \sup_{\tau, \btheta, \bt, \bx, \gamma} \frac{1}{n} \Big\vert \log \tilde Z_n (\bt, \bx)- \sup_{\bv \in [-1,1]^n} \TT(\bv)\Big\vert \rightarrow 0,
\end{equation}
where $\TT$ is defined as:
\begin{equation}
    \TT(\bv):= \frac{1}{2} \bv^\top \bA_n \bv + \sum_{i=1}^n v_i(\tau t_i +\btheta^\top \bx_i + \gamma) - \sum_{i=1}^{n}I(v_i). 
\end{equation}
By definition of the weak-cut convergence (see Definition~\ref{def:defirst}), there exists a sequence of permutations $\{\pi_n\}_{n\ge 1}$ with $\pi_n\in S_n$ such that $$d_\square(W_{n \bA_n^{\pi_n}}, W) \rightarrow 0,\text{ where }\bA_n^{\pi_n}(i,j):=\bA_n(\pi_n(i),\pi_n(j)).$$
Since our proof does not depend on $\pi_n$, we assume $\pi_n(i)=i$, implying $d_\square(W_{n \bA_n}, W) \rightarrow 0$. We will focus on the case $\tau=\tau_0,\btheta=\btheta_0$.

\noindent
\textbf{Upper bound:} We prove the upper bound i.e., given $\bar{\mathbf{T}} \sim \mathrm{Unif}(\{ \pm 1\}^n)$, $\bar{\mathbf{X}} \sim \bP^{\otimes n}_X$,
$$\limsup\limits_{n \rightarrow \infty}\frac{1}{n} \mathbb{E}_{\bar{\bT}, \bar{\bX}}[\log \tilde Z_n (\bar{\bT}, \bar{\bX}) ]\le \sup_{F\in \mathcal{F}} G_{W,\tau_0,\btheta_0,\gamma}(F).$$
Fix $\bv \in [-1,1]^n$. Let $U \sim U(0,1)$ independent of $\bar T_i, \bar{\bX}_i$s. If $U \in (\frac{i-1}{n},\frac{i}{n}]$, set $V=v_i$, $\bar T= \bar T_i$, $\bar{\bX}=\bar{\bX}_i$, $i =1,\ldots, n$. Call $\mathcal{L}_n$ the joint probability distribution of $(U,V,\bar T,\bar\bX)$. Let $(U_j,V_j,\bar T_j,\bar \bX_j)$, $j=1,2$ be two i.i.d. samples from $\mathcal{L}_n$. Then, we have, by definition,
\begin{align}
    \frac{1}{n} \TT(\bv)&= \bE_{\mathcal{L}_n}(W_{n\bA_n}(U_1,U_2)V_1V_2)+ \bE_{\mathcal{L}_n}(V_1 (\tau_0 \bar T_1+ \btheta^\top_0 \bar \bX_1+\gamma)) -\bE_{\mathcal{L}_n}(I(V_1)), \nonumber
\end{align}
where $W_{n\bA_n}$ is defined as in \eqref{def:defirst}. Further, using \eqref{eq:cut_con}, we have
\begin{align}
    &\bE_{\mathcal{L}_n}(W_{n\bA_n}(U_1,U_2)V_1V_2)+ \bE_{\mathcal{L}_n}(V_1 (\tau_0 \bar T_1+ \btheta^\top_0 \bar \bX_1+\gamma)) -\bE_{\mathcal{L}_n}(I(V_1)) \nonumber \\
    &=\bE_{\mathcal{L}_n}(W(U_1,U_2)V_1V_2)+ \bE_{\mathcal{L}_n}(V_1 (\tau_0 \bar T_1+ \btheta^\top_0 \bar \bX_1+ \gamma)) -\bE_{\mathcal{L}_n}(I(V_1))+ \mathcal{R}_n(\mathbf{v}) \nonumber \\
    & =: H(\mathcal{L}_n)+ \mathcal{R}_n(\bv), \label{eq:define_h_ln}
\end{align}
where $\mathcal{R}_n(\bv)$ is a deterministic sequence such that $\sup_{\bv \in [-1,1]^n}|\mathcal{R}_n(\bv)| \rightarrow 0$ as $n \rightarrow \infty$.

Now, let $\mathcal{M}$ be the space of all probability distributions $(U,V, \bar T, \bar \bX)$ such that $U\sim U(0,1)$, $\bar T\sim \mathrm{Unif}(\pm 1)$, $\bar \bX\sim \bP_X$ and they are all independent. Note that, any subsequential limit of $\mathcal{L}_n$ belongs to the set $\mathcal{M}$. Since $I$ is lower semicontinuous, 
\begin{equation}
    \sup_{\bv \in [-1,1]^n} \frac{1}{n}\TT(\bv) \le \sup_{\mathcal{L} \in \mathcal{M}} H(\mathcal{L}).
\end{equation}
Using \eqref{eq:cha_dembo} and  Lemma~\ref{lem:efron_stein}, we obtain
\begin{equation} 
    \limsup\limits_{n \rightarrow \infty}\frac{1}{n} \mathbb{E}_{\bar \bT, \bar\bX}\log \tilde Z_n(\bar\bT, \bar\bX) \le \sup_{\mathcal{L} \in \mathcal{M}} H(\mathcal{L}).
\end{equation}
Finally, for any $\mathcal{L} \in \mathcal{M}$, denote the conditional expectation of $V|U,\bar{T}, \bar \bX$ as $F(U,\bar T, \bar \bX)$. This implies that $F\in \mathcal{F}$. Consequently, using the convexity of $I$ and Jensen's inequality, we have $H(\mathcal{L}) \le G_{W, \tau_0,\btheta_0,\gamma}(F)$. This implies $\sup_{\mathcal{L} \in \mathcal{M}} H(\mathcal{L}) \le \sup_{F \in \mathcal{F}} G_{W, \tau_0,\btheta_0,\gamma}(F)$, completing the proof of the upper bound.

\noindent 
\textbf{Lower bound:}
For any $\varepsilon>0$, there exists $F_\varepsilon \in \mathcal{F}$ such that $$\sup_{F\in \mathcal{F}} G_{W,\tau_0,\btheta_0,\gamma}(F) \le G_{W,\tau_0,\btheta_0,\gamma}(F_\varepsilon)+\varepsilon.$$ 
Define $n$ independent random variables $U_i \sim U(\frac{i-1}{n},\frac{i}{n}]$, $i =1, \ldots, n$. Define $v_i= F_\varepsilon(U_i,\bar T_i,\bar\bX_i)$. If $ \tilde \bv=(v_1,\ldots,v_n)$, then
\begin{align*}
    \frac{1}{n} \TT (\tilde \bv)= \frac{1}{n} \tilde \bv^\top \bA_n \tilde \bv+ \frac{1}{n} \sum_{i=1}^{n} v_i(\tau_0 \bar T_i +\btheta^\top_0 \bar \bX_i+\gamma)- \frac{1}{n} \sum_{i=1}^{n} I(v_i).
\end{align*}
The second and third summand above converges, in probability, to $\bE {F_\varepsilon(U,\bar T,\bar\bX)(\tau_0 \bar T+\btheta^\top_0 \bar \bX+\gamma)}$ and $\bE (I(F_\varepsilon(U, \bar T,\bar \bX)))$ respectively, where $U \sim U(0,1)$, $\bar T\sim \mathrm{Unif}(\pm 1)$, $\bar \bX\sim \bP_X$ and they are all independent. Here, we have used the fact that $I$ is bounded and continuous function. Also, since $U_i$'s are independent, so are $v_i$s. Since $n|\bA_{n}(i,j)| \le 1$ by Assumption~\ref{assn:interaction}, by a direct variance calculation, $\frac{1}{n} \tilde \bv^\top \bA_n \tilde \bv -\frac{1}{n} \bE (\tilde \bv)^\top \bA_n \bE(\tilde \bv) \xrightarrow{\bP} 0$. This implies
\begin{equation}\label{eq:u_to_f}
    \frac{1}{n} \TT (\tilde \bv) \xrightarrow{\bP} G_{W,\tau_0,\btheta_0,\gamma}(F_\varepsilon).
\end{equation}
Hence, with high probability,
\begin{align*}
    \sup_{F\in \mathcal{F}} G_{W,\tau_0,\btheta_0,\gamma}(F) &\le G_{W,\tau_0,\btheta_0,\gamma}(F_\varepsilon) +\varepsilon\\
    &\le \frac{1}{n} \TT ( \tilde \bv)+2 \varepsilon \le \sup_{\bv \in [-1,1]^n} \frac{1}{n} \TT (\bv)+2 \varepsilon\\
    & \le \liminf\limits_{n \rightarrow \infty}\frac{1}{n} \log \tilde Z_n(\bar \bT,\bar \bX) + 2 \varepsilon,
\end{align*}
where the final inequality is due to~\eqref{eq:cha_dembo}. This completes the proof of \eqref{eq:zn_limit}, since Lemma~\ref{lem:efron_stein} implies that
\begin{equation}\label{eq:zn_conc}
    \frac{1}{n}\log \tilde Z_n(\bar \bT,\bar \bX) - \frac{1}{n} \ee_{\bar \bT, \bar \bX} \log \tilde Z_n(\bar \bT,\bar \bX) \xrightarrow{\bP} 0.
\end{equation}
Further, setting $\gamma=0$, we have,
\begin{align*}
    \mathrm{DE}_{\infty}= \lim\limits_{n \rightarrow \infty} \mathrm{DE} &= \lim\limits_{n \rightarrow \infty} \frac{2}{n}\frac{\partial}{\partial \tau} \mathbb{E}_{\bar{\bT},\bar{\bX}} \log \tilde Z_n(\bar \bT,\bar \bX) \vert _{\tau=\tau_0} \\
    &\rightarrow 2\frac{\partial}{\partial \tau}  \sup_{F\in \mathcal{F}} G_{W,\tau_0,\btheta_0,0}(F)\vert _{\tau=\tau_0},
\end{align*}
as long as the supremum above is differentiable w.r.t. $\tau$ at $\tau= \tau_0$, since $ \log \tilde Z_n$ is a convex function by elementary properties of exponential families. This concludes the proof for direct effects.

Turning to the proof for indirect effects, we can follow the same argument above to show $\frac{1}{n}\ee_{\bar \bT, \bar \bX} \log\tilde Z_n(-\mathbf{1},\bX) \rightarrow \sup_{F\in \mathcal{F}} \tilde G_{W,\tau_0,\btheta_0,\gamma}(F)$. By Lemma~\ref{lem:de_define}, we have
\begin{align*}
    \mathrm{IE}_\infty = \lim\limits_{n \rightarrow \infty} \mathrm{IE} & = \lim\limits_{n \rightarrow \infty} \frac{1}{n} \bE_{  \bT,  \bX} \Big[\sum_{i=1}^{n} \langle \bY_i \rangle \Big]- \lim\limits_{n \rightarrow \infty} \frac{1}{n} \bE_{-\mathbf{1},  \bX} \Big[\sum_{i=1}^{n} \langle \bY_i \rangle \Big]- \frac{1}{2}\mathrm{DE}_\infty \\
    & = \lim\limits_{n \rightarrow \infty} \frac{1}{n}\frac{\partial}{\partial \gamma} \log \tilde Z_n(  \bT,   \bX)\Big\vert_{\gamma=0} + \lim\limits_{n \rightarrow \infty} 
 \frac{1}{n}\frac{\partial}{\partial \gamma} \log \tilde Z_n(\mathbf{-1},   \bX)\Big\vert_{\gamma=0}- \frac{1}{2}\mathrm{DE}_\infty\\
    &= \frac{\partial}{\partial \gamma} \sup_{F\in \mathcal{F}} G_{W,\tau_0, \btheta_0,\gamma}(F)\Big\vert_{\gamma=0}+ \frac{\partial}{\partial \gamma} \sup_{F\in \mathcal{F}} \tilde G_{W,\tau_0, \btheta_0,\gamma}(F)\Big\vert_{\gamma=0} - \frac{1}{2}\text{DE}_{\infty},
\end{align*}
as long as the supremum above is differentiable w.r.t. $\gamma$ at $\gamma=0$. This concludes the proof.

\end{proof}

\subsection{Proof of Theorem~\ref{thm:de_consistency}}
\label{sec:de_consistency_proof}

Our first result establishes that the causal effects $\mathrm{DE}$ and $\mathrm{IE}$ are stable under perturbations of the interaction matrix $\bA_n$. To track the dependence of the causal effects on $\bA_n$  explicitly, we denote them as $\mathrm{DE}^{\bA_n}$ and $\mathrm{IE}^{\bA_n}$ respectively.

\begin{lemma}\label{lem:de_ie_close}
    For any $\varepsilon>0$, there exists $\delta:= \delta(\varepsilon)>0$ such that if $\|\bA_n - \mathbf{B}_n\| < \delta$ then 
        \begin{align}
            |\mathrm{DE}^{\bA_n} - \mathrm{DE}^{\bB_n}| < \varepsilon, \,\,\,\,
            |\mathrm{IE}^{\bA_n} - \mathrm{IE}^{\bB_n}| < \varepsilon. \label{eq:A_stability}
        \end{align}
\end{lemma}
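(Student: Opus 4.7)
My plan is to express both causal effects as first partial derivatives of averaged log-partition functions and then to transfer uniform closeness of these functions (coming from $\|\bA_n-\bB_n\|<\delta$) into closeness of their derivatives via a convexity argument. For an interaction matrix $\bA$, define
\[
g_{\bA}(\tau,\gamma) := \tfrac{1}{n}\mathbb{E}_{\bar\bT,\bar\bX}\bigl[\log \tilde Z_n(\bar\bT,\bar\bX;\tau,\gamma)\bigr], \qquad \tilde g_{\bA}(\tau,\gamma) := \tfrac{1}{n}\mathbb{E}_{\bar\bX}\bigl[\log \tilde Z_n(-\mathbf{1},\bar\bX;\tau,\gamma)\bigr],
\]
with $\tilde Z_n$ as in~\eqref{eq:extra_gamma} computed with interaction $\bA$. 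By Lemma~\ref{lem:de_define}, $\mathrm{DE}^{\bA}=2\,\partial_\tau g_{\bA}(\tau,0)\big|_{\tau=\tau_0}$, while $\mathrm{IE}^{\bA}$ is the corresponding linear combination of $\partial_\gamma g_{\bA}(\tau_0,\gamma)\big|_{\gamma=0}$, $\partial_\gamma \tilde g_{\bA}(\tau_0,\gamma)\big|_{\gamma=0}$, and $-\tfrac{1}{2}\mathrm{DE}^{\bA}$.

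First I would establish uniform closeness at the level of the log-partitions. Interpolating $\bA(s)=s\bA_n+(1-s)\bB_n$ for $s\in[0,1]$ and using $\|\bY\|^2=n$ for $\bY\in\{\pm 1\}^n$,
\[
\left|\tfrac{d}{ds}\log \tilde Z_n(\bt,\bx;\tau,\gamma)\right|= \left|\bigl\langle \tfrac{1}{2}\bY^\top(\bA_n-\bB_n)\bY\bigr\rangle\right|\le \tfrac{n}{2}\|\bA_n-\bB_n\|,
\]
so integrating in $s$ and normalizing by $n$ gives $|g_{\bA_n}-g_{\bB_n}|\le \delta/2$ and $|\tilde g_{\bA_n}-\tilde g_{\bB_n}|\le \delta/2$ uniformly in $(\tau,\gamma)$ on any bounded set. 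Both $g_{\bA}$ and $\tilde g_{\bA}$ are convex in each of $\tau$ and $\gamma$ as averaged log-partitions of exponential families. I would then apply the elementary convex-analytic fact: if $u_1,u_2$ are convex with $|u_1-u_2|\le\eta$ on an open interval around $x_0$ and $u_2''\le M$ in that interval, then $|u_1'(x_0)-u_2'(x_0)|\le 2\sqrt{M\eta}$; this follows by comparing forward/backward difference quotients of $u_1$ to those of $u_2$, using a Taylor bound $u_2(x_0\pm h)\le u_2(x_0)\pm h u_2'(x_0)+Mh^2/2$, and optimizing $h=2\sqrt{\eta/M}$. With $\eta=\delta/2$, this produces $|\mathrm{DE}^{\bA_n}-\mathrm{DE}^{\bB_n}|,\,|\mathrm{IE}^{\bA_n}-\mathrm{IE}^{\bB_n}|\lesssim \sqrt{M\delta}$, which is below $\varepsilon$ for $\delta<\varepsilon^2/(4M)$.

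The main obstacle will be establishing a uniform-in-$n$ bound $M$ on the relevant second partial derivatives. By exponential-family identities,
\[
\partial_\tau^2 g_{\bA}(\tau,\gamma)=\tfrac{1}{n}\mathbb{E}_{\bar\bT,\bar\bX}\bigl[\bar\bT^\top\mathrm{Cov}(\bY\mid\bar\bT,\bar\bX)\bar\bT\bigr],
\]
with analogous expressions for the $\gamma$-derivatives. For $\partial_\tau^2 g_{\bA}$, the averaging over the independent Rademacher vector $\bar\bT$ produces essential cancellations: expanding $\mathrm{Cov}(\bY\mid\bar\bT,\bar\bX)$ in the Walsh--Fourier basis of $\bar\bT$ and using the orthogonality $\mathbb{E}[\bar T_i\bar T_j\chi_S(\bar\bT)]=\mathbf{1}(S=\{i,j\})$, the diagonal contribution is bounded by $\mathrm{Tr}(\mathbb{E}[\mathrm{Cov}(\bY\mid\bar\bT,\bar\bX)])\le n$, and the off-diagonal contributions are controlled under Assumption~\ref{assn:interaction} together with the boundedness of the parameter space. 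For $\partial_\gamma^2 \tilde g_{\bA}$, with $\bar\bT=-\mathbf{1}$ fixed there is no Rademacher averaging to exploit; here I would rely on the differentiability hypothesis at $\gamma=0$ (cf.\ Remark~\ref{rem:differentiability}) to stay away from the phase-transition regime where $\mathrm{Var}(\mathbf{1}^\top\bY)$ could be of order $n^2$, and a compactness argument would then combine these bounds into a single $\delta(\varepsilon)$ that works uniformly for both $\mathrm{DE}$ and $\mathrm{IE}$.
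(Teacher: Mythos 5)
Your skeleton coincides with the paper's: both arguments (i) establish the uniform free-energy stability $\sup_{\tau,\gamma}|F^{\bA_n}_n-F^{\mathbf{B}_n}_n|\le\|\bA_n-\mathbf{B}_n\|$ (the paper does this via the one-line bound $|\by^\top(\bA_n-\mathbf{B}_n)\by|\le n\|\bA_n-\mathbf{B}_n\|$; your interpolation in $s\bA_n+(1-s)\mathbf{B}_n$ gives the same thing), (ii) represent $\mathrm{DE}$ and $\mathrm{IE}$ as $\tau$- and $\gamma$-derivatives of these averaged free energies, and (iii) pass from $\eta$-closeness of convex functions to closeness of their derivatives by comparing one-sided difference quotients against a second-derivative bound, ending with $\delta\asymp\varepsilon^2/M$. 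Your convex-analytic lemma $|u_1'(x_0)-u_2'(x_0)|\le 2\sqrt{M\eta}$ is exactly the paper's finite-difference computation with the step size optimized.

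The gap is in step (iii): you correctly identify the uniform-in-$n$ second-derivative bound $M$ as "the main obstacle," but you do not prove it, and neither of your two fallbacks closes it. The Walsh--Fourier sketch for $\partial_\tau^2 g_{\bA}$ only asserts that the off-diagonal contributions are "controlled"; and for $\partial_\gamma^2\tilde g_{\bA}$ you appeal to "the differentiability hypothesis at $\gamma=0$" and a compactness argument. That appeal is not available here: the lemma is a non-asymptotic statement about two fixed $n\times n$ matrices with $\delta$ depending on $\varepsilon$ alone, there is no differentiability hypothesis in its statement, and no limiting object whose regularity one could invoke. The paper dispatches your obstacle in one line via the exponential-family identity
\begin{align*}
\ee_{\bar\bT,\bar\bX}\,\frac{\partial^2}{\partial\tau^2}F^{\mathbf{B}_n}_n(\tau,\btheta_0,0)\Big\vert_{\tau=\tilde\tau}
=\ee_{\bar\bT,\bar\bX}\,\mathrm{Var}_{\tilde\tau}\Big(\frac{1}{n}\sum_{i=1}^{n}T_iY_i\Big)\le 1,
\end{align*}
and the analogous identity in $\gamma$, taking $M=1$. (Your instinct that this step is delicate is sound: with $F_n=\frac{1}{n}\log Z_n$ the second derivative equals $\frac{1}{n}\bt^\top\mathrm{Cov}(\bY)\bt$, so the asserted bound amounts to controlling $\|\mathrm{Cov}(\bY)\|$ or exploiting the averaging over $\bar\bT$ exactly as you describe. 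But some version of this bound must be proved, not deferred; without it your $\delta(\varepsilon)$ cannot be chosen independently of $n$, and the conclusion of the lemma fails to be uniform.) As written, the proposal locates the right quantity but leaves unproved the one estimate that carries all the content.
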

We defer the proof of this lemma to the Appendix. 
\begin{proof}[Proof of Theorem~\ref{thm:de_consistency}]
   We first prove that the estimator $\widehat{\mathrm{DE}}_{(r,\delta)}$ is close to $\mathrm{DE}^{\bA_n}$. 
%
Fix $\delta>0$, to be specified later. Using Lemma~\ref{lem:block_approx}, we obtain $(r,\delta)$-block-approximation of $\bA_n$, denoted by $\widetilde{\bA}_n$, in $O(\delta^{-O(1)} n^2 + n r)$ time. By the defintion of block-approximation, we have $\|\bA_n - \widetilde{\bA}_n\| < \delta$. 
Therefore, by Lemma~\ref{lem:graph_approx_zn} 
\begin{equation}\label{eq:de_closeness}
    |\mathrm{DE}^{\bA_n}- \mathrm{DE}^{\widetilde{\bA}_n}|<\varepsilon.
\end{equation}
Since $\widetilde{\bA}_n$ is a block matrix, by~\eqref{eq:de_hat_define} and~\eqref{eq:define_part_sum_1}, we have
\begin{align*}
     \ee_{\overline{\bT}, \overline{\bX}}[\widehat{\mathrm{DE}}_{(r,\delta)}] &=\frac{2}{n} \sum_{a=1}^{m} \sum_{k=0}^{2^r} (\ee_{\overline{\bT}, \overline{\bX}}[\langle V_{a,k,+} \rangle_{(r,\delta)}]- \ee_{\overline{\bT}, \overline{\bX}}[\langle V_{a,k,-}\rangle_{(r,\delta)}] )\\
     &= \frac{2}{n}\ee_{\overline{\bT}, \overline{\bX}}\Big[ \sum_{a=1}^{m} \sum_{k=0}^{2^r} \langle \sum_{\ell \in  \mathcal{A}_{a,k,+}} y_{\ell} \rangle_{(r,\delta)}-  \sum_{a=1}^{m} \sum_{k=0}^{2^r}\langle \sum_{\ell \in  \mathcal{A}_{a,k,-}} y_{\ell}\rangle_{(r,\delta)}  \Big]\\
     &= \frac{2}{n}\ee_{\overline{\bT}, \overline{\bX}}\Big[ \sum_{a=1}^{m} \sum_{k=0}^{2^r} \langle \sum_{\ell \in  \mathcal{A}_{a,k,+}} \bar{T}_{\ell} y_{\ell} \rangle_{(r,\delta)}+ \sum_{a=1}^{m} \sum_{k=0}^{2^r}\langle \sum_{\ell \in  \mathcal{A}_{a,k,-}} \bar{T}_\ell y_{\ell}\rangle_{(r,\delta)} \Big] \nonumber \\
     &= \frac{2}{n}\ee_{\overline{\bT}, \overline{\bX}}\Big[ \sum_{a=1}^{m} \sum_{k=0}^{2^r} \sum_{\ell \in  \mathcal{A}_{a,k,+}} \langle  \bar{T}_{\ell} y_{\ell} \rangle_{(r,\delta)}+ \sum_{a=1}^{m} \sum_{k=0}^{2^r} \sum_{\ell \in  \mathcal{A}_{a,k,-}} \langle \bar{T}_\ell y_{\ell}\rangle_{(r,\delta)} \Big] \nonumber \\
     &= \frac{2}{n} \mathbb{E}_{\bar{\bT}, \bar{\bX}} \Big[ \sum_{\ell=1}^{n} \langle \bar{T}_{\ell} y_{\ell} \rangle_{(r,\delta)}  \Big]  = \frac{2}{n} \mathbb{E}_{\bar{\bT}, \bar{\bX}} \Big[ \sum_{\ell=1}^{n} \langle \bar{T}_{\ell} \bY_{\ell} \rangle_{(r,\delta)}  \Big] = \mathrm{DE}^{\tilde{\bA}_n}. \nonumber  
\end{align*}
The desired conclusion follows upon combining the previous display with~\eqref{eq:de_closeness}. 
The conclusion $\widehat{\mathrm{IE}}_{(r,\delta)}$ follows directly from Lemma~\ref{lem:de_ie_close} since $\ee_{\overline{\bT}, \overline{\bX}} [\widehat{\mathrm{IE}}_{(r,\delta)}] = \mathrm{IE}^{\widetilde{\bA}_n}$ by a similar argument.  
\end{proof}

\subsection{Proof of Theorem~\ref{cor:de_answer}}
\label{proof:sk_limit}

\begin{proof}[Proof of Theorem~\ref{cor:de_answer}]
We start with the Direct effect $\mathrm{DE}$. Using Lemma~\ref{lem:de_define}, we have that 
\begin{align}
    \mathrm{DE} = \frac{2}{n} \frac{\partial}{\partial \tau}\mathbb{E}_{\bar{\bT}, \bar{\bX}} [ \log \tilde{Z}_n(\bar{\bT}, \bar{\bX})]\Big|_{\tau=\tau_0, \gamma=0}. \nonumber 
\end{align}
Using direct computation, we have that at $\gamma=0$, $\mathbb{E}_{\bar{\bT}, \bar{\bX}}[\log \tilde{Z}_n(\bar{\bT}, \bar{\bX})]$ is a convex function of $\tau$. Combining this with \eqref{eq:parisi_answer} and \eqref{eq:define_upsilon}, we have the desired conclusion if the function $\tau \mapsto \upsilon(\tau,0)$ is differentiable in $\tau$ at $\tau=\tau_0$. Using~\cite[Lemma 16]{jagannath2016dynamic}, we have that for any $\mu \in \mathcal{P}([0,1])$, the solution to the Parisi PDE $\Phi_{\mu}(t,x)$ is differentiable in $x$ and $\|\partial_x \Phi_{\mu}\|
_{\infty} \leq 1$. Using Dominated Convergence Theorem, we have that for any $\mu \in \mathcal{P}([0,1])$, 
\begin{align}
   \frac{\partial}{\partial \tau}P_{\tau,\btheta_0,0}(\mu) \Big|_{\tau=\tau_0}= \mathbb{E}[T \partial_x\Phi_{\mu}(0,\tau_0 T + H)]. \nonumber 
\end{align}
The desired conclusion now follows by an application of Danskin's envelope theorem~\cite{bernhard1995theorem}. 

For the indirect effect, using Lemma~\ref{lem:de_define} we have 
\begin{align}
    \mathrm{IE} = \frac{1}{n}\frac{\partial}{\partial \gamma} \mathbb{E}_{\bar{\bT}, \bar{\bX}}[\log \tilde{Z}_n(\bar{\bT}, \bar{\bX})]\Big|_{\gamma=0} - \frac{1}{n}  \frac{\partial}{\partial \gamma} \mathbb{E}_{\bar{\bX}}[\log \tilde{Z}_n(-\mathbf{1}, \bar{\bX})]\Big|_{\gamma=0} - \frac{1}{2} \mathrm{DE}. \nonumber 
\end{align}
By direct computation, it follows that $\mathbb{E}_{\bar{\bT},\bar{\bX}}[\log \tilde{Z}_n(\bar{\bT}, \bar{\bX})]$ and $\mathbb{E}_{\bar{\bX}}[\log \tilde{Z}_n(-\mathbf{1}, \bar{\bX})]$
are convex functions in $\gamma$. The desired conclusion thus follows if $\upsilon(\tau_0, \gamma)$ and $\widehat{\upsilon}(\tau_0, \gamma)$ are differentiable in $\gamma$ at $\gamma=0$. The rest of the argument is the same as that outlined earlier for the differentiability of $\upsilon(\tau,0)$ in $\tau$---we use the $\ell^\infty$-boundedness of $\partial_x\Phi_\mu$, Dominated Convergence and Danskin's Envelope Theorem to conclude the proof.    
\end{proof}

\subsection{Proof of Theorem~\ref{thm:amp}}
\label{proof:amp}
We begin by noting the state evolution equations for the AMP iterate~\eqref{eq:amp_iterate}. Recall the definition of the function $g$ from~\eqref{eq:g_define}. Suppose $G_1,G_2,G_3 \sim N(0,1)$ independent and $\bar T \sim \mathrm{Unif}(\pm 1)$, $H = \btheta^\top_0 \bX_1$, $\bX_1 \sim \mathbb{P}_X$. Define 
\begin{equation}
    \phi(t)= \beta^2 \bE \left[g\Big(\tau_0 \bar T+ H+ G_1 \sqrt{t}+ G_2 \sqrt{\beta^2 q -t}\Big) g\Big(\tau_0 \bar T+ H+ G_1 \sqrt{t}+ G_3 \sqrt{\beta^2 q -t}\Big)\right]
\end{equation}
for $t \leq \beta^2 q$. Define a sequence of real numbers $(a_k)^{\infty}_{k=0}$ as $a_0=0$, $a_{k+1}= \phi(a_k)$. Using~\cite[Lemma 3.4]{sellke2024optimizing}, we obtain that the sequence $a_k$ is increasing with 
\[\lim\limits_{k\rightarrow \infty} a_k= \beta^2 q.\] The sequence $a_k$ identifies the state evolution limits of our Algorithm~\ref{alg:amp}. More precisely, given any $k\in \mathbb{N}$, suppose the limits of $(\bh_1,\bh_2, \bw^{-1}, \bw^0, \bx^0, \bm^0, \ldots, \bw^k, \bx^k, \bm^k)$ are given by $(H_1,H_2, W^{-1}, W^0, X^0, M^0, \ldots, W^k, X^k, M^k)$ as $n \rightarrow \infty$. Using~\cite[Lemma 3.3]{sellke2024optimizing}, we obtain tha each $W^j$ is a Gaussian random variable with $\bE[W^j]=0$, $X^j = W^j+ H_1+H_2$, $M^{j+1}= g(X^j)$. Further, the following holds for $j < k$,
\begin{align}\label{eq:state_evol_var}
    &\mathrm{Var}[W^j]= \beta^2 q, \quad \bE[W^jW^k]= a_j, \nonumber\\
    &\bE[(M^j)^2]= q, \quad \bE[M^j M^k]= \frac{1}{\beta^2} \phi(a_j).
\end{align}
Note that $(W^k)^{\infty}_{k=0}$ is independent of $(H_1,H_2)$, where $H_1 \sim  \tau_0 \bar T$, $H_2 \sim \bar \bX^\top_1 \btheta_0$. It immediately follows that
\begin{align*}
   \lim\limits_{M \rightarrow \infty} \lim\limits_{n \rightarrow \infty} \frac{1}{n} \bar \bT^\top \bm^M  = \lim\limits_{M \rightarrow \infty} \lim\limits_{n \rightarrow \infty}\frac{1}{n \tau_0} \bh^\top_1 \bm^M = \bE [\bar T \partial_x \Phi_{\mu^\star}(q, H_1+H_2+ Z \beta \sqrt{q})],
\end{align*}
in probability. Let $\{X_t: t \in [0,1]\}$ solve the SDE 
\begin{align}
    dX_t = \beta^2 \mu^{\star}_{\tau_0,0}(t) dt + \beta dW_t , \nonumber 
\end{align}
where $\{W_t : t \in [0,1]\}$ represents Brownian motion. Recalling that $q = \inf(\mathrm{supp}(\mu^{\star}_{\tau_0,0}))$, we have
\begin{align}
    X_q = X_0 + \beta W_q. \nonumber 
\end{align}
In turn, this implies 
\begin{align}
    \mathbb{E}[\bar T \partial_x \Phi_{\mu^\star}(q, H_1+H_2 + Z \beta \sqrt{q})] = \mathbb{E}[\mathbf{T}_1 \partial_x \Phi_{\mu^\star}(q, X_q)], \nonumber 
\end{align}
where $X_0 = \mathbf{h}_1 + \mathbf{h}_2$. 
Finally, we recall that the process $ \partial_x \Phi_{\mu^\star}(t, X_t)$ is a martingale~\cite{auffinger2015parisi},\cite{jagannath2016dynamic}, implying that
\begin{align*}
    \bE [\bar T \partial_x \Phi_{\mu^\star}(q, H_1+H_2+ Z \beta \sqrt{q})]= \bE [\bar T \partial_x \Phi_{\mu^\star}(0, H_1+H_2)]
\end{align*}
Therefore using Theorem~\ref{cor:de_answer}, we obtain that
\begin{equation*}
    \Big| \ee_{\bar{\bT},\bar{\bX}} [\widehat{\mathrm{DE}}_M] - \mathrm{DE} \Big| \stackrel{\mathscr{P}_{n,M}}{\longrightarrow} 0
\end{equation*}
To show the consistency of $\widehat{\mathrm{IE}}_M$, note that by the argument above, we also have
\begin{align*}
   \lim\limits_{M \rightarrow \infty} \lim\limits_{n \rightarrow \infty} \frac{1}{n} \mathbf{1}^\top \bm^M = \bE [ \partial_x \Phi_{\mu^\star_{\tau_0,0}}(q, H_1+H_2+ Z \beta \sqrt{q})].
\end{align*}
Similarly, by replacing $\mu^\star_{\tau_0,0}$ with $\widehat{\mu}^\star_{\tau_0,0}$, we obtain that 
\begin{align*}
   \lim\limits_{M \rightarrow \infty} \lim\limits_{n \rightarrow \infty} \frac{1}{n} \mathbf{1}^\top \overline{\bm}^M = \bE [ \partial_x \Phi_{\widehat{\mu}^\star_{\tau_0,0}}(\widehat q, H_1+H_2+ Z \beta \sqrt{q})].
\end{align*}
Using Theorem~\ref{cor:de_answer} and the martingale property, we have the desired conclusion.

\subsection{Proof of Theorem~\ref{lem:mpl_plug_in}}
\label{sec:estimation_proof}
We prove Theorem~\ref{lem:mpl_plug_in} in this section.  
\begin{proof}[Proof of Theorem~\ref{lem:mpl_plug_in}]
    Using~\cite[Theorem 2.3]{bhattacharya2024causal} we have
\begin{equation}\label{eq:root_n}
    \|(\tau_0,\btheta_0) -(\hat \tau_{\text{MPL}}, \hat {\boldsymbol{\theta}}_{\text{MPL}})\| = O_{\mathbb{P}}(n^{-1/2}).
\end{equation}

Note that we use the same block-approximation for both the estimators $\widehat{\mathrm{DE}}_{(r,\delta)}(\tau_0, \btheta_0)$ and $\widehat{\mathrm{DE}}_{(r,\delta)}(\hat \tau_{\mathrm{MPL}}, \hat{\btheta}_{\mathrm{MPL}})$ in Algorithm~\ref{algo:de_method}. Further, $(\tau,\btheta)\mapsto f_{(r,\varepsilon)}$ defined by~\eqref{eq:simplified_gibbs} is continuous. Therefore, the conditional expectation $(\langle V_{a,k,+} \rangle_{(r,\varepsilon)}, \langle V_{a,k,+} \rangle_{(r,\varepsilon)})$, $a \in [m], k \in 0 \cup [2^r]$ is a continuous function of $(\tau,\btheta)$. Hence, the conclusion for $\widehat{\mathrm{DE}}_{(r,\delta)}$ follows from~\ref{eq:root_n}. A similar argument yields the conclusion for $\widehat{\mathrm{IE}}_{(r,\delta)}$.
 
Lemma~\ref{lem:amp_stable} in the Appendix shows that our AMP method (Algorithm~\ref{alg:amp}) is stable w.r.t. magnetization. Hence the conclusion for~ $\widehat{\mathrm{DE}}_M$ and $\widehat{\mathrm{IE}}_M$ given by~\eqref{eq:de_amp} and~\eqref{eq:ie_amp} follows immediately by~\eqref{eq:root_n}.

\end{proof}

\bibliography{ref}

\section*{Appendix}
This appendix contains the proofs of some results omitted in the main paper. We begin with some stability lemmas that serve as key tools for our main proofs. Next, we establish the validity of our algorithm for general compactly supported covariates. 

\subsection*{Stability Lemmas} 
To state our result, consider a Markov Random Field with interaction matrix $\bA_n$ and external field $\mathbf{h}: = (h_1, h_2, \ldots, h_n) \in \mathbb{R}^n$. For this model, we define, 
\begin{align}
     &Z^{\bA_n}_n (\mathbf{h})= \sum\limits_{\by \in \{-1,1\}^n}\exp\Big(\frac{1}{2}\,\mathbf{y}^\top \mathbf{A}_n \mathbf{y}+ \sum_{i=1}^{n} h_i y_i \Big) 
     \label{eq:define_zn_an} \\
     & F^{\bA_n}_n(\mathbf{h}) = \frac{1}{n} \log Z^{\bA_n}_n(\mathbf{h}), \label{eq:define_fn}
\end{align}
We require the following stability of the log-normalizing constant $F^{\bA_n}_n(\bh)$ w.r.t. interaction matrices and random field:
\begin{lemma}\label{lem:graph_approx_zn}
    Let $\bA_n$, $\mathbf{B}_n$ be two $n \times n$ matrices, and $\bh ,\tilde \bh \in \mathbb R^n$. Suppose $F^{\bA_n}_n(\bh) $ and $F^{\mathbf{B}_n}_n(\tilde \bh)$ are defined as in \eqref{eq:define_fn}. 
    Then we have
    \[
     \left \vert F^{\mathbf{A}_n}_n(\bh) - F^{\mathbf{B}_n}_n(\tilde \bh)  \right \vert \le \Big(\|\bA_n - \mathbf{B}_n\|+ \|\bh -\tilde \bh\|_{\infty}\Big).
    \]
\end{lemma}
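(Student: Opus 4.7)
The plan is to use a one-parameter linear interpolation between the two parameter configurations and control the derivative of the log-normalizing constant along the path. Specifically, define $\mathbf{A}(t) = (1-t)\mathbf{B}_n + t\mathbf{A}_n$ and $\mathbf{h}(t) = (1-t)\tilde{\mathbf{h}} + t\mathbf{h}$ for $t \in [0,1]$, and set $\phi(t) := F_n^{\mathbf{A}(t)}(\mathbf{h}(t))$, so that the quantity to bound is $|\phi(1) - \phi(0)| \le \int_0^1 |\phi'(t)|\, dt$.

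Next I would compute $\phi'(t)$ using the standard exponential-family identity: differentiating the log-partition function brings down the expectation of the sufficient statistics under the Gibbs measure $\mu_t$ corresponding to $(\mathbf{A}(t), \mathbf{h}(t))$. Concretely, this yields
\begin{equation*}
\phi'(t) = \frac{1}{n} \mathbb{E}_{\mathbf{y} \sim \mu_t}\!\left[\tfrac{1}{2}\mathbf{y}^\top (\mathbf{A}_n - \mathbf{B}_n)\mathbf{y} + \sum_{i=1}^n (h_i - \tilde h_i)\, y_i\right].
\end{equation*}
The point is that $\phi'(t)$ is an average of two terms, each of which I can bound deterministically uniformly in $\mathbf{y} \in \{\pm 1\}^n$, bypassing any delicate properties of $\mu_t$.

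For the first term I would use the elementary operator-norm bound $|\mathbf{y}^\top(\mathbf{A}_n - \mathbf{B}_n)\mathbf{y}| \le \|\mathbf{y}\|^2\, \|\mathbf{A}_n - \mathbf{B}_n\|$, combined with $\|\mathbf{y}\|^2 = n$ for $\mathbf{y} \in \{\pm 1\}^n$, giving a contribution of at most $\tfrac{1}{2}\|\mathbf{A}_n - \mathbf{B}_n\|$ after the $1/n$ normalization. For the second term, $|\sum_i (h_i - \tilde h_i) y_i| \le n\|\mathbf{h} - \tilde{\mathbf{h}}\|_\infty$, contributing $\|\mathbf{h} - \tilde{\mathbf{h}}\|_\infty$. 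Integrating $|\phi'(t)|$ over $t \in [0,1]$ yields the stated bound (in fact with a $1/2$ in front of the spectral term, which is stronger than needed).

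There is no real obstacle here: the only thing to be careful about is remembering that the operator norm bound on $\mathbf{y}^\top C\mathbf{y}$ picks up a factor of $\|\mathbf{y}\|^2 = n$, which is exactly what cancels the $1/n$ normalization to produce a dimension-free constant. The argument is robust enough that it applies verbatim whether one interpolates only in $\mathbf{A}$, only in $\mathbf{h}$, or simultaneously in both, so the additive form of the bound falls out of the triangle inequality on $|\phi'(t)|$.
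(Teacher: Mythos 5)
Your proof is correct and follows essentially the same perturbation argument as the paper: the paper splits the difference via the triangle inequality, handling the field perturbation by interpolation and the matrix perturbation by directly sandwiching the exponent with $|\mathbf{y}^\top(\mathbf{A}_n-\mathbf{B}_n)\mathbf{y}|\le n\|\mathbf{A}_n-\mathbf{B}_n\|$, whereas you unify both steps into a single simultaneous interpolation. The uniform bounds you use are identical to the paper's, and your version even recovers the slightly sharper constant $\tfrac{1}{2}$ on the spectral term.
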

\begin{proof}
By triangle inequality, we have
\begin{align}\label{eq:stable_triangle_ineq}
 \left \vert F^{\mathbf{A}_n}_n(\bh) - F^{\mathbf{B}_n}_n(\tilde \bh)  \right \vert \le  \left \vert F^{\mathbf{A}_n}_n(\bh) - F^{\mathbf{A}_n}_n(\tilde \bh)  \right \vert +\left \vert F^{\mathbf{A}_n}_n(\tilde \bh) - F^{\mathbf{B}_n}_n(\tilde \bh)  \right \vert =: \mathcal{T}_1+ \mathcal{T}_2.
\end{align}
We bound $\mathcal{T}_1, \mathcal{T}_2$ separately below. 

\noindent \textbf{Upper bound of $\mathcal{T}_1$:}
The proof is by interpolation. For $\kappa \in [0,1]$, define
    \begin{equation}
        H_n(\kappa) = \sum\limits_{\by \in \{-1,1\}^n}\exp\Big(\frac{1}{2}\,\mathbf{y}^\top \mathbf{A}_n \mathbf{y}+ \sum_{i=1}^{n} y_i (\kappa h_i+ (1-\kappa)\tilde h_i)\Big),
    \end{equation}
    This implies $\frac{1}{n} \log H_n(0)= F^{\mathbf{A}_n}_n(\mathbf{h})$ and  $\frac{1}{n} \log H_n(1)= F^{\mathbf{A}_n}_n(\mathbf{\tilde h})$. Further, we have
    \begin{align*}
        \Big|\frac{\partial}{\partial \kappa} \log  H_n(\kappa)\Big| \le \sqrt{n}\| \mathbf{h} -\mathbf{\tilde h}\|_2 \le  n \| \mathbf{h} -\mathbf{\tilde h}\|_\infty,
    \end{align*}
    since $|y_i| \le 1$. This implies
    \begin{align*}
        \left \vert F^{\mathbf{A}_n}_n(\mathbf{h}) - F^{\mathbf{A}_n}_n(\mathbf{\tilde h})  \right \vert \le \frac{1}{n} \int_{0}^{1}\Big|\frac{\partial}{\partial \kappa} \log  H_n(\kappa)\Big| d\kappa \le \| \mathbf{h} -\mathbf{\tilde h}\|_\infty.
    \end{align*}
    This provides the necessary upper bound of $\mathcal{T}_1$.

\noindent \textbf{Upper bound of $\mathcal{T}_2$:} By definition of $F^{\bA_n}_n$ in~\eqref{eq:define_fn}, we have
   \begin{align*}
       F^{\bA_n}_n (\tilde \bh)&= \frac{1}{n} \log \sum\limits_{\by \in \{-1,1\}^n}\exp\Big(\frac{1}{2}\,\mathbf{y}^\top \mathbf{A}_n \mathbf{y}+ \sum_{i=1}^{n} \tilde h_i y_i \Big) \\
       &= \frac{1}{n}  \log \sum\limits_{\by \in \{-1,1\}^n}\exp\Big(\frac{1}{2}\,\mathbf{y}^\top \mathbf{B}_n \mathbf{y}+ \sum_{i=1}^{n} \tilde h_i y_i + \by^\top (\bA_n -\mathbf{B}_n) \by \Big) \\ 
& \le \frac{1}{n} \log \left [e^{n \|\bA_n-\mathbf{B}_n\|}\sum\limits_{\by \in \{-1,1\}^n}\exp\Big(\frac{1}{2}\,\mathbf{y}^\top \mathbf{B}_n \mathbf{y}+ \sum_{i=1}^{n} h_i y_i  \Big)\right]\\
 & \le F^{\mathbf{B}_n}_n(\tilde \bh) + \|\bA_n-\mathbf{B}_n\| ,
   \end{align*}
where the first inequality uses $\by^\top (\bA_n- \mathbf{B}_n) \by \le \|\bA_n-\mathbf{B}_n\| \|\by\|^2 \le n \|\bA_n-\mathbf{B}_n\|$. Similarly, one can show a lower bound, i.e, $F^{\bA_n}_n(\tilde \bh) \ge F^{\mathbf{B}_n}_n (\tilde \bh)- \|\bA_n-\mathbf{B}_n\|$. This provides the required upper bound of $\mathcal{T}_2$, completing the proof of the Lemma.  
\end{proof}
We now prove Lemma~\ref{lem:de_ie_close} using Lemma~\ref{lem:graph_approx_zn}.
We use the notation $\text{DE}^{\bA_n}$ and $\text{IE}^{\bA_n}$ when direct and indirect effects are computed w.r.t. interaction matrices $\bA_n$. For the remaining results, we will choose $h_i =\tau T_i + \btheta^\top_0 \bX_i +\gamma$ following~\eqref{eq:define_gibbs}. To highlight the dependence on $(\tau, \btheta_0,\gamma)$, we use the notation $F^{\bA_n}_n(\tau,\btheta_0,\gamma)$.

\begin{proof}[Proof of Lemma~\ref{lem:de_ie_close}]
   Fix $\delta>0$ to be chosen later. Since $\|\bA_n-\bB_n\| \le \delta$, Lemma~\ref{lem:graph_approx_zn} implies that 
   \begin{equation}\label{eq:fn_close}
       \max_{\tau, \gamma \in [-1,1]} \left \vert F^{\mathbf{A}_n}_n(\tau,\btheta_0,\gamma) - F^{\mathbf{B}_n}_n(\tau,\btheta_0,\gamma)  \right \vert  \le \delta.
   \end{equation}
We have 
    \begin{equation}
        \text{DE}^{\mathbf{u}}=2 \ee_{\bar\bT,\bar\bX}\frac{\partial}{\partial \tau} F^{\mathbf{u}}_n(\tau,\btheta_0,0)\Big\vert_{\tau=\tau_0},
    \end{equation} 
for $\mathbf{u} \in \{\bA_n,\mathbf{B}_n\}$. Since $F^{\bA_n}_n(\tau, \btheta,0)$ and $F^{\mathbf{B}_n}_n(\tau, \btheta,0)$ are convex functions in first coordinate, we obtain for any fixed $ \eta >0$,

\begin{equation*}
   \frac{\ee_{\bar\bT,\bar\bX}F^{\bA_n}_n(\tau_0,\btheta_0,0) -\ee_{\bar\bT,\bar\bX}F^{\bA_n}_n(\tau_0- \eta,\btheta_0,0)}{\eta}  \le \frac{\text{DE}^{\mathbf{A}_n}}{2} \le \frac{\ee_{\bar\bT,\bar\bX}F^{\bA_n}_n(\tau_0+ \eta,\btheta_0,0) -\ee_{\bar\bT,\bar\bX}F^{\bA_n}_n(\tau_0,\btheta_0,0)}{\eta}.
\end{equation*}
Similar bounds hold for $\mathbf{B}_n$ as well. therefore, we obtain

\begin{align*}
   &\frac{\text{DE}^{\mathbf{A}_n} -\text{DE}^{\mathbf{B}_n}}{2} \\
   &\le \frac{(\ee_{\bar\bT,\bar\bX}F^{\bA_n}_n(\tau_0+ \eta,\btheta_0,0) -\ee_{\bar\bT,\bar\bX}F^{\bA_n}_n(\tau_0,\btheta_0,0))- (\ee_{\bar\bT,\bar\bX}F^{\mathbf{B}_n}_n(\tau_0,\btheta_0,0) -\ee_{\bar\bT,\bar\bX}F^{\mathbf{B}_n}_n(\tau_0 -\eta,\btheta_0,0))}{\eta} \\ 
   &\le   \frac{(\ee_{\bar\bT,\bar\bX}F^{\mathbf{B}_n}_n(\tau_0+ \eta,\btheta_0,0) -\ee_{\bar\bT,\bar\bX}F^{\mathbf{B}_n}_n(\tau_0,\btheta_0,0))- (\ee_{\bar\bT,\bar\bX}F^{\mathbf{B}_n}_n(\tau_0,\btheta_0,0) -\ee_{\bar\bT,\bar\bX}F^{\mathbf{B}_n}_n(\tau_0 -\eta,\btheta_0,0))+2\delta}{\eta}\\
   &=\frac{\eta}{2} \ee_{\bar\bT,\bar\bX}\frac{\partial^2}{\partial^2 \tau} F^{\mathbf{B}_n}_n(\tau,\btheta_0,0)\Big\vert_{\tau=\tilde\tau} + \frac{2\delta}{\eta},
\end{align*}
for some $\tilde{\tau} \in [\tau_0-\eta,\tau_0+\eta]$. Further we have, 
\begin{align*}
    \ee_{\bar\bT,\bar\bX}\frac{\partial^2}{\partial^2 \tau} F^{\mathbf{B}_n}_n(\tau,\btheta_0,0)\Big\vert_{\tau=\tilde\tau}=  \ee_{\bar\bT,\bar\bX} \text{Var}_{\tilde \tau}\left(\frac{1}{n} \sum_{i=1}^{n} T_i Y_i\right) \le 1.
\end{align*}
Therefore, we obtain
\begin{align*}
    \text{DE}^{\mathbf{A}_n} -\text{DE}^{\mathbf{B}_n}  \le 2\eta+ \frac{4\delta}{ \eta}
\end{align*}
Choosing $\eta=\varepsilon/4$ and $\delta= \varepsilon^2/32$ yields $ \text{DE}^{\mathbf{A}_n} -\text{DE}^{\mathbf{B}_n}  \le \varepsilon$. Similarly, a lower bound can be obtained for $\text{DE}^{\mathbf{A}_n}- \text{DE}^{\mathbf{B}_n}$ proving the stability of direct effect. 

Turning to the proof of indirect effect, note that by Lemma~\ref{lem:graph_approx_zn} we have
\[\max_{\gamma \in [-1,1]} \left \vert F^{\mathbf{A}_n}_n(\tau_0,\btheta_0,\gamma) - F^{\mathbf{B}_n}_n(\tau_0,\btheta_0,\gamma)  \right \vert  \le \delta.\]
Using Lemma~\ref{lem:de_define}, we obtain,
\begin{align}
    \mathrm{IE}^{\mathbf{A}_n}= \ee_{\bar\bT,\bar\bX}\frac{\partial}{\partial \gamma} \tilde F^{\bA_n}(\tau_0,\btheta_0, \gamma)\Big\vert_{\gamma=0} +\ee_{-\mathbf{1},\bar\bX} \frac{\partial}{\partial \gamma} \tilde F^{\bA_n}_{n}(\tau_0,\btheta_0,\gamma)\Big\vert_{\gamma=0} -\mathrm{DE}^{\mathbf{A}_n}.
\end{align}
We invoke the same argument used above for direct effect for each of the summands individually. Since the first two summands are convex in $\gamma$, this complete the proof. 
\end{proof}

We next prove a stability lemma regarding the AMP iterates. To state our result, we need a technical lemma whose proof is deferred to the end of this section. We begin by metrizing weak convergence on the space of probability measures on $[0,1]$ with the metric
\begin{equation}
    d(\mu,\nu)= \int_{0}^{1}\Big|\mu[0,s]-\nu[0,s]\Big| ds.
\end{equation}
The following lemma provides continuity bounds on the solution of the Parisi PDE.
\begin{lemma}\label{lem:parisi_pde_cont}
    Consider two probability measures $\mu,\nu$ on $[0,1]$. Let $\Phi_{\mu}, \Phi_{\nu}$ be the solutions of the Parisi PDE corresponding to $\mu$ and $\nu$ respectively. There exists $C= C(\beta)>0$ such that
\begin{equation}\label{eq:pde_continuity}
    \max\left\{\|\Phi_{\mu}-\Phi_{\nu}\|_{\infty},\|\partial_x\Phi_{\mu}-\partial_x\Phi_{\nu}\|_{\infty},\|\partial_{xx}\Phi_{\mu}-\partial_{xx}\Phi_{\nu}\|_{\infty}\right\} \le C d(\mu, \nu).
\end{equation}
\end{lemma}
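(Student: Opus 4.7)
The approach will be to derive and solve a linear parabolic difference equation for $D := \Phi_\mu - \Phi_\nu$, and then repeat the exercise for its spatial derivatives. First, I would establish $\mu$-independent a priori bounds
$$\|\partial_x \Phi_\mu\|_\infty \le 1, \qquad \|\partial_{xx}\Phi_\mu\|_\infty \le 1, \qquad \|\partial_{xxx}\Phi_\mu\|_\infty \le C(\beta).$$
These are standard for the Parisi PDE: they follow from the terminal condition $\Phi(1,x) = \log 2\cosh x$ (for which $|\partial_x|, |\partial_{xx}|, |\partial_{xxx}|$ are all bounded by $1$), together with maximum principle arguments applied to the PDEs obtained by differentiating \eqref{eq:parisi_pde} in $x$; bounds of this form are precisely the content of \cite[Lemmas 14--16]{jagannath2016dynamic}, which I would cite rather than reprove.

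\textbf{Bound on $\|\Phi_\mu-\Phi_\nu\|_\infty$.} Subtracting the Parisi PDEs for $\mu$ and $\nu$, the difference $D$ satisfies the backward linear equation
$$\partial_t D + \tfrac{1}{2}\beta^2 \partial_{xx} D + \tfrac{1}{2}\beta^2\mu(t)\bigl(\partial_x \Phi_\mu + \partial_x \Phi_\nu\bigr)\partial_x D = -\tfrac{1}{2}\beta^2\bigl(\mu(t)-\nu(t)\bigr)\bigl(\partial_x\Phi_\nu\bigr)^2,$$
with $D(1,x)=0$. Because the drift coefficient $\beta^2\mu(t)(\partial_x\Phi_\mu+\partial_x\Phi_\nu)$ is uniformly bounded by $2\beta^2$ on $[0,1]\times\mathbb{R}$ by the a priori bounds, a Feynman--Kac representation applies: there exists a diffusion $\{X_s^{t,x}\}$ such that
$$D(t,x) \;=\; \tfrac{1}{2}\beta^2\,\mathbb{E}\!\left[\int_t^1 \bigl(\mu(s)-\nu(s)\bigr)\bigl(\partial_x\Phi_\nu(s,X_s^{t,x})\bigr)^2\,ds\right].$$
Using $|\partial_x\Phi_\nu|\le 1$ and the fact that in this paper $\mu(s)$ denotes the CDF $\mu[0,s]$, we obtain directly $\|D\|_\infty \le \tfrac{1}{2}\beta^2 \, d(\mu,\nu)$.

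\textbf{Bounds on the derivatives.} For $E_1 := \partial_x D$ and $E_2 := \partial_{xx} D$ I would differentiate the difference equation above in $x$ once and twice, respectively. Each differentiation produces another backward linear parabolic equation with bounded drift and a source term of the form $(\mu(t)-\nu(t))\,\Psi(t,x)$, where $\Psi$ is a polynomial in $\partial_x\Phi_\nu,\partial_{xx}\Phi_\nu,\partial_{xxx}\Phi_\nu$ (and, for $E_2$, also coefficients multiplying $E_1$ that are bounded by the a priori estimates). A Feynman--Kac representation analogous to the one above, together with the uniform derivative bounds from the first step, yields $\|E_i\|_\infty \le C(\beta)\,d(\mu,\nu)$ for $i=1,2$, completing the proof.

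\textbf{Main obstacle.} The delicate point is the control of $\partial_{xxx}\Phi_\nu$ appearing in the source for the $E_2$ equation: one needs a uniform (in $\nu$) $L^\infty$ bound, which is not immediate from the statement of \eqref{eq:parisi_pde} since $\mu(t)$ is only measurable in $t$. This is handled by repeatedly differentiating the Parisi PDE in $x$ and invoking the maximum principle, exploiting the smoothness of $\log 2\cosh$; this is exactly the content of the higher-order regularity lemmas in \cite{jagannath2016dynamic}, which I would invoke directly. Once this bound is in hand, the Gronwall/Feynman--Kac machinery above goes through with no further subtlety.
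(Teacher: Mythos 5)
Your proposal is correct and follows essentially the same route as the paper: the first two bounds are exactly \cite[Lemma 14]{jagannath2016dynamic} (which the paper simply cites, while you sketch its Feynman--Kac proof), and for the second-derivative bound both arguments differentiate the difference equation twice and apply a Feynman--Kac/Duhamel representation, using the uniform bounds on $\partial_{xx}\Phi$ and $\partial_{xxx}\Phi$ from \cite[Theorem 4]{jagannath2016dynamic}. The only detail you gloss over is the zeroth-order term $\beta^2\mu(t)(u_{xx}+v_{xx})\,\partial_{xx}D$ produced by the second differentiation, which forces an exponential integrating factor in the representation; since $u_{xx},v_{xx}$ are uniformly bounded this factor is controlled, so nothing is lost.
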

Our next lemma establishes the stability of the AMP algorithm. 
\begin{lemma}\label{lem:amp_stable}
   Let $\mathbf{h} = (h_i), \tilde{\mathbf{h}}= (\tilde{h}_i)$ be two i.i.d. random vectors such that $\|h_i-\tilde{h}_i\|_\infty \le \varepsilon$ almost surely. Let the corresponding AMP iterates given by Algorithm~\eqref{alg:amp} be denoted as $\mathbf{m}^{k}$ and $\tilde{\mathbf{m}}^{k}$ respectively. Define $\Delta_k:= \frac{1}{n} \|\mathbf{m}^{k}- \tilde{\mathbf{m}}^{k}\|^2$. Then for any $k \in \mathbb N  \cup \{0\}$, almost surely,
    \begin{equation}\label{eq:amp_Stable}
        \lim\limits_{\varepsilon \rightarrow 0^+}\lim\limits_{n \rightarrow \infty} \Delta_k =0.
    \end{equation}
\end{lemma}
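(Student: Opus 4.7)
The plan is to prove~\eqref{eq:amp_Stable} by induction on $k$, deriving a deterministic recursive bound for $\Delta_k$ in terms of $\varepsilon^2$, $\Delta_{k-1}$, $\Delta_{k-2}$, and the analogous pre-activation quantity $\Delta_k^{(x)} := n^{-1}\|\mathbf{x}^k - \tilde{\mathbf{x}}^k\|^2$. The base case $k = 0$ is immediate, since Algorithm~\ref{alg:amp} initializes $\mathbf{m}^0 = \tilde{\mathbf{m}}^0 = \bzero$, so $\Delta_0 = 0$.

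For the inductive step, I would rely on three structural facts. First, the non-linearity $g(x) = \partial_x \Phi_{\mu^\star_{\tau_0,0}}(q,x)$ satisfies $\|g\|_\infty \le 1$ and is globally Lipschitz, and $\partial_{xx}\Phi_{\mu^\star_{\tau_0,0}}(q,\cdot)$ is bounded and Lipschitz; these are standard regularity properties of the Parisi PDE solution (see~\cite{jagannath2016dynamic,auffinger2015parisi}) and are consistent with the regularity exploited in Lemma~\ref{lem:parisi_pde_cont}. Second, under Assumption~\ref{assn:interaction_gaussian}, $\|\mathbf{G}_n\|$ is almost surely bounded by a deterministic constant $C$ for all large $n$. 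Third, since $\|g\|_\infty \le 1$, we have the uniform bound $\|\mathbf{m}^k\|_\infty, \|\tilde{\mathbf{m}}^k\|_\infty \le 1$, and hence $n^{-1}\|\tilde{\mathbf{m}}^k\|^2 \le 1$.

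The heart of the argument is the recursion obtained from~\eqref{eq:amp_iterate}:
\begin{align*}
\mathbf{x}^{k+1} - \tilde{\mathbf{x}}^{k+1} = \beta \mathbf{G}_n (\mathbf{m}^k - \tilde{\mathbf{m}}^k) - \beta^2 d_k (\mathbf{m}^{k-1} - \tilde{\mathbf{m}}^{k-1}) - \beta^2 (d_k - \tilde{d}_k)\tilde{\mathbf{m}}^{k-1} + (\mathbf{h} - \tilde{\mathbf{h}}).
\end{align*}
The Lipschitz property of $g$ applied coordinatewise yields $\Delta_{k+1} \le L_g^2 \Delta_{k+1}^{(x)}$. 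Expanding $\Delta_{k+1}^{(x)}$ via the triangle inequality and using the spectral bound on $\beta \mathbf{G}_n$, the uniform boundedness of $d_k$, the Lipschitzness of $\partial_{xx}\Phi$ which gives $(d_k - \tilde{d}_k)^2 \lesssim \Delta_k^{(x)}$, and the hypothesis $n^{-1}\|\mathbf{h} - \tilde{\mathbf{h}}\|^2 \le \varepsilon^2$, I obtain on the event $\{\|\mathbf{G}_n\| \le C\}$
\begin{align*}
\Delta_{k+1}^{(x)} \le C_1 \Delta_k + C_2 \Delta_{k-1} + C_3 \Delta_k^{(x)} + C_4 \varepsilon^2.
\end{align*}
Since this event holds almost surely for large $n$, sending first $n \to \infty$ and then $\varepsilon \to 0^+$ in the induction hypothesis propagates $\Delta_j \to 0$ and $\Delta_j^{(x)} \to 0$ through each level up to the fixed index $k$.

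The main obstacle is controlling the Onsager cross-term $(d_k - \tilde{d}_k)\tilde{\mathbf{m}}^{k-1}$, which multiplies a scalar average over coordinates by a full vector. This is handled by Cauchy-Schwarz combined with the uniform $\ell^\infty$ bound on $\tilde{\mathbf{m}}^{k-1}$ and the coordinatewise Lipschitz estimate on $\partial_{xx}\Phi$, which together give $n^{-1}\|(d_k - \tilde{d}_k)\tilde{\mathbf{m}}^{k-1}\|^2 \le (d_k - \tilde{d}_k)^2 \lesssim \Delta_k^{(x)}$. A secondary concern is ensuring that the Lipschitz constants for $g$ and $\partial_{xx}\Phi$, which depend on $\mu^\star_{\tau_0,0}$, are deterministic and finite; this is guaranteed by the standard regularity theory for Parisi PDEs at a fixed measure.
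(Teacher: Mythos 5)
There is a genuine gap: you have assumed that both AMP iterations are run with the \emph{same} nonlinearity $g(x)=\partial_x\Phi_{\mu^\star_{\tau_0,0}}(q,x)$ and the same Onsager function $\partial_{xx}\Phi_{\mu^\star_{\tau_0,0}}(q,\cdot)$, with the two trajectories differing only through the external fields $\mathbf{h}$ and $\tilde{\mathbf{h}}$. In Algorithm~\ref{alg:amp} the function $g$ is built from the Parisi minimizer, and the Parisi functional $P_{\tau_0,\btheta_0,0}(\mu)=\mathbb{E}[\Phi_\mu(0,\tau_0 T+H)]-\tfrac{\beta^2}{2}\int_0^1 t\mu(t)\,dt$ depends on the law of the external field. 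Perturbing $\mathbf{h}$ (in the application, replacing $(\tau_0,\btheta_0)$ by the pseudo-likelihood estimates) therefore changes the minimizer from $\mu^\star$ to some $\tilde\mu^\star$ and the truncation point from $q=\inf(\mathrm{supp}(\mu^\star))$ to $\tilde q$, so the second trajectory is driven by a \emph{different} nonlinearity $\tilde g(x)=\partial_x\Phi_{\tilde\mu^\star}(\tilde q,x)$ and a different Onsager correction. Your recursion for $\mathbf{x}^{k+1}-\tilde{\mathbf{x}}^{k+1}$ and your claim $(d_k-\tilde d_k)^2\lesssim\Delta_k^{(x)}$ are only valid in the fixed-nonlinearity setting and silently drop the terms coming from $g\ne\tilde g$.

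The paper's proof handles exactly this: it decomposes $\Delta_{k+1}\le \mathcal{T}_1+\mathcal{T}_2$ with $\mathcal{T}_2\lesssim\|g-\tilde g\|_\infty^2$, controls $\mathcal{T}_2$ by showing $|P(\mu)-\tilde P(\mu)|\le\varepsilon$ uniformly in $\mu$ (using $\|\partial_x\Phi_\mu\|_\infty\le 1$), invoking strict convexity of the Parisi functional to conclude $d(\mu^\star,\tilde\mu^\star)\le\delta(\varepsilon)\to 0$ and $|q-\tilde q|\le\delta$, and then applying the continuity estimates of Lemma~\ref{lem:parisi_pde_cont} together with boundedness of $\partial_t\partial_x\Phi_{\mu^\star}$. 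The same machinery is needed for the Onsager term, where $(d_k-\tilde d_k)^2\lesssim d(\mu^\star,\tilde\mu^\star)+|q-\tilde q|+\Gamma_k$ picks up additional contributions from comparing $\partial_{xx}\Phi_{\mu^\star}(q,\cdot)$ with $\partial_{xx}\Phi_{\tilde\mu^\star}(\tilde q,\cdot)$ — this is precisely why the paper proves the new second-derivative bound $\|\partial_{xx}\Phi_\mu-\partial_{xx}\Phi_\nu\|_\infty\lesssim d(\mu,\nu)$ in Lemma~\ref{lem:parisi_pde_cont}. Your Lipschitz-recursion skeleton for $\mathcal{T}_1$ (base case, induction on $k$, spectral bound on $\mathbf{G}_n$, $\ell^\infty$ bounds on $\mathbf{m}^k$ and $d_k$) matches the paper's, but without the measure-comparison step the argument does not establish the lemma as it is actually used.
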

\begin{proof}[Proof of Lemma~\ref{lem:amp_stable}]
The conclusion holds trivially for $k=0$. For $k \geq 1$ we proceed by induction. 
Recall the definition of Parisi PDE from~\eqref{eq:parisi_pde}. The Parisi functionals given magnetization $\bh,\tilde\bh$ are denoted by $P$, $\tilde P$ respectively, where
\begin{align*}
    P(\mu) = \mathbb{E}[\Phi_{\mu}(0,h)]- \frac{\beta^2}{2} \int_0^1 t \mu(t) dt,\qquad 
    \tilde P(\mu) = \mathbb{E}[\Phi_{\mu}(0,\tilde{h})]- \frac{\beta^2}{2} \int_0^1 t \mu(t) dt
\end{align*}
The above Parisi functionals have unique minima~\cite{auffinger2015parisi}, \cite{jagannath2016dynamic}, which will be denoted by $\mu^\star$ and $\tilde \mu^\star$ respectively. Define the functions that govern the AMP iterations by
\begin{equation}\label{eq:tilde_g_define}
    g(x) = \partial_x \Phi_{\mu^\star} (q,x), \qquad \tilde g(x) = \partial_x \Phi_{\tilde \mu^\star} (\tilde q,x),
\end{equation}
where $q=\inf (\rm{supp}(\mu^\star))$, $\tilde q= \inf (\rm{supp}(\tilde{\mu}^\star))$. Finally, following~\eqref{eq:amp_iterate}, define
\begin{equation}\label{eq:tilde_dk_define}
    d_k = \frac{1}{n} \sum_{i=1}^{n}\partial_{xx} \Phi_{\mu^\star}(q, x^k_i), \qquad \tilde d_k = \frac{1}{n} \sum_{i=1}^{n}\partial_{xx} \Phi_{\tilde \mu^\star}(\tilde q, \tilde x^k_i).
\end{equation}
Define $\Gamma_k:= \frac{1}{n} \| \bx^k - \tilde{\bx}^k\|^2$. 
Assume that the following statements hold simultaneously for $k \in \mathbb{N}$:
\begin{equation}\label{eq:induction}
    \lim\limits_{\varepsilon \rightarrow 0^+}\lim\limits_{n \rightarrow \infty} \Delta_k=0, \quad \lim\limits_{\varepsilon \rightarrow 0^+}\lim\limits_{n \rightarrow \infty} (d_k -\tilde d_k)^2=0, \quad \lim\limits_{\varepsilon \rightarrow 0^+}\lim\limits_{n \rightarrow \infty}  \Gamma_k=0.
\end{equation}
 Following Algorithm~\ref{alg:amp}, we obtain for $k+1$,
\begin{align}\label{eq:t1+t2}
   \Delta_{k+1}&= \frac{1}{n} \Big\|\mathbf{m}^{k+1}- \tilde{\mathbf{m}}^{k+1}\Big\|^2 \nonumber\\
   &= \frac{1}{n} \Big\| g(\bw^{k+1}+ \bh) - \tilde g( \tilde \bw^{k+1}+ \tilde \bh) \Big\|^2 \nonumber \\
    & \le\frac{2}{n} \|g(\bw^{k+1}+ \bh) -  g( \tilde \bw^{k+1}+ \tilde \bh) \|^2 + \frac{2}{n} \|g( \tilde \bw^{k+1}+ \tilde \bh) -  \tilde g( \tilde \bw^{k+1}+ \tilde \bh) \|^2 \nonumber \\
    & \le\frac{2}{n} \|g(\bw^{k+1}+ \bh) -  g( \tilde \bw^{k+1}+ \tilde \bh) \|^2 + 2 \|g-\tilde g\|^2_{\infty} \nonumber\\
    &=: \mathcal{T}_1 + \mathcal{T}_2.
\end{align}
We will bound the two terms above separately. To bound $\mathcal{T}_2$, note that for any measure $\mu$ on $[0,1]$, we have $|\partial_x \Phi_{\mu}(t,x)| \le 1$~\cite{auffinger2015parisi}. Therefore, we have for any measure $\mu$ on $[0,1]$,
\[
|P(\mu)-\tilde P(\mu)| \le \| H - \tilde{H}\|_{\infty} \le \varepsilon.
\]
As the functions $\mu \mapsto P(\cdot)$ and $\mu \mapsto \tilde{P}(\cdot)$ are continuous and strictly convex, there exists $\delta= \delta(\varepsilon)>0$ such that $d(\mu^\star,\tilde \mu^\star) \le \delta$, where $\delta \rightarrow 0$ as $\varepsilon \rightarrow 0$. By definition, we also have $|q-\tilde q| \le \delta$. Therefore, 
\begin{align*}
    \mathcal{T}_2 & \lesssim \sup_{x} \Big|\partial_x \Phi_{\mu^\star} (q,x)- \partial_x \Phi_{\tilde \mu^\star} (\tilde q,x) \Big| \\
    &\lesssim \sup_{x} \Big|\partial_x \Phi_{\mu^\star} (q,x)- \partial_x \Phi_{ \mu^\star} (\tilde q,x) \Big| + \sup_{x} \Big|\partial_x \Phi_{\mu^\star} (\tilde q,x)- \partial_x \Phi_{\tilde \mu^\star} (\tilde q,x) \Big| \\
    & \lesssim \|\partial_t \partial_x \Phi_{\mu^{\star}}\|_{\infty} |q-\tilde q|+ d(\mu^\star,\tilde{\mu}^{\star}) \lesssim \delta,
\end{align*}
where the third inequality uses~\eqref{eq:pde_continuity} and the final inequality uses the fact $\|\partial_t \partial_x \Phi_{\mu^\star}\|_{\infty}$ is bounded, which follows from~\cite[Theorem 4]{jagannath2016dynamic}. This provides the necessary upper bound on $\mathcal{T}_2$. Turning to the bound on $\mathcal{T}_1$, note that
\begin{equation}\label{eq:del_xx}
    \|\partial_x g(x)\|_{\infty}= \|\partial_{xx} \Phi_{\mu^\star}(q,x)\|_{\infty} \le 1
\end{equation}
by~\cite[Proposition 2]{jagannath2016dynamic}. Therefore,
\begin{align}\label{gamma_recursion}
    \mathcal{T}_1 & \le \frac{2}{n} \|\bx^{k+1}-\tilde\bx^{k+1}\|^2 = 2 \Gamma_{k+1} \nonumber \\
    &\le \frac{2}{n} \|(\bw^{k+1}+ \bh) - (\tilde \bw^{k+1}+ \tilde \bh)\|^2 \nonumber \\ 
    &\le \frac{4}{n} \| \bw^{k+1}-\tilde\bw^{k+1}\|^2+ 4 \|\bh -\tilde \bh\|^2_{\infty} \nonumber \\
    & \le \frac{8\beta^2}{n} \|\mathbf{G}_n(\bm^{k}- \tilde \bm^{k})\|^2+ \frac{8\beta^4}{n} \|\bm^{k} d_k - \tilde \bm^{k} \tilde d_k\|^2 +4 \varepsilon \nonumber \\
    & \lesssim \frac{1}{n}\Big\|\bm^{k} - \tilde \bm^{k}\Big\|^2 + \frac{1}{n}\|\bm^{k}\|^2 (d_k - \tilde d_k)^2+ \frac{1}{n} \Big\|\bm^{k} - \tilde \bm^{k}\Big\|^2 \tilde d^2_k + \varepsilon,
\end{align}
where we have used $\|\mathbf{G}_n\| \leq 3$ almost surely. 
We can bound the above display as follows: by~\cite[Proposition 2]{jagannath2016dynamic}, we have $\|\partial_x \Phi_{\mu^\star}\|_{\infty} \le 1$ and thus $\|\bm^{k}\|^2 \le n$. Using~\eqref{eq:del_xx}, we have $\tilde d^k \le 1$. Therefore, we obtain
\begin{align}\label{eq:t1_Step_1}
    \mathcal{T}_1 &\lesssim \frac{1}{n}\Big\|\bm^{k} - \tilde \bm^{k}\Big\|^2 + (d_k - \tilde d_k)^2+ \varepsilon.
\end{align}
To bound $(d_k - \tilde d_k)^2$,  note that
\begin{align}
    (d_k - \tilde d_k)^2 &\le  \left(\frac{1}{n} \sum_{i=1}^{n}( \partial_{xx} \Phi_{\mu^\star}(q, x^k_i)- \partial_{xx} \Phi_{\tilde \mu^\star}(\tilde q, \tilde x^k_i))\right)^2 \nonumber\\
    & \le \frac{1}{n} \sum_{i=1}^n( \partial_{xx} \Phi_{\mu^\star}(q, x^k_i)- \partial_{xx} \Phi_{\tilde \mu^\star}(\tilde q, \tilde x^k_i))^2 \nonumber\\
    & \le \frac{2}{n} \sum_{i=1}^n( \partial_{xx} \Phi_{\mu^\star}(q, x^k_i)- \partial_{xx} \Phi_{\tilde \mu^\star}(q, x^k_i))^2+ \frac{2}{n} \sum_{i=1}^n( \partial_{xx} \Phi_{\tilde \mu^\star}(q, x^k_i)- \partial_{xx} \Phi_{\tilde{\mu}^\star}(\tilde q, \tilde x^k_i))^2 \nonumber\\
    & \lesssim d(\mu^\star, \tilde\mu^\star) + \frac{2}{n} \sum_{i=1}^n( \partial_{xx} \Phi_{\tilde \mu^\star}(q, x^k_i)- \partial_{xx} \Phi_{\tilde{\mu}^\star}(\tilde q, \tilde x^k_i))^2 \nonumber\\
    &\lesssim d(\mu^\star, \tilde\mu^\star)+ |q-\tilde q| + \frac{1}{n}\|\bx^k- \tilde \bx^k\|^2 \lesssim \delta + \Gamma_k, \label{eq:dk_recursion}
\end{align}
where the fourth inequality uses~\eqref{eq:pde_continuity} and the fifth inequality uses~\cite[Theorem 4]{jagannath2016dynamic}. Since~\eqref{eq:induction} holds for $k$, we get by~\eqref{gamma_recursion}
\begin{equation}\label{eq:gamma_induction}
    \Gamma_{k+1} \lesssim \Delta_{k} + (d_k - \tilde d_k)^2 + \varepsilon,
\end{equation}
which converges to $0$ as $n\rightarrow \infty$ followed by $\varepsilon \rightarrow 0+$. This, along with~\eqref{eq:dk_recursion} implies 
\begin{equation}\label{eq:dk_induction}
    (d_{k+1} - \tilde d_{k+1})^2 \lesssim \delta + \Gamma_{k+1}
\end{equation}
 converges to $0$ as $n\rightarrow \infty$ followed by $\varepsilon \rightarrow 0+$. Finally,~\eqref{eq:t1+t2} implies that
 \begin{equation}\label{eq:delta_induction}
     \Delta_{k+1} \lesssim \delta+(d_k - \tilde d_k)^2 + \Delta_k+  \varepsilon,
 \end{equation}
 which also converges to $0$ as $n\rightarrow \infty$ followed by $\varepsilon \rightarrow 0+$. Therefore \eqref{eq:gamma_induction}, \eqref{eq:dk_induction} and \eqref{eq:delta_induction} prove the induction hypothesis~\eqref{eq:induction} for $(k+1)$. This completes the proof of the Lemma.
\end{proof}

Now, we provide the proof of Lemma~\ref{lem:parisi_pde_cont}.

\begin{proof}[\textbf{Proof of Lemma~\eqref{lem:parisi_pde_cont}}]
    Using \cite[Lemma 14]{jagannath2016dynamic} we immediately obtain that $\max\left\{\|\Phi_{\mu}-\Phi_{\nu}\|_{\infty},\|\partial_x\Phi_{\mu}-\partial_x\Phi_{\nu}\|_{\infty}\right\} \le C d(\mu, \nu)$. Hence we only need to prove $\|\partial_{xx}\Phi_{\mu}-\partial_{xx}\Phi_{\nu}\|_{\infty}\le C d(\mu, \nu)$. Let $u:= \Phi_\mu$, $v:= \Phi_{\nu}$ be weak solutions to the Parisi PDE. Then $w= u-v$ is a weak solution to the following PDE:
\begin{align}\label{eq:w_pde}
    &w_t+ \frac{\beta^2}{2} \left(w_{xx}+ \mu[0,t] (u_x+v_x) w_x+ (\mu[0,t]-\nu[0,t]) v^2_x\right) =0, \quad (t,x) \in (0,1)\times \mathbb{R}, \nonumber \\
    &w(1,x)=0.
\end{align}
We denote partial derivative w.r.t. $t,x$ by subscripts respectively. We can write down the expression for $w, w_x, w_{xx}$ by solving the following SDE:
\begin{equation*}
    dX_t= \beta^2 \mu[0,t] \frac{u_x+v_x}{2}(t,X_t) dt + \beta dW_t,
\end{equation*}
where $W_t$ is  standard Brownian motion. Note that this SDE has a strong solution as $u_x, v_x$ are Lipschitz in $x$ uniformly in $t$; additionally, $u_x, v_x$ are also bounded in $t$. 
Differentiating~\eqref{eq:w_pde}, we obtain by continuity of $w$
\begin{equation*}
    w_{tx}+ \frac{\beta^2}{2} \left(w_{xxx}+  \mu[0,t] (u_x+v_x) w_{xx} +\mu[0,t] (u_{xx}+v_{xx}) w_{x} + 2(\mu[0,t]-\nu[0,t]) v_x v_{xx} \right)=0
\end{equation*}
Using the shorthand notation $\alpha= w_{xx}$, we have by differentiating the above display w.r.t. $x$ to obtain
\begin{align}\label{eq:w_xx}
    \alpha_t&+ \frac{\beta^2}{2} \Bigg( \alpha_{xx}+ \mu[0,t] (u_x+v_x) \alpha_x  +2 \mu[0,t] (u_{xx} +v_{xx}) \alpha \nonumber \\
    &+ \mu[0,t] (u_{xxx}+v_{xxx})w_x+ 2(\mu[0,t]-\nu[0,t]) (v_x v_{xxx}+ v^2_{xx})\Bigg)=0
\end{align}
Therefore, by an application of~\cite[Proposition 22]{jagannath2016dynamic}, we have $\alpha$ has the following representation
\begin{equation}\label{eq:alpha_closed}
    \alpha= \mathbb{E}\left[\frac{\beta^2}{2}\int\limits_{t}^1 I(t,s) \left\{\mu[0,s] (u_{xxx}+v_{xxx})w_x+ 2(\mu[0,s]-\nu[0,s]) (v_x v_{xxx}+ v^2_{xx}) \right\}(s,X_s) ds\bigg| X_t=x \right].
\end{equation}
Here, $I(t,s)$ has the closed-form expression
\begin{equation*}
    I(t,s)= \exp\left(\int_{t}^{s} \beta^2 \mu[0,\tau] (u_{xx}+v_{xx})(\tau,X_{\tau}) d\tau\right)
\end{equation*}
Since $u_{xx}, v_{xx}$ are uniformly bounded, we have $|I(t,s)| \le C$ for some $C>0$. Similarly $u_{xxx}, v_{xxx}$ are uniformly bounded by~\cite[Theorem 4]{jagannath2016dynamic}. Hence we obtain from~\eqref{eq:alpha_closed} that
\begin{equation*}
    \|u_{xx}-v_{xx}\|_{\infty} = \|\alpha\|_{\infty} \lesssim \|w_x\|_{\infty} + d(\mu,\nu).
\end{equation*}
Since we have already established that $\|w_x\|_{\infty} \lesssim d(\mu,\nu)$, we get the desired conclusion.
\end{proof}

\subsection*{Discretizing the covariate support:} Next, we demonstrate how Algorithm~\ref{algo:de_method} and the corresponding Theorem~\ref{thm:de_consistency} change when the variables $\bX_i$ have general compact support instead of finite support. Specifically, we modify Step 1 of Algorithm~\ref{algo:de_method} as follows:
\begin{enumerate}
    \item Fix $m \in \mathbb{N}$. Generate $\bar \bT= (\bar T_1,\ldots, \bar T_n)$ from the uniform probability distribution on $\{\pm 1\}^n$. Generate $\bar \bX  = (\bar \bX_1,\cdots, \bar \bX_n) $ i.i.d. $\mathbb{P}_X$ independent of $\bar \bT$. Define $\mathcal{H}_m= \{-1, -1+\frac{1}{2^m}, \ldots, 1-\frac{1}{2^m},1 \}^d$. Set $\tilde \bX_i$ as the closest point of $\bar \bX_i$ in the set $\mathcal{H}_m$.
\end{enumerate}
We then proceed with the remaining steps of Algorithm~\ref{algo:de_method}, replacing $\bar \bX_i$ with $\tilde \bX_i$. Denote the resulting estimators $\widetilde {\mathrm{DE}}_{(r,\delta)}$ and $\widetilde {\mathrm{IE}}_{(r,\delta)}$ respectively. We have the following consequence.

\begin{lemma}\label{lem:general_h}
Consider the setup of Theorem~\ref{thm:de_consistency}. Fix $\varepsilon >0$. Then there exists $\delta :=\delta(\varepsilon) >0$ and $m:= m(\varepsilon) \in \mathbb{N}$ such that the estimates $\widetilde{\mathrm{DE}}_{(r,\delta)}$ and $\widetilde{\mathrm{IE}}_{(r,\delta)}$ satisfy
    \begin{align}
        \Big| \ee_{\bar\bT,\tilde\bX}[\widetilde{\mathrm{DE}}_{(r,\delta)}] - \mathrm{DE} \Big| <\varepsilon , \qquad 
        \Big| \ee_{\bar\bT,\tilde\bX}[\widetilde{\mathrm{IE}}_{(r,\delta)}] - \mathrm{IE} \Big| <\varepsilon.\,\,\,  \nonumber 
    \end{align}
\end{lemma}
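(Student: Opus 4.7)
The plan is to decompose the total error via the triangle inequality into a discretization error and the error from applying Algorithm~\ref{algo:de_method} to the discretized problem. Let $\mathrm{DE}^{\mathrm{disc}}$ and $\mathrm{IE}^{\mathrm{disc}}$ denote the direct and indirect effects of the outcome model \eqref{eq:define_gibbs} when the covariates are drawn i.i.d.\ from the push-forward of $\mathbb{P}_X$ under the nearest-point projection onto $\mathcal{H}_m$, rather than from $\mathbb{P}_X$ itself. Then
\[
\Big|\ee_{\bar\bT,\tilde\bX}[\widetilde{\mathrm{DE}}_{(r,\delta)}] - \mathrm{DE}\Big| \le \Big|\ee_{\bar\bT,\tilde\bX}[\widetilde{\mathrm{DE}}_{(r,\delta)}] - \mathrm{DE}^{\mathrm{disc}}\Big| + \Big|\mathrm{DE}^{\mathrm{disc}} - \mathrm{DE}\Big|,
\]
and analogously for the indirect effect. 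I would choose $m = m(\varepsilon) \in \mathbb{N}$ and $\delta = \delta(\varepsilon) > 0$ so that each summand is strictly less than $\varepsilon/2$.

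For the discretization term $|\mathrm{DE}^{\mathrm{disc}} - \mathrm{DE}|$, observe that the two outcome models share the same interaction matrix $\bA_n$ and differ only through the external field: $\bh_i = \tau_0 \bar T_i + \btheta_0^\top \bar\bX_i + \gamma$ versus $\tilde\bh_i = \tau_0 \bar T_i + \btheta_0^\top \tilde\bX_i + \gamma$, with $\|\bh - \tilde\bh\|_\infty \le \|\btheta_0\|_1 \cdot \max_i \|\bar\bX_i - \tilde\bX_i\|_\infty \le M_0 d / 2^{m+1}$ by construction. Applying Lemma~\ref{lem:graph_approx_zn} with $\bA_n = \mathbf{B}_n$ yields $|F_n^{\bA_n}(\bh) - F_n^{\bA_n}(\tilde\bh)| \le M_0 d / 2^{m+1}$ uniformly in $\tau$ and $\gamma$. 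Now the three-point convexity argument used in the proof of Lemma~\ref{lem:de_ie_close}---which uses that the second derivatives of $F_n$ in $\tau$ and in $\gamma$ are uniformly bounded since outcomes lie in $[-1,1]$---converts this uniform bound on expected log-partition functions into bounds of order $\sqrt{M_0 d / 2^{m+1}}$ on both $|\mathrm{DE}^{\mathrm{disc}} - \mathrm{DE}|$ and $|\mathrm{IE}^{\mathrm{disc}} - \mathrm{IE}|$. Choosing $m \gtrsim \log_2(M_0 d / \varepsilon^2)$ makes these smaller than $\varepsilon/2$.

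For the remaining algorithmic error, note that by construction $\tilde\bX_i$ is supported on the finite set $\mathcal{H}_m$, so Algorithm~\ref{algo:de_method} run with the modified covariates $\tilde\bX$ is exactly the finitely-supported version analyzed in Theorem~\ref{thm:de_consistency}, with $\bA_n$ still satisfying Assumption~\ref{assn:interaction}. Applying that theorem to the discretized model with tolerance $\varepsilon/2$ produces the required $\delta = \delta(\varepsilon)$ guaranteeing $|\ee_{\bar\bT, \tilde\bX}[\widetilde{\mathrm{DE}}_{(r,\delta)}] - \mathrm{DE}^{\mathrm{disc}}| < \varepsilon/2$ and the analogous indirect-effect bound. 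Combining the two steps completes the proof. The only nontrivial point is the conversion of an $\ell_\infty$ field perturbation into a bound on the derivative-valued treatment effects; this is handled precisely by the convexity-plus-finite-difference argument from Lemma~\ref{lem:de_ie_close}, at the mild cost of a logarithmic discretization parameter $m$ that enters the polynomial runtime of Theorem~\ref{thm:de_consistency} only through the support size $|\mathcal{H}_m| = (2^{m+1}+1)^d$.
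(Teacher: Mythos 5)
Your proposal is correct and follows the same high-level decomposition as the paper: triangle inequality into a discretization error plus the algorithmic error, with the latter handled exactly as you do, by applying Theorem~\ref{thm:de_consistency} to the finitely-supported discretized model with tolerance $\varepsilon/2$. Where you genuinely diverge is in bounding the discretization term. The paper does not argue through the external field at all: it invokes a Wasserstein-stability result from prior work (\cite[Theorem D.1]{bhattacharya2024causal}), which gives $|\mathrm{DE}(\mathbb{P}_X)-\mathrm{DE}(\tilde{\mathbb{P}}_X)|\le C\sqrt{d_{W_2}(\mathbb{P}_X,\tilde{\mathbb{P}}_X)}$, and then bounds $d_{W_2}(\mathbb{P}_X,\tilde{\mathbb{P}}_X)\le \sqrt{d}/2^m$ via the projection coupling. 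You instead give a self-contained argument entirely inside this paper's toolbox: couple $\bar\bX_i$ with $\tilde\bX_i=P_{\mathcal{H}_m}(\bar\bX_i)$, observe the two Gibbs measures differ only through an $\ell_\infty$-small external field perturbation $\|\bh-\tilde\bh\|_\infty\le \|\btheta_0\|_1\,2^{-(m+1)}$, apply Lemma~\ref{lem:graph_approx_zn} to get uniform closeness of the (expected) log-partition functions in $\tau$ and $\gamma$, and then run the convexity-plus-finite-difference argument of Lemma~\ref{lem:de_ie_close} to convert this into an $O(2^{-m/2})$ bound on the derivative-valued estimands. This is essentially a re-derivation of the special case of the cited external theorem that is actually needed, and it buys self-containedness at the cost of slightly more work; the paper's route is shorter but leans on an external reference. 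One caveat: your step asserting that the second derivatives of $F_n$ in $\tau$ and $\gamma$ are uniformly bounded inherits exactly the same justification (and the same level of rigor) as the paper's own proof of Lemma~\ref{lem:de_ie_close}, so it introduces no new gap relative to what the paper already accepts.
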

\begin{proof}
Let $\mathrm{DE}(\mathbb{P})$ and $\mathrm{IE}(\mathbb{P})$ denote the direct and indirect causal effect respectively under the covariate distribution $\mathbb{P}$. Define the distribution of $\tilde \bX_i$ by $\tilde{\mathbb{P}}_X$. We state the following result~\cite[Theorem D.1]{bhattacharya2024causal}: There
exists $C:=C(d, \|\boldsymbol{\theta}_0\|)>0$ such that 
\begin{align}
    |\mathrm{DE}(\mathbb{P}_X) - \mathrm{DE}(\tilde{\mathbb{P}}_X)| \leq C \sqrt{d_{W_2}(\mathbb{P}_X, \tilde{\mathbb{P}}_X)}, \,\,\,\, |\mathrm{IE}(\mathbb{P}_X) - \mathrm{IE}(\tilde{\mathbb{P}}_X)| \leq C \sqrt{d_{W_2}(\mathbb{P}_X, \tilde{\mathbb{P}}_X)}, \nonumber 
\end{align}
where $d_{W_2}$ denotes the $2$-Wasserstein distance between two probability distributions. Therefore, we need an upper bound of $d_{W_2}(\mathbb{P}_X, \tilde{\mathbb{P}}_X)$. For a set $K$, define $P_K$ to be the projection onto $K$. Hence $P_{\mathcal{H}_m}(\bar \bX_i)= \tilde \bX_i$. Note that, $\sup_{\bx \in [-1,1]}\| \bx - P_{\mathcal{H}_m}(\bx)\| \le \frac{\sqrt{d}}{2^m}$. Since $(\bX,\tilde \bX)$ is a coupling of $\mathbb{P}_X$ and $\tilde{\mathbb{P}}_X$, we have
\[d_{W_2}(\mathbb{P}_X, \tilde{\mathbb{P}}_X) \le \frac{\sqrt{d}}{2^m}.\]
Hence, there exists $m$ such that 
\[ |\mathrm{DE}(\mathbb{P}_X) - \mathrm{DE}(\tilde{\mathbb{P}}_X)| \le \frac{\varepsilon}{2}, \qquad |\mathrm{IE}(\mathbb{P}_X) - \mathrm{IE}(\tilde{\mathbb{P}}_X)| \le \frac{\varepsilon}{2}.\]
Further, using Theorem~\ref{thm:de_consistency}, we have for $\delta$ small enough,
\begin{equation*}
     \Big| \ee_{\bar\bT,\tilde\bX}[\widetilde{\mathrm{DE}}_{(r,\delta)}] - \mathrm{DE}(\tilde{\mathbb{P}}_X) \Big| <\frac{\varepsilon}{2}, \qquad 
        \Big| \ee_{\bar\bT,\tilde\bX}[\widetilde{\mathrm{IE}}_{(r,\delta)}] - \mathrm{IE}(\tilde{\mathbb{P}}_X) \Big| <\frac{\varepsilon}{2}.
\end{equation*}
Combining the above two displays, we obtain the desired conclusion.
\end{proof}

\subsection*{Proof of Lemma~\ref{lem:block_approx}}

We need the following definition to prove the result.
\begin{definition}\label{def:r-reg-define}
    An $n \times n$ matrix $\mathbf{B}$ is called $r$-regular for some $r \in \mathbb{N}$ if $\max_{i,j}|\mathbf{B}_{i,j}| \le 1$ and there exists sets $Q_1,\ldots, Q_r,P_1,\ldots, P_r \subseteq [n]$ and $c_1,\ldots, c_r \in \mathbb R$ such that $\mathbf{B}= \sum_{k=1}^{r} c_k \mathbf{1}_{Q_k}\mathbf{1}^\top_{P_k}$. 
\end{definition}
Fix $\varepsilon >0$. Since the interaction matrix $\bA_n$ satisfies $\max_{i,j} |n \bA_n(i,j)|\le 1$, we can use \cite[Theorem 2.1]{fox2019fast} to find $r= r(\varepsilon)>0$ and an $r$-regular matrix $\widetilde{\bA}_n$ such that $\|n\bA_n-n \widetilde{\bA}_n\| \le \varepsilon n$. Moreover, the matrix $\widetilde{\bA}_n$ can be computed in $\varepsilon^{-O(1)}n^2$ time.

Following Definition~\ref{def:r-reg-define}, we denote $\widetilde{\bA}_n= \sum_{k=1}^{r} c_k \mathbf{1}_{Q_k}\mathbf{1}^\top_{P_k}$. Next, we obtain a partition of vertices by finding common refinement of all $Q_k,P_l$'s in time $O(nr)$. This is achieved by going through the vertices of $\widetilde{\bA}_n$, and checking, for each vertex, which parts it does and does not belong to. The vertex partition has size at most $2^r$. With an abuse of notation, we call such refinement also as $\widetilde{\bA}_n$ and assume the partition size $2^r$. Call the partition as $\{U_1,\ldots U_{2^r}\}$. Hence, we have,
\begin{equation}\label{eq:final_block}
    \widetilde{\bA}_n= \sum_{k,l=1}^{2^r} c_{kl} \mathbf{1}_{U_k}\mathbf{1}^\top_{U_l}
\end{equation}
By definition, we have $\|\bA_n-\widetilde{\bA}_n\|= O( \varepsilon n)$. This completes the proof of the Lemma.

\subsection*{Proof of Lemma~\ref{lem:gibbs_simplify}}
We begin by noting that using~\eqref{eq:define_part_sum_1}, we have 
\begin{equation}\label{eq:u_sum}
    \sum_{\ell \in U_k} y_\ell= \sum_{a=1}^m \Big(V_{a,k,+}+V_{a,k,-}\Big).
\end{equation}
Recall that $[n]= \cup_{k=0}^{2^r} U_{k}$. Therefore, using \eqref{eq:u_sum}, 
\begin{align}\label{eq:t,x_sum}
    \mathbf{y}^\top (\tau_0 \mathbf{t}+ \mathbf{x} \btheta_0) &=  \sum_{k=0}^{2^r} \sum_{\ell \in U_k} y_\ell (\tau_0 t_\ell+ \bx^\top_\ell \btheta_0)=  \sum_{a=1}^m \sum_{k=0}^{2^r} \sum_{\ell \in S_k \cap U_k} y_\ell (\tau_0 t_\ell+ h_a^\top \btheta_0) \nonumber \\
    &= \sum_{a=1}^m \sum_{k=0}^{2^r} \left( V_{a,k,+} (\tau_0+ h^\top_a \btheta_0)+ V_{a,k,-} ( - \tau_0+ h^\top_a \btheta_0)\right),
\end{align}
since $\bx_i= h_a$ if $i \in S_a$. Combining \eqref{eq:u_sum} and \eqref{eq:t,x_sum}, the Hamiltonian corresponding to the Gibbs measure~\eqref{eq:define_gibbs} simplifies as 
\begin{align}
    &\frac{1}{2}\,\mathbf{y}^\top \widetilde{\bA}_n \mathbf{y}+ \mathbf{y}^\top (\tau_0 \mathbf{t}+ \mathbf{x} \btheta_0) =\sum_{k,l=1}^{2^r} c_{kl} \by^\top\mathbf{1}_{U_k}\mathbf{1}^\top_{U_l} \by+ \mathbf{y}^\top (\tau_0 \mathbf{t}+ \mathbf{x} \btheta_0) \nonumber\\
    &= \sum_{k,l=1}^{2^r} c_{kl} \Big(\sum_{i \in U_k }y_k\Big)\Big(\sum_{j \in U_l }y_j\Big)+ \sum_{k=0}^{2^r} \sum_{\ell \in U_k} y_\ell (\tau_0 t_\ell+ \bx^\top_\ell \btheta_0) \nonumber\\
    &= \sum_{k,l=1}^{2^r} c_{kl} \Big(\sum_{a=1}^m\Big(V_{a,k,+}+V_{a,k,-}\Big)\Big)\Big(\sum_{a=1}^m\Big(V_{a,k,+}+V_{a,k,-}\Big)\Big) \nonumber\\
    &+\sum_{a=1}^m \sum_{k=0}^{2^r} \left( V_{a,k,+} (\tau_0+ h^\top_a \btheta_0)+ V_{a,k,-} ( - \tau_0+ h^\top_a \btheta_0)\right).\label{eq:gibbs_simplify}
\end{align}

This implies, the conditional distribution \eqref{eq:define_gibbs} can be written as :
\begin{align*}
     &f_{(r,\varepsilon)}(V_{a,k,+} = v_{a,k,+}, V_{a,k,-} = v_{a,k,-}, a\in[m], 0\leq k \leq 2^r| \overline{\bT}, \overline{\bX}) \nonumber \\ 
     &\propto \prod_{a=1}^{m} \prod_{k=1}^{2^r} \binom{| \mathcal{A}_{a,k,+}|}{\frac{| \mathcal{A}_{a,k,+}|+ v_{a,k,+}}{2}} \binom{| \mathcal{A}_{a,k,-}|}{\frac{| \mathcal{A}_{a,k,-}|+ v_{a,k,-}}{2}} \times \nonumber\\
     &\exp \Bigg( \sum_{k,l=1}^{2^r} c_{kl} \Big(\sum_{a=1}^m\Big(V_{a,k,+}+V_{a,k,-}\Big)\Big)\Big(\sum_{a=1}^m\Big(V_{a,k,+}+V_{a,k,-}\Big)\Big)  \nonumber\\
    &+\sum_{a=1}^m \sum_{k=0}^{2^r} \left( V_{a,k,+} (\tau_0+ h^\top_a \btheta_0)+ V_{a,k,-} ( - \tau_0+ h^\top_a \btheta_0)\right) \Bigg).
\end{align*}
This simplification above implies the Gibbs measure \eqref{eq:define_gibbs} is a probability measure on the real numbers $V_{a,k,+}, V_{a,k-}$, $a \in [m]$, $k \in 0 \cup [2^r]$. Note that this reduces the number of indices from $n$ to $2m(2^r+1)$ which does not grow with the number of vertices. Therefore it is possible to sample $V_{a,k,+}, V_{a,k-}$'s from \eqref{eq:define_gibbs} exactly as long as $\widetilde{\bA}_n$ is a block matrix. Note that, the normalizing constant of $f$ can be computed in $O(n^{2m(2^r+1)})$ time.

\end{document}